\newcommand\cyr
\renewcommand\rmdefault{wncyr} \renewcommand\sfdefault{wncyss} \renewcommand\encodingdefault{OT2} \normalfont
\DeclareTextFontCommand{\textcyr}{\cyr}    
   \def\@settitle
\@title \end{center}
\numberwithin{equation}{section}
\def\CC{{\mathbb C}}
\def\GG{{\mathbb G}} 
\def\HH{{\mathbb H}}
\def\PP{{\mathbb P}}
\def\QQ{{\mathbb Q}} 
\def\RR{{\mathbb R}}
\def\ZZ{{\mathbb Z}}
\def\sllie{\mathfrak{sl}}
\def\G{\Gamma}
\def\Ms{\mathscr{M}}
\def\pure{\mathrm{pure}} 
\def\ssm{\smallsetminus}
\newcommand{\p}{\partial}
\def\Ccal{{\mathcal C}}
\def\Ncal{{\mathcal N}}
\def\Ocal{{\mathcal O}}
\def\Ss{{\mathscr S}}
\def\Vs{{\mathscr V}}
\def\Xcal{{\mathcal X}} 
\def\Zcal{{\mathcal Z}}
\def\half{{\tfrac{1}{2}}}
\def\End{\operatorname{End}}
\def\pt{{\scriptscriptstyle\bullet}}
\newcommand\per{\mathscr{D}}
\newcommand\Per{\mathscr{P}}
\newcommand\Uscr{\mathscr{U}}
\newcommand\aut{\operatorname{Aut}}
\newcommand\diff{\operatorname{Diff}}
\newcommand\Gr{\operatorname{Gr}}
\newcommand\HK{\mathrm{H\!K}}
\newcommand\Hom{\operatorname{Hom}}
\newcommand\res{\operatorname{Res}}
\newcommand\SOs{\operatorname{\mathscr{SO}}}
\newcommand\Ker{\operatorname{Ker}}
\newcommand\Mod{\operatorname{Mod}}
\newcommand\GL{\operatorname{GL}}
\newcommand\SL{\operatorname{SL}}
\newcommand\Orth{\operatorname{O}}
\newcommand\SO{\operatorname{SO}}
\newcommand\so{\operatorname{\mathfrak{so}}}
\newcommand\Un{\operatorname{U}}
\newcommand\spin{\operatorname{Spin}}
\newcommand\cspins{\mathscr{CS}\mathit{pin}}
\newcommand\spins{\mathscr{S}pin}
\newcommand\teich{\mathscr{T}}
\newcommand\Teich{\operatorname{Teich}}
\newtheorem{theorem}{Theorem}[section]
\newtheorem{lemma}[theorem]{Lemma}
\newtheorem{proposition}[theorem]{Proposition}
\newtheorem{corollary}[theorem]{Corollary}
\theoremstyle{definition}
\theoremstyle{remark} 
\newtheorem{remark}[theorem]{Remark}
\newtheorem{question}[theorem]{Question}
\newtheorem{problem}[theorem]{Problem}
\begin{document}
%%%%%%%%%%%%%%%%%%%%%%%%%%%%%%%%%%%%%%%%%%%%%%%%%
\author{Eduard Looijenga}
\thanks{Supported by the Chinese National Science Foundation}
\address{Yau Mathematical Sciences Center, Tsinghua University, Beijing (China) and Mathematisch Instituut, Universiteit Utrecht (Nederland)}
\email{e.j.n.looijenga@uu.nl}

\title{Teichm\"uller spaces and Torelli theorems for hyperk\"ahler manifolds}
\begin{abstract}
Kreck and Yang Su recently gave counterexamples to a version of the Torelli theorem for hyperk\"ahlerian manifolds
as stated by Verbitsky. We extract the correct statement and give a  short proof of it.
We also revisit a few of its consequences, some of which are given new (shorter) proofs, and ask some questions.
\end{abstract}

\maketitle
\hfill{\emph{To Shing-Tung Yau,  on the occasion of his 70th birth year.}}

\section*{Introduction}
Kreck and Yang Su \cite{kreck-su} recently noticed that the Torelli theorem as stated by Verbitsky in \cite{verbitsky} cannot hold. This led Verbitsky to post an erratum \cite{verbitsky_a} which purports to resolve the issue. Since many subsequent papers have used his theorem, we thought it worthwhile to  offer, what we hope  is,
a complete account, which starts out from the basics. This led us to set up things a little differently than in the primary sources, as this turns out to have the merit of giving shorter, more transparent  proofs. Among this is our definition of the  Teichm\"uller space $\teich$ of \emph{hyperk\"ahlerian} complex manifold structures given up to
isotopy  on a fixed compact manifold $M$ and  its separated quotient ($\teich$ is almost never separated). This should be distinguished from the 
Teichm\"uller space $\teich_{\HK}$ of \emph{hyperk\"ahler} structures, which is always separated and helps to understand the former. 
We found it also worthwhile to introduce the Teichm\"uller space $\teich_\HH$ of Einstein metrics on $M$, as  some properties of interest here are at the end of the  day  properties of  that space. This  also made it relatively straightforward to construct universal families over the Teichm\"uller spaces in question, thereby  recovering  a recent theorem of Markman \cite{markman:family}. Apart from  this, we believe that  what distinguishes this paper from other accounts are:  the more prominent role of the twistor families,  our Proposition \ref{prop:separation} (which is a key to our  definition of the Teichm\"uller spaces), and the half page proof of the Torelli theorem that is essentially Lemma \ref{lemma:main}.

We here treat a twistor deformation as if its base (a projective line) were a Shimura variety (which it certainly is not),  as this not only is helpful in deriving the Torelli theorem, but also  yields a simple way to formulate---and leads to a short way to obtain---a recent result  of Soldatenkov \cite{soldatenkov} (qualified by him as `folklore') and  Green-Kim-Laza-Robles \cite{gklr} on the period map for the full cohomology of a hyperk\"ahlerian manifold. Strictly speaking this last application  is independent  of the Torelli theorem, but we included it, because this merely comes as a bonus after the ground work done here. Let us mention in this context recent work of Bakker-Lehn \cite{bl}, who in a sense take the opposite approach: they obtain a proof of the Torelli theorem which  avoids twistor families altogether (and which allows  mild singularities on the objects that are parametrized). 
\\

We close this introduction with a brief glance backwards along the road traveled so far. Shortly after the Calabi conjecture became Yau's theorem, it was realized  by a number of people that this could be a tool for investigating  the period map for $K3$ surfaces. The first successful application was independently due to  Siu \cite{siu} and the author \cite{looijenga}, who, by making use of connected chains of twistor conics,  proved that the period map for k\"ahlerian $K3$ surfaces is surjective. There were  no other irreducible hyperk\"ahlerian manifolds known at the time, but it was clear that these proofs would extend to that case, once one had some control on the possible K\"ahler classes. For general hyperk\"ahlerian manifolds this was eventually supplied by the work of Huybrechts
\cite{huy:basic}  (which used the Demailly-P\u{a}un  criterion \cite{dp} for the K\"ahler property as an essential  tool).  Verbitsky was probably the first to have a clear strategy for using  twistor conics to prove  injectivity  as well. 
In either case, the earlier use of chains of twistor conics  served as a template for establishing properties of the period map. But the proof of Lemma \ref{lemma:main} now shows that this path is somewhat roundabout in more ways than one, and that this has
prevented us from recognizing the utter simplicity of the situation. Since for $K3$ surfaces  the Demailly-P\u{a}un  criterion amounts to a classical fact, we can, with this bit of additional hindsight (and ensuing change of the year count), 
even more concur with Huybrechts, who  wrote at the end of his 2011 survey   \cite{huy:bourbaki} of Verbitsky's work ``\textit{To conclude, the Global Torelli theorem for $K3$ surfaces could have been proved along the lines presented here some thirty years ago''}.
\\

It is a pleasure to acknowledge correspondence with Matthias Kreck on some of the issues that arise here. 
I thank Benson Farb, Dick Hain and Andrey Soldatenkov for feedback on a preliminary  draft, in particular Andrey for alerting me to \cite{gkt}.
I thank Gopal Prasad for consultations on central extensions.

I  am also grateful to the referee, whose  careful  reading of the submitted version, led me to correct the manuscript at several places.

\section{Hyperk\"ahlerian manifolds and the twistor construction} 

\subsection*{The twistor construction} A  \emph{holomorphically symplectic manifold}  is (in this paper)  a simply-connected compact complex manifold $X$ which admits an   everywhere nondegenerate holomorphic $2$-form. A theorem of Yau asserts that every K\"ahler form on such a manifold 
contains in its cohomology class a unique  \emph{K\"ahler-Einstein}  metric (which here means that the Ricci form of the metric  is zero). This has important consequences for the deformation theory of such $X$. 

Let us first remember  that on a  finite-dimensional real inner product space $V$, an endomorphism $E\in \End(V)$  is  infinitesimally orthogonal if and only if the form  $(v, v')\in V\times V\mapsto \langle v, E(v')\rangle$ is antisymmetric,
and that this identifies such endomorphisms with $\wedge^2 V^*$. So the K\"ahler form,  the real and the  imaginary part  of a symplectic holomorphic $2$-form,  give three infinitesimal orthogonal transformations of the real tangent bundle. The former reproduces the given complex structure (which is always flat), but the vanishing of the Ricci tensor ensures that the other two are  flat as well. The real span of these three transformations is  then closed under the  Lie bracket, yielding  a copy of the Lie algebra of $\SO (3)$ (which is also that of the unit quaternions $\HH^1$). If we also add the identity, then their span is even closed under composition and the resulting algebra is a copy $\HH_X$ of the quaternions. So  $\HH_X=\RR\oplus\HH_X^\pure$, with  $\HH_X^\pure$ being the Lie algebra just mentioned.
Since the holonomy group of the underlying  Riemann manifold will  centralize  $\HH_X$, that group must be contained in a unitary group over the  quaternions. 

The intersection $\HH^1_X\cap \HH_X^\pure$ (a  $2$-sphere) is the set of  square roots of $-1$ in $\HH_X$. It contains the given 
complex structure, but we now  observe that this is one of many, for every element of this $2$-sphere defines a (new) integrable complex structure for which the metric is K\"ahler.  We refer to this family of complex structures as a \emph{twistor deformation} of $X$. It also explains why $X$ is called a \emph{hyperk\"ahler manifold} when it is endowed with a K\"ahler-Einstein metric. If merely a K\"ahler-Einstein metric exists, then we will say that $X$ is \emph{hyperk\"ahlerian}. We say that a hyperk\"ahler  manifold $X$ is \emph{irreducible} if it does not decompose nontrivially as the product of two holomorphically symplectic manifolds; this is known to be equivalent to  $\dim_\CC H^{2,0}(X)=1$ or (by  Berger's classification of holonomy groups) that every flat endomorphism of its tangent bundle is contained in the copy of the quaternions defined above.

The twistor construction is  best understood by starting out with the underlying Riemann manifold with a metric  (that we shall denote by $N$; the metric is denoted by $g$) of which we assume that the  flat  endomorphisms of the tangent  bundle form  a copy $\HH_N\subset\End(TN)$ of the quaternions. The  last property ensures that we are in the irreducible case. The multiplicative group $\HH_N^\times$  has center $\RR^\times$ and its commutator subgroup consists of the unit quaternions $\HH_N^1\subset \HH_N^\times$ (a copy of $\spin (3)$). These two meet of course in the center of $\HH_N^1$, which is
$\mu_2=\{\pm 1\}$. The Lie algebra  of $\HH_N^1$  is $\HH_N^\pure$ and  $S_N:=\HH^1_N\cap \HH_N^\pure$ is the set of square roots of $-1$ in $\HH_N$ and  is a round $2$-sphere.

The group $\HH_N^\times$ of $\HH_N$ acts on the tangent bundle of  $N$. Hence we have a contra-gradient  action of $\HH_N^\times$ 
on the cotangent bundle and therefore on the space  of $C^\infty$-forms.
The flatness ensures that this action commutes with exterior derivation and its adjoint, so that this action preserves the space of harmonic forms.
We identify this space with $H^\pt(N; \RR)$, so that $H^\pt(N; \RR)$ becomes a $\HH_N^\times$-representation.  Note that by these conventions, the subgroup $\RR^\times\subset \HH_N^\times$ defines the opposite grading of $H^\pt(N; \RR)$ in the sense that $t\in\RR^\times\subset\HH_N^\times$  acts on $H^d(N; \RR)$ as multiplication by $t^{-d}$. The action of $u\in \HH_N^\times$ on $H^{4m}(N; \RR)$ is scalar multiplication with
$(u\overline u)^{-2m}$ and the linear map $H^\pt(N; \RR)\otimes_\RR H^\pt(N; \RR)\to H^\pt(N; \RR)$ defined by the cup product  is one 
of $\HH_N^\times$ representations.

Via the above correspondence, any element of $\HH_N^\pure$ determines a $2$-form on $N$. This $2$-form  is harmonic and we thus obtain an embedding of $\HH_N^\pure$  in $H^2(N; \RR)$. We shall denote its image by $P_N$. Since $\HH^\pure_N$ is naturally oriented, so will be $P_N$. We shall see that in some sense, this oriented $3$-dimensional subspace of
$H^2(N; \RR)$ is almost a complete invariant of the metric $g$. 
It is clear that $P_N$ is invariant under the action of $\HH^\times_N$. If we restrict that action to $\HH^1_N$, then $P_N$ is essentially the adjoint  representation. We transport the  norm on $\HH_N^\pure$ to $P_N$ to obtain a positive quadratic form on $P_N$. This positive quadratic form
defines a conic in the projective plane $\PP(\CC\otimes_\RR P_N)$ that we shall denote---for reasons that become clear later---by $\per(P_N)$.

Each $J\in S_N$ defines an integrable complex structure that turns $N$ into a K\"ahler-Einstein manifold $X_J$. The elements $\omega\in \CC\otimes_\RR P_N$ that satisfy  $\omega(Ja,b)=\omega (a, Jb)=\sqrt{-1}\omega(a, b)$ make up a complex line in $\CC\otimes_\RR P_N$.
Indeed, this is just $H^0(X_J, \Omega^2_{X_J})$. Since we have $J^*\omega=-\omega$, and $J^*$ respects the above quadratic form, it follows that the line $H^0(X_J, \Omega^2_{X_J})$ defines a point of $\per(P_N)$. It is an easy exercise to verify that the map
$J\in S_N\mapsto [H^0(X_J, \Omega^2_{X_J})]\in \per(P_N)$ is a diffeomorphism which is even conformal. 

\subsection*{The associated variation of Hodge structure}
We now can state a fundamental theorem of Hitchin-Karlhede-Lindstr\"om-Ro\v{c}ek (Thm.\ 3.3 of \cite{hklr}) in a form that suits our purpose best.
It states that there exists a complex structure on $N\times \per(P_N)$ making it a complex manifold $\Xcal_N$ such that the projection onto $\per(P_N)$ is holomorphic and if $z\in \per(P_N)$ corresponds to $J\in S_N$, then the fiber over $z$ is just $X_J$. The product metric yields in every fiber a 
K\"ahler metric, but  as Hitchin \cite{hitchin} has shown, $\Xcal_N$ does \emph{not} admit a K\"ahler metric.  It is a remarkable fact that the fibers of the other projection onto $N$ define holomorphic sections of $\Xcal_N\to \per(P_N)$ (called by this community \emph{twistor lines}), but with
normal bundle isomorphic to a direct sum of $\half\dim N$ copies of $\Ocal_{\per(P_N)}(1)$. (Its underlying $C^\infty$ vector bundle is indeed trivial:
$\half\dim N$ is even, and $\Ocal_{\PP^1}(1)^2$ is $C^\infty$-isomorphic to $\Ocal_{\PP^1}(1)\oplus\Ocal_{\PP^1}(-1)\cong \Ocal_{\PP^1}^2$.) So such a section cannot appear as a fiber of a holomorphic map~(\footnote{This counterpoint between two familiar voices---one holomorphic, the other differential-geometric---has been exploited in mathematical physics to great effect.}).

For any $J\in S_N$, the centralizer  of $J$ in $\HH_N^\times$ is the intersection of $\HH_N^\times$ with $\RR+\RR J$
and so is naturally identified with $\CC^\times$.  Via this identification, $\zeta\in\CC^\times$ acts on $H^{p,q}(X_J)$ as multiplication with 
$\zeta^{-p}\bar\zeta^{-q}$. So we thus  recover the Hodge decomposition as an eigenspace decomposition.  
If we regard $\HH_N^\times$ as the group of real points of an algebraic group defined over $\RR$, then this copy of $\CC^\times$ should also be thus understood, namely as $\Ss(\RR)$, where $\Ss:=\res_{\CC|\RR}\GG_m$. This is what is called the \emph{Deligne torus}, whose  \emph{raison d'\^etre} is indeed the observation that a finite-dimensional representation of $\Ss(\RR)$ on a real vector space endows (the complexification of) 
that vector space with a Hodge structure. Here is then a way to sum this up:

\begin{proposition}\label{prop:delignetorusrep}
Let $f: \Xcal_N\to \per(P_N)$ be the projection. Then $f$ is holomorphic and $R^\pt f_*\RR_{\Xcal_N}$ is a constant local system which comes with a natural action  of $\HH^\times_N$. The action of $\HH^\times_N$ on $\per(P_N)$ is transitive and the stabilizer
of any $z\in \per(P_N)$ in $\HH^\times_N$ is a Deligne torus whose representation on the stalk  over $z$ defines the Hodge structure on $H^\pt(X_z; \CC)$.
\end{proposition}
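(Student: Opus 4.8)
The plan is to read the proposition as a repackaging of the discussion just given, organized around the fact that $\per(P_N)$ is a homogeneous space for $\HH^\times_N$. The genuinely soft points come first. That $f$ is holomorphic, and that $\Xcal_N$ is, as a space over $\per(P_N)$, nothing but the product projection $N\times\per(P_N)\to\per(P_N)$, is precisely the content of the theorem of Hitchin-Karlhede-Lindstr\"om-Ro\v{c}ek recalled above; consequently $R^\pt f_*\RR_{\Xcal_N}$ is the constant sheaf with stalk $H^\pt(N;\RR)$, the stalk at $z$ being identified with this by means of the manifold $N$ that underlies every fibre $X_z$. (It is also worth noting, for the equivariant bookkeeping below, that $\per(P_N)$ is a smooth plane conic, the positive form on $P_N$ being nondegenerate, hence isomorphic to $\PP^1$ and in particular simply connected.)

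Next I would assemble the $\HH^\times_N$-action. Recall two facts already in hand: $\HH^\times_N$ acts on $H^\pt(N;\RR)$, because it acts by flat bundle automorphisms of $TN$ and hence on $C^\infty$-forms, commuting with $d$ and $d^*$ and so preserving the harmonic representatives; and $\HH^\times_N$ acts on $P_N$, scaling the positive form by a positive scalar (the central $\RR^\times$ acting by $t^{-2}$ on $P_N\subset H^2(N;\RR)$), hence acting on $\PP(\CC\otimes_\RR P_N)$ in a way that preserves the conic $\per(P_N)$ and factors through $\HH^\times_N/\RR^\times\cong\SO(3)$. Because the local system is canonically the constant sheaf $\underline{H^\pt(N;\RR)}$, and because the conformal diffeomorphism $S_N\xrightarrow{\sim}\per(P_N)$, $J\mapsto[H^0(X_J,\Omega^2_{X_J})]$, intertwines conjugation on $S_N$ with the linear action on $\CC\otimes_\RR P_N$, these two actions combine into a single natural action of $\HH^\times_N$ on the pair consisting of the base $\per(P_N)$ and the local system. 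Transitivity on $\per(P_N)$ then reduces to transitivity of $\SO(3)$ on the round $2$-sphere $S_N$.

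Finally I would identify, for a point $z$ corresponding to $J\in S_N$, the stabilizer of $z$ and its action on the stalk over $z$. That stabilizer is the centralizer of $J$ in $\HH^\times_N$, namely $\HH^\times_N\cap(\RR\oplus\RR J)$, which is canonically $\CC^\times=\Ss(\RR)$ with $\Ss=\res_{\CC|\RR}\GG_m$ the Deligne torus; this was already observed. Since $z$ is fixed, this $\CC^\times$ acts on the stalk $H^\pt(X_z;\RR)=H^\pt(N;\RR)$ purely through the coefficient action, and reading that off the flat action on forms shows $\zeta\in\CC^\times$ acts on $H^{p,q}(X_z)$ by $\zeta^{-p}\bar\zeta^{-q}$: the subgroup $\RR^\times$ contributes the weight grading $t\mapsto t^{-d}$ on $H^d$ already recorded, while the finer bigrading is forced by the $\pm\sqrt{-1}$-eigenvalue condition $\omega(Ja,b)=\sqrt{-1}\,\omega(a,b)$ that carves $H^0(X_z,\Omega^2_{X_z})$ out of $\CC\otimes_\RR P_N$. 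This is exactly Deligne's recipe for the Hodge structure on $H^\pt(X_z;\CC)$.

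The one step that requires care rather than routine bookkeeping is this last one: with the contragredient convention adopted for the action on forms one must verify that the exponents normalize precisely to $\zeta^{-p}\bar\zeta^{-q}$ (not to the opposite signs or a shifted weight), and one must make sure that the stabilizer's action on an individual stalk is literally the restriction of the globally defined action on $R^\pt f_*\RR_{\Xcal_N}$, so that ``defines the Hodge structure'' is true on the nose rather than up to a harmless twist. The remaining ingredients---constancy of the local system, transitivity, and the centralizer computation---are soft by comparison.
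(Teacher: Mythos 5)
Your proposal is correct and is essentially the paper's own argument: the paper gives no separate proof, presenting the proposition as a summary of the immediately preceding discussion (HKLR for holomorphy of $f$, the flat $\HH^\times_N$-action on harmonic forms, the conformal identification $S_N\cong\per(P_N)$ giving transitivity, and the centralizer of $J$ being $\Ss(\RR)$ acting by $\zeta^{-p}\bar\zeta^{-q}$ on $H^{p,q}$), which is exactly what you assembled. Your added care about the normalization of exponents and the compatibility of the stalkwise action with the global one is a reasonable bit of bookkeeping but introduces nothing beyond the text's conventions.
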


So $\per(P_N)$ not only plays here the role of a period space, but also parametrizes the elements of a conjugacy class of homomorphisms $\Ss(\RR)\to \HH^\times$. This is reminiscent of the data that go into the definition of a Shimura variety.

\begin{remark}\label{rem:lie}
Every  $\eta\in P_N\ssm\{0\}$ is the K\"ahler class for some hyperk\"ahler structure on $N$ and so we have a representation $\rho_\eta$ of  the Lie algebra
$\sllie_2(\CC)$ on $H^\pt(N; \CC)$ for which 
$
(\begin{smallmatrix}
0 &0\\
1 &0
\end{smallmatrix})
$
goes to the cup product with $\eta$ and 
$
(\begin{smallmatrix}
1 &0\\
0  &-1
\end{smallmatrix})
$
acts as on $H^k(N; \CC)$ a multiplication by $2m-k$. Verbitsky observed in 1990 \cite{verbitsky1990} that the span of the images of the $\rho_\eta$'s
generate in the endomorphism Lie algebra of $H^\pt(N; \CC)$ 
a graded Lie subalgebra isomorphic to $\so_5(\CC)$. This graded Lie algebra  is of course defined over $\RR$ and as such isomorphic to $\so(4,1)$.
The  part in degree zero is reductive and  has $\HH_N^\pure$ as its the semisimple part and the span of 
$
(\begin{smallmatrix}
1 &0\\
0  &-1
\end{smallmatrix})
$ as its  center.
\end{remark}

We close this section with: 

\begin{lemma}
The group of automorphisms of a holomorphically symplectic manifold  $X$ which fix a given K\"ahler class,  is finite. 
This is in particular so for the group $\aut_0(X)$ of automorphisms that are isotopic to the identity. If $N$ is an Einstein manifold as  above, then its group
of isometries that are isotopic to the identity, $\aut_0(N)$, coincides with the $\aut_0$  of every fiber of the associated twistor deformation.  
\end{lemma}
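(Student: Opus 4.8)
The plan is to treat the three assertions in turn, letting Yau's theorem (recalled above) do the real work.

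\emph{First assertion.} Fix a Kähler class $\kappa$ on the holomorphically symplectic manifold $X$ and let $g_\kappa$ be the Ricci-flat Kähler metric in $\kappa$. If $\phi\in\aut(X)$ fixes $\kappa$, then $\phi^*g_\kappa$ is again a Kähler metric (as $\phi$ is biholomorphic), lies in the class $\phi^*\kappa=\kappa$, and is still Ricci-flat; by the uniqueness part of Yau's theorem $\phi^*g_\kappa=g_\kappa$, so $\phi$ is an isometry of $(X,g_\kappa)$. Thus the group of such $\phi$ is a closed subgroup $G$ of the compact Lie group of isometries of $(X,g_\kappa)$, and it remains only to see that $G$ is discrete. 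Its Lie algebra is contained in $H^0(X,T_X)$, and this vanishes: the holomorphic symplectic form identifies $T_X$ with $\Omega^1_X$, while $H^0(X,\Omega^1_X)=H^{1,0}(X)=0$ because $X$ is simply connected. A discrete closed subgroup of a compact group is finite.

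\emph{Second assertion.} This comes for free: a diffeomorphism isotopic to the identity acts trivially on $H^\pt(X;\RR)$, hence fixes every Kähler class, so $\aut_0(X)$ is contained in a group just shown to be finite.

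\emph{Third assertion.} Keep $N$ and $g$ as in the standing hypothesis and fix $J\in S_N$, with associated fibre $X_J$ of the twistor deformation; I would prove both inclusions. If $\phi$ is a biholomorphism of $X_J$ isotopic to the identity, it fixes the Kähler class of $g$, so $\phi^*g$ is a Ricci-flat Kähler metric in that class and hence equals $g$ by Yau uniqueness; thus $\phi$ is an isometry of $(N,g)$ isotopic to the identity, i.e.\ $\phi\in\aut_0(N)$. Conversely, let $\phi\in\aut_0(N)$. Since $\phi$ preserves the Levi-Civita connection, pullback by $\phi$ carries parallel endomorphisms of $TN$ to parallel endomorphisms, so it acts on the quaternion algebra $\HH_N$ by an $\RR$-algebra automorphism, hence by an element of $\SO(3)$ on $\HH_N^\pure$. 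Under the embedding $\HH_N^\pure\hookrightarrow H^2(N;\RR)$ with image $P_N$ this matches the action of $\phi^*$ on $H^2(N;\RR)$, which is trivial since $\phi$ is isotopic to the identity; by injectivity of the embedding, $\phi$ acts trivially on $\HH_N^\pure$, hence on all of $\HH_N$, hence commutes with $J$. Therefore $\phi$ is holomorphic for $X_J$ and, being isotopic to the identity, lies in $\aut_0(X_J)$. As $J\in S_N$ was arbitrary, $\aut_0(N)=\aut_0(X_J)$ for every fibre.

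I expect the only delicate point to be the compatibility, in the third step, of the isometry action on the quaternion algebra $\HH_N$ with the isometry action on $H^2(N;\RR)$ — that is, the bookkeeping identity expressing that an isometry conjugates a parallel endomorphism exactly as it pulls back the associated parallel harmonic $2$-form. This is precisely where the hypothesis on $N$ (flatness of the relevant endomorphisms) enters; everything else is formal once Yau's existence-and-uniqueness theorem is in hand.
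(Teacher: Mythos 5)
Your proof is correct and follows essentially the same route as the paper: Yau's uniqueness makes the automorphisms fixing the K\"ahler class isometries of the K\"ahler--Einstein metric, hence a compact group, and the identification of holomorphic vector fields with holomorphic $1$-forms via the symplectic form (which vanish by simple connectedness and the K\"ahler condition) gives discreteness, hence finiteness. Your third step merely spells out, via the conjugation action on the parallel endomorphisms $\HH_N$ and the embedding $\HH_N^\pure\hookrightarrow H^2(N;\RR)$, what the paper dismisses as ``obvious from the preceding discussion,'' and it does so correctly.
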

\begin{proof}
For the first assertion, just note  that the elements of  $\aut_0(X)$ will fix the K\"ahler-Einstein metric associated with this K\"ahler class and since the automorphism group of a Riemann manifold is a compact Lie group, so is  $\aut_0(X)$.  But a one-parameter subgroup of $\aut_0(X)$ determines a nontrivial holomorphic vector field on $X$, whose  contraction with the symplectic form then produces a nontrivial holomorphic $1$-form. On a simply-connected  complex K\"ahler manifold, these do not exist. 
The other assertions are obvious from the preceding discussion.
\end{proof}

\section{Teichm\"uller spaces and period maps} From now on,  we fix  a compact simply-connected manifold  $M$ of dimension $4m$ which admits an irreducible  hyperk\"ahlerian structure. This structure determines an orientation of  $M$ and an oriented  $3$-plane $P_o$ in $H^2(M; \RR)$.
Since $M$ is simply-connected,  $H:=H^2(M; \ZZ)$ is free abelian. 
According  to Bogomolov, Beauville and Fujiki there exists a nondegenerate quadratic  form $q: H\to \ZZ$
such that for some positive rational number $c$, the identity  $q(a)^m=c\int_M a^{2m}$ holds for all $a\in H$ and for which $q_\RR$ is positive on the oriented  $3$-plane in $P_o$. They prove that the form $q_\RR$ has signature $(3,n)$, with  $n:=\! b_2(M)-3$.  The Grassmannian of positive $3$-planes is contractible---it is the symmetric space of $\Orth(q_\RR)$---so that the tautological $3$-plane bundle over it is trivial. Hence the  orientation of $P_o$ orients the whole  bundle, which means that every positive $3$-plane in $H_\RR$ now comes with an orientation.  We shall refer to this as a \emph{spin orientation} on $H$.

We  make both the orientation on $M$  and spin orientation on $H$ as part of our initial data and so we shall only  consider hyperk\"ahlerian structures  that  induce the given orientation (but as Soldatenkov \cite{soldatenkov} has noted, this is in fact automatically the case) and spin orientation (for which the same property might hold---by a theorem of Donaldson this is the case for $K3$ surfaces). This spin orientation determines for every  positive \emph{oriented} $2$-plane $\Pi$  in $H_\RR$,  a \emph{positive cone}: $\Pi^\perp$ has signature $(1, n)$ and so the set of positive vectors in $\Pi^\perp$ make up an antipodal pair of open cones  and the spin orientation singles out one of them.  

We denote by $h_q: H_\CC\times H_\CC\to \CC$ 
the hermitian extension of the symmetric bilinear form associated with $q_\RR$. 

\subsection*{The period manifold} 
A  hyperk\"ahlerian structure on $M$ turns $M$ into  a complex manifold $X$, so that we have 
a Hodge decomposition $H_\CC= H^{2,0}(X)\oplus H^{1,1}(X)\oplus H^{0,2}(X)$ with $H^{2,0}(X)$ of dimension $1$.
Since the cup product  preserves the Hodge structure on $X$, the Hodge type of $q$ will be  $(-2,-2)$. The above characterization
of $q$ then shows  that the Hodge decomposition is orthogonal for $h_q$, with $h_q$  positive on $H^{2,0}(X)\oplus  H^{0,2}(X)$ and of signature $(1,n)$ on $ H^{1,1}(X)$. It also follows that  $H^{2,0}(X)$ is isotropic for $q_\CC$. Since $H^{0,2}(X)=\overline{H^{2,0}(X)}$, the Hodge decomposition is then completely given by the complex line $H^{2,0}(X)$, which, as we just observed, is isotropic for $q_\CC$ and positive for $h_q$. So such Hodge structures are parametrized by an open subset $\per(H_\RR)$ of 
the nonsingular quadric $\check{\per}(H_\RR)$ (of complex dimension $n+1$) in $\PP(H_\CC)$ defined by $q_\CC$, namely the locus
which parametrizes the  lines that are $h_q$-positive.  (The quadric $\check{\per}(H_\RR)$ is homogenous under its $\Orth (q_\CC)$-action and $\per(H_\RR)$ is an open $\Orth(q_\RR)$-orbit in this quadric.)  It is clear that such a period manifold $\per(V)$ is defined for  any real vector space $V$ equipped with a nondegenerate quadratic form of signature $(p,\dim V-p)$ (but we need $p\ge 2$ to make it nonempty).

Note that a point $z\in \per$ determines an oriented positive $2$-plane $\Pi_z$ in $H_\RR$: for the associated Hodge decomposition, the sum $H_z^{2,0}+H_z^{0,2}$ is the complexification of a $2$-plane $\Pi_z$ in $H_\RR$, which is indeed canonically oriented (and hence  determines a positive cone). So the positive cone we have associated to $\Pi_z$ is an open subset of the real part of $H^{1,1}_z$; we refer to it as the \emph{positive cone defined by} $z$.  Conversely, an oriented positive $2$-plane in $H_\RR$ determines a point of $\per$.

\begin{question}\label{question:symmspace}
If $P\subset H_\RR$ is a positive $3$-plane, then $\per(P)=\per (H_\RR)\cap \PP(P_\CC)=\check{\per}(H_\RR)\cap \PP(P_\CC)$ is a conic.
We prefer to call this a \emph{twistor conic} rather than twistor line, since that name had already been taken (in the early literature of the subject a twistor line is a section of a twistor deformation).
A twistor conic is a maximal irreducible compact subspace of $\per$. Its Douady space is identified with the Grassmannian $\Gr_3^+(H_\CC)$ of $h_q$-positive complex $3$-planes in $H_\CC$, where one should note that the  projective plane defined by such a $3$-plane meets $\per$ in a nonsingular conic. This is a bounded symmetric domain for $\Un(h_q)$ whose real part is the  symmetric space $\Gr_3^+(H_\RR)$ of $\Orth(q)$ which parametrizes the twistor conics. Does this space parametrize geometric  structures on $M$ (in a manner that 
for real $3$-planes gives us the structure  of an Einstein metric)? 
\end{question}

\subsection*{The `main proposition'}
The \emph{Teichm\"uller space} $\teich(M)$  of $M$ is for the moment just a  set, namely the set  of hyperk\"ahlerian structures on $M$ given up to $C^\infty$-isotopy.
By assigning to a hyperk\"ahlerian complex structure on $M$ the associated Hodge decomposition on $H$, we obtain the \emph{period map}
\[
\Per: \teich(M)\to \per(M).
\]
The Kodaira-Spencer theory suggests that $\teich(M)$ has the structure of a (perhaps non-separated)  complex manifold  and the local Torelli theorem would then tell us that $\Per$ is a local isomorphism. We will establish this when we have at our disposal Proposition \ref{prop:separation} below. 
Let  us first observe that for a twistor family this gives us the period map discussed earlier. More precisely,  if $M$ is endowed with an Einstein metric and the resulting Riemann manifold is denoted by $N$, so that we then have defined a twistor family
$\Xcal_N\to \per(P_N)$, then:

\begin{lemma}\label{lemma:killing}
The action of the group $\HH^1_N$ of unit quaternions on $H_\RR$ leaves $q_\RR$ invariant.
We have  $\per(P_N)=\per(H_\RR)\cap \PP(P_\CC)$  and
the tautological map $\per(P_N)\to\teich(M)$ composed with $\Per$ is  the inclusion $\per(P_N)\subset \per(H_\RR)$. $\square$
\end{lemma}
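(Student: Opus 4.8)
The plan is to unwind the definitions and check that the twistor-theoretic period space $\per(P_N)$ and the Hodge-theoretic period space $\per(H_\RR)$ are cut out by compatible data. First I would address the invariance claim: the unit quaternions $\HH_N^1$ act on $H_\RR=H^\pt(N;\RR)$ through their action on harmonic forms (Proposition \ref{prop:delignetorusrep}), and on $H^2$ this factors through $\SO(P_N)$ acting by the adjoint representation on the $3$-plane $P_N$ together with the trivial action on $P_N^\perp$; what needs checking is that $q_\RR$, which up to a scalar is $a\mapsto (\int_M a^{2m})^{1/m}$ polarized, is preserved. Since the $\HH_N^\times$-action on $H^\pt(N;\RR)$ respects the cup product (this is exactly the statement quoted in the first section that the cup product map is one of $\HH_N^\times$-representations) and $\HH_N^1$ acts trivially on $H^{4m}(N;\RR)$ (the scalar $(u\bar u)^{-2m}$ is $1$ for unit quaternions), the Fujiki relation forces $q_\RR(ua)=q_\RR(a)$ for all $u\in\HH_N^1$. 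That gives the first sentence.

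Next I would prove the identification $\per(P_N)=\per(H_\RR)\cap\PP(P_\CC)$. The right-hand side is the set of $h_q$-positive isotropic lines in $P_\CC$, i.e.\ the nonsingular conic $\per(P)$ attached to the positive $3$-plane $P=P_N$ in the sense of Question \ref{question:symmspace}. The left-hand side was defined in Section 1 via the positive quadratic form transported from $\HH_N^\pure$; there it was observed that for each $J\in S_N$ the line $H^0(X_J,\Omega^2_{X_J})\subset\CC\otimes_\RR P_N$ is isotropic for that quadratic form (because $J^*\omega=-\omega$ and $J^*$ preserves it) and that $J\mapsto[H^0(X_J,\Omega^2_{X_J})]$ is a conformal diffeomorphism onto $\per(P_N)$. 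So I only need that the quadratic form on $P_N$ coming from the quaternionic norm agrees, up to a positive scalar, with the restriction of $q_\RR$ to $P_N$. Both are positive-definite $\SO(3)$-invariant forms on the irreducible $3$-dimensional representation $\HH_N^\pure\cong P_N$, hence proportional by Schur; positivity of the constant is immediate since $q_\RR$ is positive on positive $3$-planes. A rescaling does not change the associated conic in $\PP(P_\CC)$, so the two conics coincide.

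Finally, the compatibility with $\Per$: by construction the tautological map $\per(P_N)\to\teich(M)$ sends the point $z$ corresponding to $J\in S_N$ to the isotopy class of the complex manifold $X_J$, and $\Per$ then records its Hodge decomposition on $H$, whose $(2,0)$-part is precisely $H^0(X_J,\Omega^2_{X_J})\subset P_{N,\CC}\subset H_\CC$. Tracing through the identifications above, this is the same point of $\per(H_\RR)$ that $z$ names under the inclusion $\per(P_N)=\per(H_\RR)\cap\PP(P_\CC)\hookrightarrow\per(H_\RR)$. Hence the composite is the inclusion, as claimed.

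The only mildly delicate point is the Schur-type comparison of quadratic forms on $P_N$: one must be a little careful that the quadratic form used to define $\per(P_N)$ in Section 1 is genuinely the quaternionic-norm form on $\HH_N^\pure$ transported to $P_N$, and not some a priori different polarization; but since everything in sight is $\HH_N^1$-equivariant and $\HH_N^\pure$ is an irreducible real $\SO(3)$-module, any two invariant quadratic forms are proportional, so there is no real obstacle. Everything else is bookkeeping with the definitions already set up, which is presumably why the authors marked the lemma with $\square$.
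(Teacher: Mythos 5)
The paper offers no proof of this lemma at all---it is explicitly left as an exercise---and your argument is a correct solution along the intended lines: cup-product equivariance of the $\HH_N^\times$-action together with triviality of the unit quaternions on $H^{4m}$ yields $q$-invariance via the Fujiki relation, the Schur comparison of the two $\HH^1_N$-invariant forms on $P_N\cong\HH_N^\pure$ identifies the two conics, and the compatibility with $\Per$ is then definitional, since $J\mapsto[H^0(X_J,\Omega^2_{X_J})]$ is exactly how $\per(P_N)$ was parametrized. The only point worth tightening is that for even $m$ the Fujiki relation gives only $q_\RR(ua)^m=q_\RR(a)^m$, hence $q_\RR(ua)=\pm q_\RR(a)$ pointwise; the sign is pinned down by the connectedness of $\HH^1_N$ (and you implicitly use that $q_\RR|P_N$ is positive definite, i.e.\ that $P_N$ lies in $\Gr_3^+(H_\RR)$, which the paper's conventions presuppose).
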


The proof is left as an exercise.
Since we have fixed $M$, we shall from now on write $\per$ for $\per(H_\RR)$ and $\teich$ for $\teich(M)$.
\\

Parts of the following proposition 
appear in somewhat different incarnations (at least implicitly) in various places in the  literature (and then with somewhat different proofs), which makes it hard to give it a proper attribution. 
The  archetypical version is certainly the Main Lemma of Burns-Rapoport \cite{burns-rapoport}, which it amplifies and generalizes. We here replace their use of Bishop's analyticity  theorem by a properness theorem of  Fujiki (which was not available at the time). Part (iv) is due to Hassett-Tschinkel (\cite{ht}, Thm. 2.1).

\begin{proposition}\label{prop:separation} 
Let $\pi:\Xcal\to U$ and $\pi':\Xcal'\to U$ be proper holomorphic families of  hyperk\"ahlerian manifolds over the same simply-connected complex manifold $U$. Suppose we are given an isomorphism between the associated variations of Hodge structure in degree two: $\phi: R^2\pi'_*\ZZ_{\Xcal'}\cong R^2\pi_*\ZZ_{\Xcal}$. If for  some $o\in U$, $\phi_o$ is induced by  isomorphism $f_o: X_o\cong X'_o$, then  there exist a proper generically finite morphism  $\hat U \to U$, 
a closed  analytic subspace $\Zcal\subset\Xcal\times_{\hat U}\Xcal'$  flat over ${\hat U}$,  and 
a closed analytic, proper subset $K\subsetneq {\hat U}$, 
such that 
\begin{enumerate}
\item [(i)] if $\hat u\in {\hat U}\ssm K$ lies over $u\in U$, then  $Z_{\hat u}$ is  the graph of an 
isomorphism $f_{\hat u}:X_{u}\cong X'_{u}$ which is isotopic to $f_o$ and induces $\phi_u$; moreover  $f_o$ appears in this manner: for some  
$\hat o\in \hat U\ssm K$ over $o$, we have $f_{\hat o}=f_o$,
\item [(ii)] for every $u\in U$,  $X_{u}$ and $X'_{u}$ are bimeromorphically equivalent,
\item [(iii)] if there exist  $\kappa\in H^0(U,R^2\pi_*\RR)$ and $\kappa'\in H^0(U,R^2\pi'_*\RR)$ which restrict 
to a K\"ahler class in every fiber of $\pi$ resp.\ $\pi'$ and $\phi_o(\kappa'(o))=\kappa (o)$, then we can take ${\hat U}=U$ and 
$\Zcal$ will be the graph of an $U$-isomorphism $\Xcal\cong \Xcal'$,
\item [(iv)] the group $\aut_0(\Xcal/U)$ of automorphisms of $\Xcal/U$ that are fiberwise isotopic to the identity is finite, specializes for  every $u\in U$  to the  group $\aut_0(X_u)$ of automorphisms of $X_u$ isotopic to the identity, and is via $f_o$ naturally identified with $\aut_0(\Xcal'/U)$. Its also acts on the $U$-morphism $\Zcal\to \hat U$ and has the property that the action transitive on the generic fiber of $\hat U/U$.
\end{enumerate}
\end{proposition}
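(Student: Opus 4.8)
The plan is to reduce everything to a relative version of the classical argument that two holomorphically symplectic manifolds with an isomorphism of $H^2$-Hodge structures are bimeromorphic, and then to keep track of the locus where this bimeromorphic map is biholomorphic. First I would work with the fibre product $W := \Xcal\times_U\Xcal'$ together with the diagonal-type Hodge class: on $W$ the local system $R^2\pi_*\ZZ \cong R^2\pi'_*\ZZ$ via $\phi$, and the graph $\Gamma_{f_o}\subset X_o\times X'_o$ is an algebraic cycle of the right dimension $4m$ whose cohomology class $[\Gamma_{f_o}]\in H^{4m}(X_o\times X'_o)$ is a Hodge class for the product Hodge structure, and which (because $f_o$ induces $\phi_o$) extends as a flat section $\gamma$ of the relevant piece of $R^{4m}(\pi\times\pi')_*\ZZ_W$ over all of $U$ (here simple-connectedness of $U$ is used, as is the fact that the Hodge type is locally constant along $U$). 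The point is that $\gamma$ is a horizontal family of Hodge classes of degree $4m$ on the fibres of $W\to U$, specialising to $[\Gamma_{f_o}]$.

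Next I would invoke a properness/existence result for the relative Douady space of subspaces of $W/U$ with the Hilbert polynomial of $\Gamma_{f_o}$ — this is where Fujiki's properness theorem enters, replacing the Burns–Rapoport appeal to Bishop. Combined with the fact that on the central fibre there is an actual cycle (the graph of $f_o$) in the relevant class, one gets a component $\Dcal$ of the relative Douady space dominating $U$; taking $\hat U\to U$ to be (a suitable component, through a chosen point $\hat o$ over $o$ mapping to $[\Gamma_{f_o}]$, of) this Douady space gives a proper generically finite morphism, and the universal subspace pulls back to a flat family $\Zcal\subset\Xcal\times_{\hat U}\Xcal'$. Over the generic point of $\hat U$ one argues, exactly as in the surface case (positivity of the symplectic form, the fact that the normalised cycle carries the graph of a bimeromorphic map because the class $\gamma$ pairs correctly with $H^{2,0}\otimes H^{0,2}$ and with a Kähler class, and Hodge-index type positivity forcing the correspondence to have degree one in each direction), that the generic member of $\Zcal$ over $\hat U$ is the graph of a bimeromorphic map $X_u\dashrightarrow X'_u$. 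This bimeromorphic map, being a correspondence that induces $\phi_u$ on $H^2$, can fail to be biholomorphic only on the closed analytic proper subset $K\subsetneq\hat U$ where the cycle $Z_{\hat u}$ is not the graph of a morphism (equivalently where indeterminacy actually occurs); off $K$ one gets honest isomorphisms $f_{\hat u}$, and isotopy to $f_o$ is automatic because $\hat U$ is connected (after passing to a connected component containing $\hat o$) and isotopy class is locally constant, while the identity $f_{\hat o}=f_o$ is arranged by the choice of $\hat o$. This proves (i). Assertion (ii) follows immediately, since for every $u$ one has a bimeromorphic $X_u\dashrightarrow X'_u$ coming from the (possibly non-biholomorphic) cycle $Z_{\hat u}$ over any $\hat u$ above $u$ — or, if $\hat U\to U$ is not surjective, one extends by taking closures of the family of bimeromorphic maps and uses that bimeromorphic equivalence of hyperkähler manifolds is preserved under specialisation within the connected base.

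For (iii) I would run the same argument but now observe that the presence of the fibrewise Kähler classes $\kappa,\kappa'$ with $\phi_o(\kappa'(o))=\kappa(o)$ forces, over all of $U$, $\phi_u(\kappa'(u))=\kappa(u)$ (both are flat sections agreeing at $o$ and $U$ is connected). A bimeromorphic map between hyperkähler manifolds that matches up Kähler classes is an isomorphism — this is the standard fact that such a map has no indeterminacy once it preserves the ample/Kähler cone, so there is no exceptional locus and the graph is already flat over $U$; hence $\hat U=U$ works and $\Zcal$ is the graph of a global $U$-isomorphism. Finally, for (iv): the lemma at the end of Section 1 gives that $\aut_0(X_u)$ is finite, and the same argument relativises — a one-parameter subgroup of $\aut_0(\Xcal/U)$ would produce a fibrewise holomorphic vector field, hence (by contraction with the fibrewise symplectic form) a fibrewise holomorphic $1$-form, impossible on simply-connected Kähler fibres; so $\aut_0(\Xcal/U)$ is a finite group whose members restrict fibrewise to elements of $\aut_0(X_u)$, and conversely a fibrewise automorphism isotopic to the identity inducing the identity on $R^2\pi_*\ZZ$ spreads out. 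The identification with $\aut_0(\Xcal'/U)$ is via conjugation by $f_o$ (more precisely by the generic isomorphism $f_{\hat u}$, which is well-defined up to this finite group), and the action on $\Zcal\to\hat U$ is by pre- and post-composition; transitivity on the generic fibre of $\hat U/U$ is the statement that any two isomorphisms $X_u\cong X'_u$ inducing $\phi_u$ differ by an element of $\aut_0(X'_u)$ — which holds because their ratio is an automorphism inducing the identity on $H^2(\ZZ)$, and such an automorphism of a hyperkählerian manifold is isotopic to the identity (it fixes a Kähler class up to the finite monodromy, hence lies in the compact group $\aut_0$, and one checks it is in the identity component, or cites the corresponding known fact about the action of the mapping class group on $H^2$).

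**Main obstacle.** I expect the crux to be the passage from the horizontal Hodge class $\gamma$ to an honest relative cycle dominating $U$ and the identification of its generic member with the graph of a bimeromorphic map — i.e. getting the relative Douady/Hilbert space to be proper over $U$ and nonempty in the right component, and then the Hodge-theoretic positivity argument (symplectic form plus Kähler class) showing the correspondence has degree one and no "extra" components. In the surface case this is Burns–Rapoport's Main Lemma; in higher dimension one must be careful that Fujiki's properness applies to the relevant cycle spaces and that the correspondence, a priori only a cycle of dimension $4m$ in a $8m$-fold, really does contain the graph of a dominant rational map rather than some degenerate configuration — controlling the components of $Z_{\hat u}$ and showing the "good" component is the graph of a bimeromorphism is the delicate point, and is exactly what forces the introduction of the bad locus $K$ and the cover $\hat U\to U$ rather than getting an isomorphism outright.
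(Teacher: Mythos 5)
The overall architecture you chose (relative Douady space of $\Xcal\times_U\Xcal'$, Fujiki's properness replacing Bishop, the Burns--Rapoport cycle argument for (ii), matching K\"ahler classes for (iii), finiteness of fibrewise automorphisms for (iv)) is the same as the paper's, but there is a genuine gap at the crucial step: your mechanism for making the chosen component $\hat U$ of the Douady space dominate $U$ does not work. You extend $[\Gamma_{f_o}]$ to a flat section $\gamma$ of $R^{4m}(\pi\times\pi')_*\ZZ$ and treat it as a horizontal family of Hodge classes, then claim that this, ``combined with'' properness and the existence of the cycle on the central fibre, yields a component dominating $U$. But a fibrewise Hodge class is not a cycle: asserting that $\gamma_u$ is represented by an analytic subspace of $X_u\times X'_u$ for $u$ near $o$ is a variational--Hodge--conjecture type statement which is not available here, and Fujiki's properness only makes the image $r(\hat U)\subset U$ \emph{closed}, so knowing it contains $o$ proves nothing. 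The missing ingredient is the local Torelli theorem: since $\phi$ is an isomorphism of variations of Hodge structure and the period map is a local isomorphism, $f_o$ extends to a $V$-isomorphism $\Xcal_V\cong\Xcal'_V$ over a neighbourhood $V$ of $o$; the graph of this extension lies in $\Zcal$, so $r(\hat U)$ contains the open set $V$ and, being closed, equals $U$. Without this step you get neither (ii) for \emph{every} $u\in U$, nor that $K$ is a proper subset, nor the input needed for (iii); note that local Torelli never appears in your write-up.

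Two secondary problems. In (iii), matching K\"ahler classes does force each fibrewise bimeromorphic map to be biholomorphic (the paper argues with the associated Einstein metrics and a Cauchy-sequence argument; your K\"ahler-cone argument is a reasonable alternative), but you still must explain why $\hat U\to U$ has degree one so that these isomorphisms glue into a single $U$-isomorphism: the paper shows $K=\emptyset$, deduces from local Torelli that $\hat U\to U$ is an unramified covering, and then uses that $U$ is simply connected and $\hat U$ irreducible --- this is where the simple-connectedness hypothesis is actually spent, whereas your ``hence $\hat U=U$ works'' skips it. In (iv), your justification of transitivity --- that an automorphism inducing the identity on $H^2(\ZZ)$ is isotopic to the identity --- is unproven and is precisely the kind of assertion the Kreck--Su examples warn about; the correct argument is that two points of $\hat U\ssm K$ over the same $u$ give isomorphisms which are both isotopic to $f_o$ because $\hat U\ssm K$ is connected, so their ratio lies in $\aut_0(X_u)$, and one then needs that this element extends to an element of $\aut_0(\Xcal/U)$ --- the ``spreads out'' step you assert without proof, and which the paper obtains by applying part (iii) to two copies of $\Xcal/U$ with $\phi=\mathrm{id}$.
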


\begin{proof}
Let $D:=D_{\Xcal\times_U\Xcal'/U}$ be the relative  Douady space which parametrizes the  compact analytic subspaces of $\Xcal\times_U\Xcal'$ contained in a fiber  of $\Xcal\times_U\Xcal'/U$. This exists as an analytic space by a theorem of 
Pourcin \cite{pourcin}, and comes with a universal family $\Zcal_D\subset \Xcal\times_U\Xcal'\times_U D$ that is proper and flat over $D$. Let ${\hat U}$ be the irreducible component of $D$  which contains the graph of  $f_o:X_o\cong X'_o$ and put $\Zcal:=\Zcal_{\hat U}$. 
The map $\Xcal\times_U\Xcal'\to U$ is a \emph{weakly K\"ahler} morphism,  which means that there exists a $2$-form on the source whose restriction to every fiber is a 
K\"ahler form. It follows from work of Fujiki  (see  the last paragraph of $\S 1$ of \cite{fujiki-2}) that the projection $r: {\hat U}\to U$ is proper (he actually  assumes that we have in K\"ahler morphism, meaning that the putative $2$-form is closed, but for this conclusion, the weaker assumption will do). In particular, $r(\hat U)$ is a closed subvariety of $U$. We show that it is all of $U$.

The local Torelli theorem implies that there exists a neighborhood $V$ of $o$ in $U$ such that $f_o$ extends to $V$-isomorphism $\Xcal_V\to \Xcal'_V$. The graph of this isomorphism appears in $\Zcal$ and so $V$ lies in the image of $r$.  A closed subvariety of $U$ which contains a nonempty open set equals $U$ and so $r(\hat U)=U$. It also follows that the locus $K$ of ${\hat u}\in {\hat U}$ for which $Z_{\hat u}$ is not   the graph of an isomorphism is a proper closed analytic subset of ${\hat U}$ and that $r(K)$ is a closed analytic  proper subset of $U$.

The proof of (ii)  follows a standard argument \cite{burns-rapoport}: if $\hat u\in {\hat U}$ lies over $u$, then the  algebraic cycle  $Z_{\hat u}$ on  $X_{u}\times X'_{u}$ is of pure complex dimension $2n$ and contains a unique irreducible component with multiplicity one which projects with degree one on both $X_{u}$ and $X'_{u}$. That component therefore establishes a bimeromorphic equivalence between the two factors. 

In the situation of (iii), each fiber of either family comes with a 
the K\"ahler class. So if we  give each fiber  the associated Einstein metric, then   
$Z_{\hat u}$ will be  the graph of an isometry whenever it is the graph of an isomorphism. But the fiber  metric depends continuously on the base point
and so even when $\hat u\in K$, the correspondence $Z_{\hat u}$ will then implement an isometry of an open-dense subsets of $X_u$ onto one of  $X'_u$. As  it will take a Cauchy sequence to a Cauchy sequence, this implies that $Z_{\hat u}$ is in fact the graph of an isometry and hence of an isomorphism.  In other words, $K=\emptyset$. So  ${\hat U}$ now parametrizes isometries in the same isotopy class.  The local 
Torelli theorem then implies that $\hat U\to U$ is an unramified covering. But as $U$ is simply connected and $\hat U$ irreducible, $\hat U\to U$ must be an isomorphism.

We now prove (iv). Let $u\in U$. Since $\aut_0(X_u)$ acts as the identity on $H_2(X_u; \RR)$, it  fixes a K\"ahler class. This class is uniquely represented by a  K\"ahler-Einstein metric, which is then also preserved by the finite group $\aut_0(X_u)$. 
Let $f_u\in \aut_0(X_u)$, and assume it has  finite order, $d$ say.  If we apply part (iii) to two copies of $\Xcal/U$ with $\phi$ the identity and $(o, f_o)$ replaced by $(u, f_u)$, then we find that $f_u$ extends to an automorphism $F$ of $\Xcal/U$ which will have that same order $d$ in every fiber. 
This also applies to $\Xcal'/U$, and as their restrictions over ${\hat U}\ssm K$ are the same, (iv) follows.

It is clear that the group $\aut_0(\Xcal/ U)$ acts (by precomposition) on the $U$-morphism $\Zcal\to \hat U$.
Let ${\hat u}_1, {\hat u}_2\in {\hat U}\ssm K$ lie  over the same point $u\in U$. We claim that  they then lie in the same $\aut_0(\Xcal/ U)$-orbit.
Our  assumption means that $f_{{\hat u}_1}$ and $f_{{\hat u}_2}$ differ by  an automorphism $f_u$ of $X_{u}$.  Since ${\hat U}\ssm K$ is connected, $f_u$ must be isotopic to the identity. By (iv), $f_u$ is then the specialization of an $F\in \aut_0(\Xcal/{U})$. So precomposition with $F$  defines  an automorphism of ${\hat U}/U$ which takes ${\hat u}_1$ to ${\hat u}_2$. 
In particular, $\aut_0(\Xcal/ U)$ acts transitively on the generic fiber of $\hat  U/U$.
\end{proof}

We do not know whether the morphism $\hat U\to U$ appearing in this proposition is always an isomorphism.

\begin{remark}\label{rem:bimer}
As is well-known, any bimeromorphic equivalence $f: X\dashrightarrow X'$ between compact complex manifolds is a morphism on the complement of a closed analytic subset $Y\subset X$ of complex codimension $\ge 2$ and hence takes a holomorphic $p$-form on $X'$ to one on $X$.
Assuming now that both $X$ and $X'$ are hyperk\"ahlerian, then for a nondegenerate holomorphic $2$-form $\alpha'$ on $X'$, $f^*\alpha'$ will be a nondegenerate $2$-form on $X$, so that  $f|X\ssm Y$ must be a local isomorphism (in other words, there is no contraction in codimension one). The same argument applied to $f^{-1}$  proves that we can arrange that there exist closed analytic subsets $Y\subset X$ and $Y'\subset X'$  of complex codimension $\ge 2$ such that
$f$ maps $X\ssm Y$ isomorphically onto $X'\ssm Y'$. In particular, $f$ induces an isomorphism $H^2(X'; \ZZ)\to H^2(X; \ZZ)$ of Hodge structures.
A theorem of Huybrechts (Thm.\ 2.5 in \cite{huy:kcone}) asserts that $f$ can arise as  the specialization for a situation  as in Proposition  \ref{prop:separation}, with $U$ the complex unit disk and $\Zcal$ being
over $U\ssm\{ 0\}$ the graph of an $(U\ssm\{ 0\})$-isomorphism. This implies  that $f$ determines an isotopy class $[f]$ of diffeomorphisms between the underlying manifolds of $X$ and $X'$ which induces on $H^2$ the above map: it makes $f$ a limit of graphs of diffeomorphisms of $X$ onto $X'$.
\end{remark}

\begin{question}\label{question:sep}
It is possible to  reconstruct $[f]$ from $f$ without recourse to one-parameter deformations as above? For example, if the underlying manifold of both $X$ and $X'$ is $M$, how can we tell whether $[f]$ is in the isotopy class of the identity?
\end{question}

\subsection*{Teichm\"uller spaces} 
We are now ready to give $\teich$ the structure of a possibly nonseparated complex manifold (and hence with a topology) by endowing $\teich$ with an atlas whose charts are of the following type. Given an open subset  $U$ of $\per$,  then let us agree that a \emph{basic chart for $\teich$ 
with domain} $U$  is given by a complex structure
on $M \times  U$ for which the resulting complex manifold  $\Xcal$ has the property that 
\begin{enumerate}
\item[(i)]  the projection $\Xcal\to U$ is holomorphic,
\item[(ii)] the fibers of  $\Xcal\to U$  are hyperk\"ahlerian  manifolds and
\item[(iii)] its period map is given by the inclusion of $U$ in $\per$.
\end{enumerate}
It is clear that such an object defines an injection of $U$  in $\teich$.  By the local Torelli theorem, every hyperk\"ahlerian complex structure on $M$  appears as a  member of such a family. In other words, the basic charts cover all of $\teich$. We give $\teich$ the quotient topology, that is, the finest
topology, for which all the basic charts are continuous. It follows from Proposition \ref{prop:separation} (with $\phi$ the identity and $f_o$ isotopic to the identity) that the locus where two basic charts  with domains $U$ and $U'$ of $\per$  agree,   is the complement of a \emph{closed} (analytic) subset of $U\cap U'$.  This implies that each basic chart is an open map. It is now obvious that our atlas is complex-analytic and that it gives 
$\teich$  the structure of a (non-separated) complex manifold for which $\Per$ is a local isomorphism.

This also suggests that we define \emph{separated Teichm\"uller space} 
$\teich_s$ as follows:  identify two members of our atlas with the same domain  if the hypotheses of Proposition \ref{prop:separation} are satisfied with $\phi$ the identity and $f_o$ isotopic to the identity. In other words, two hyperk\"ahlerian complex structures on $M$ which give complex
manifolds $X$ and $X'$, define the same point of $\teich_s$ if and only if there exist basic  charts $\Xcal/U$,  $\Xcal'/U$ containing $X$ resp.\   $X'$ over the same open subset $U\subset \per$, and a  sequence $(z_i\in U)_{i=1}^\infty$ converging to some $o\in U$ such that $X_{z_i}$ and $X'_{z_i}$ differ  by a $C^\infty$-isotopy and $X_o=X$ and $X'_o=X'$, or equivalently, if there exists a bimeromorphic equivalence $f:X\dashrightarrow X'$ whose associated isotopy class $[f]$ is  that of the identity of $M$ (this motivated our Question \ref{question:sep}).
The space $\teich_s$ is indeed a separated  complex manifold and  the period map factors through the \emph{separated period map}
\[
\Per_s: \teich_s\to \per,
\] 
which of course is still a local isomorphism. 

\begin{remark}[Comparison with Verbitsky's Teichm\"uller space]\label{rem:comparison}
Verbitsky defines in \cite{verbitsky} his  Teichm\"uller space $\Teich$ as the orbit space of the space of hyperk\"ahlerian structures on $M$ with respect to the action of the group of diffeomorphisms isotopic to the identity. Our atlas consists of certain sections to orbits, and so we have \emph{a priori} a map
$\teich\to\Teich$ for which $\Teich$ has the quotient topology.  Since this map is a bijection, it must be  homeomorphism. Def.\ 1.13 of \emph{op.\ cit.}\ introduces a notion of inseparability: two points of a topological space are said to be \emph{inseparable} if every neighborhood of one meets every neighborhood of the other. As Verbitsky observes, this  relation fails in general to be transitive, and  indeed, much work in \cite{verbitsky}  goes into proving  that this is an equivalence relation for $\Teich$ (as pointed out by Matthias Kreck, the proof is distributed all over the paper and seems entangled with the proof of the Torelli theorem). It is however clear from the preceding that for $\teich$ this is just the relation that says that the two points lie in the same fiber of  $\teich\to\teich_s$. Hence it is an equivalence relation, so that our $\teich\to\teich_s$ can be identified with Verbitsky's $\Teich\to\Teich_b$. 
\end{remark}

\subsection*{Other moduli spaces} There is a good reason to consider also two related Teichm\"uller spaces, if only to better understand the formation of the separated quotient above. One is the space $\teich_{\HK}$  of hyperk\"ahler   structures on $M$ given up to $C^\infty$-isotopy and with the metric given up to a scalar (or normalized such that $M$ has unit volume). In view
of the discussion above this amounts to specifying in addition a ray in $H_\RR$ (or rather, in  $H^{1,1}(X; \RR)$) spanned by a K\"ahler class. 
In particular, if $\per_{\HK}$ denotes the space of pairs $(z,r)$ with $z\in \Per$ and $r$ a ray in the positive cone defined by $z$, then in an evident manner we have defined a \emph{hyperk\"ahler period map} $\Per_{\HK}: \teich_{\HK}\to \per_{\HK}$.
Note that the projection $\per_{\HK}\to \per$ is a locally trivial fiber bundle with fibers having the structure of a hyperbolic $n$-space.

\begin{corollary}\label{cor:artincover}
The moduli space  $\teich_{\HK}$ is a separated manifold of dimension $3n+2$ such that $\Per_{\HK}$ is a local diffeomorphism.
The natural map $\pi_{\HK}:\teich_{\HK}\to \teich$ is open, with each fiber having  the structure of a convex open set in an $n$-dimensional hyperbolic space.
 \end{corollary}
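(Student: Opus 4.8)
The plan is to equip $\teich_{\HK}$ with charts in $\per_{\HK}$ in exact parallel with the atlas built on $\teich$, so that the manifold structure, the dimension, the fact that $\Per_{\HK}$ is a local diffeomorphism, openness of $\pi_{\HK}$, and the description of its fibres all follow by inspection; the one substantial point is that $\teich_{\HK}$ is \emph{separated}, and this I will extract from Proposition \ref{prop:separation}.

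\textbf{Charts and the formal assertions.} Write $p\colon\per_{\HK}\to\per$ for the projection. For a basic chart $\Xcal\to U$ of $\teich$ set $\Kcal(\Xcal/U):=\{(u,\rho)\in p^{-1}(U):\rho\text{ a K\"ahler ray on }X_u\}$. Since being a K\"ahler class is an open condition in a family of compact complex manifolds, $\Kcal(\Xcal/U)$ is open in $\per_{\HK}$, and it maps bijectively to the corresponding subset of $\teich_{\HK}$; two such sets overlap in $\teich_{\HK}$ precisely over the locus — open by Proposition \ref{prop:separation} — on which the two underlying basic charts of $\teich$ agree, so the transition maps are restrictions of $\mathrm{id}_{\per_{\HK}}$. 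As every hyperk\"ahlerian complex structure lies in a basic chart, these charts cover $\teich_{\HK}$ and make it a (possibly non-separated) manifold on which $\Per_{\HK}$ is a local diffeomorphism; as $p$ is a bundle with fibre a hyperbolic $n$-space and $\per$ has real dimension $2n+2$, this manifold has dimension $3n+2$. In a chart $\pi_{\HK}$ is the restriction of the bundle projection $p|_U$ to an open set, hence open; and the fibre of $\pi_{\HK}$ over $[X]$ is the image, in the fibre $\cong\Hyp^n$ of $p$ over $\Per(X)$, of the K\"ahler cone of $X$ — an open convex cone inside the positive cone of $H^{1,1}(X;\RR)$ — and the image of such a cone in hyperbolic space is open and geodesically convex, the geodesics being the images of the $2$-dimensional linear sections of the cone.

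\textbf{Separatedness.} Suppose $(X,r)$ and $(X',r')$ are inseparable points of $\teich_{\HK}$. As $\Per_{\HK}$ is continuous and $\per_{\HK}$ Hausdorff, $\Per_{\HK}(X,r)=\Per_{\HK}(X',r')$, i.e.\ $z:=\Per(X)=\Per(X')$ and $r=r'$; in particular $r$ is a K\"ahler ray on both $X$ and $X'$. Since $\pi_{\HK}$ is continuous, $X$ and $X'$ are inseparable in $\teich$, so, choosing basic charts $\pi\colon\Xcal\to U$ and $\pi'\colon\Xcal'\to U$ over a common ball $U\ni z$ with $X_z=X$, $X'_z=X'$, there is a sequence $w_i\to z$ in $U$ with $X_{w_i}\cong X'_{w_i}$ by an isotopy $f_i$ (which therefore acts as the identity on $H^2$). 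Fix $\kappa_0\in r$; it is a K\"ahler class on $X_z$ and on $X'_z$. Shrinking $U$, let $\kappa(u)$ be the $q$-orthogonal projection of $\kappa_0$ onto $H^{1,1}_u(\RR)\subseteq H_\RR$; this depends only on $u$, equals $\kappa_0$ at $z$, and by openness of the K\"ahler condition it is a continuous relative K\"ahler class for both $\pi$ and $\pi'$. Now $R^2\pi_*\ZZ$ and $R^2\pi'_*\ZZ$ are both the constant local system with fibre $H$ carrying at $u$ the Hodge structure $u$; take $\phi=\mathrm{id}$, which identifies $\kappa$ with $\kappa$ in every fibre, and take $o=w_i$ for $i$ large, so that $\phi_o$ is induced by $f_i$. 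Proposition \ref{prop:separation} then provides the proper (and, as its proof shows, surjective) morphism $\hat U\to U$ together with $\Zcal\subseteq\Xcal\times_{\hat U}\Xcal'$; moreover the portion of the proof of its clause (iii) showing that the exceptional set $K$ is empty applies verbatim, since that argument uses only that the chosen relative K\"ahler classes correspond under $\phi$ in every fibre — so that a fibre isomorphism inducing $\phi_u=\mathrm{id}$ is an isometry for the continuously varying Yau metrics in the classes $\kappa(u)$, and a limit of graphs of isometries is again, by completeness, the graph of one — not that those classes are parallel. Hence, over any point $\hat z$ of $\hat U$ lying over $z$, $Z_{\hat z}$ is the graph of an isomorphism $X\xrightarrow{\sim}X'$ isotopic to $f_i$, hence isotopic to $\mathrm{id}_M$. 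Thus $X$ and $X'$ represent the same point of $\teich$ and, $r$ and $r'$ agreeing, $(X,r)=(X',r')$; so $\teich_{\HK}$ is separated.

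\textbf{Where the difficulty lies.} Everything except separatedness is bookkeeping; within separatedness the delicate step is $K=\emptyset$, i.e.\ producing an honest isomorphism over $z$ rather than only over a dense open part of $U$. One cannot quote clause (iii) of Proposition \ref{prop:separation} directly, because over an open subset of $\per$ no nonzero parallel class is of type $(1,1)$ in every fibre, so there is no parallel relative K\"ahler class to feed it; the resolution is that the period-determined section $u\mapsto\kappa(u)$, although not parallel, is automatically compatible with $\phi=\mathrm{id}$, which is all the emptiness-of-$K$ argument requires.
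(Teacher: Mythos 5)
Your proposal is correct and follows the same basic route as the paper: the paper's own proof consists of citing part (iii) of Proposition \ref{prop:separation} for the first assertion and the classical Kodaira openness/convexity facts for the description of the fibres of $\pi_{\HK}$, which is exactly your chart construction and fibre discussion. Where you genuinely add something is in the separatedness step. You correctly observe that clause (iii) cannot be invoked verbatim for basic charts: over an open subset $U\subset\per$ no nonzero flat section of $R^2\pi_*\RR$ is of type $(1,1)$ in every fibre (such a class would be $q$-orthogonal to $\Pi_u$ for all $u$ in an open set, forcing it to vanish), so there is no parallel relative K\"ahler class to feed into the hypothesis of (iii). Your fix---rerunning the $K=\emptyset$ argument from the proof of (iii) with the continuously varying class $\kappa(u)$ obtained by $q$-orthogonally projecting a common K\"ahler class $\kappa_0\in r=r'$ into $\Pi_u^\perp$, after shrinking $U$ so that openness of the K\"ahler condition keeps $\kappa(u)$ K\"ahler for both families---is exactly what that argument needs, since only the continuity of the fibrewise K\"ahler--Einstein metrics and their compatibility with $\phi=\mathrm{id}$ enter, not flatness of the class; combined with part (i) it produces an isomorphism $X\cong X'$ isotopic to the identity, hence equality in $\teich_{\HK}$. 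Note that the alternative of literally meeting the hypothesis of (iii) by restricting the families to the hyperplane section of $\per$ on which $\kappa_0$ stays of type $(1,1)$ would not work here, because the approximating points $w_i$ carrying the isomorphisms $f_i$ need not lie on that section; so your elaboration is the right way to make the paper's one-line citation of (iii) precise, and the rest of your argument (dimension count, openness of $\pi_{\HK}$, convexity of the image of the K\"ahler cone in hyperbolic $n$-space) matches the paper's.
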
 
 \begin{proof}
The first assertion follows from Property (iii) of  Proposition  \ref{prop:separation}.  The openness and convexity properties are general facts, which hark back  to Kodaira.  In our case, the rays  in  the positive cone of $X$ make up a real hyperbolic space of dimension $n$, and so the space of rays spanned by 
K\"ahler classes make up an open convex subset this space.
\end{proof}

So the composite $\teich_{\HK}\to\teich\to \teich_s$ 
is a submersion of separable manifolds. Its fibers are disjoint unions of convex open sets in a hyperbolic  $n$-space  and the factorization can be understood as a  topological Stein factorization. Perhaps $\teich$ is best understood via the following characterization.

\begin{corollary}\label{cor:sectionchar}
A section of $\teich \to \teich_s$ over an open subset $U\subset \teich_s$ is given by a continuous section of 
$\teich_{\HK}\to\teich_s$ given up to homotopy.
\end{corollary}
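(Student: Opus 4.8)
The plan is to produce the equivalence from the structure theory already in place. Recall that $\teich_\HK \to \teich$ is, by Corollary \ref{cor:artincover}, an open map whose fibers are convex open subsets of a hyperbolic $n$-space, and that $\teich \to \teich_s$ is a submersion of manifolds whose fibers are the inseparability classes (Remark \ref{rem:comparison}). The composite $\teich_\HK \to \teich_s$ is then a submersion whose fiber over a point $[X] \in \teich_s$ is the disjoint union, over the points of the corresponding fiber of $\teich \to \teich_s$, of convex open subsets of hyperbolic $n$-space, exactly as noted just before the statement. So a point of $\teich$ lying over $[X]\in\teich_s$ is precisely a choice of one of the connected components of that fiber, i.e.\ of one of the convex pieces. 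This is the dictionary I would exploit.

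First I would pass from a continuous section $s\colon U \to \teich_\HK$ of $\teich_\HK \to \teich_s$ to the composite $\pi_\HK \circ s\colon U \to \teich$, which is a continuous section of $\teich \to \teich_s$ over $U$; this direction is immediate. For the converse and for well-definedness up to homotopy, I would argue fiberwise and then globalize. Over a point $u\in U$, a section $\sigma$ of $\teich \to \teich_s$ picks out a point $\sigma(u)\in\teich$ over $u$, hence a convex open subset $C_{\sigma(u)}$ of a hyperbolic $n$-space sitting inside the fiber of $\teich_\HK \to \teich_s$; choosing a point of $C_{\sigma(u)}$ lifts $\sigma(u)$ to $\teich_\HK$. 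The space of such choices is $C_{\sigma(u)}$ itself, which is convex and in particular contractible, so the space of lifts is nonempty and any two are canonically homotopic within the fiber. To make this into a global statement, I would note that $\pi_\HK\colon\teich_\HK\to\teich$ restricted over the image of $\sigma$ is a fiber bundle with convex (hence contractible) fibers over the locally contractible base $\sigma(U)\cong U$; such a bundle admits a section, and the space of sections is weakly contractible, which gives existence of the lift and uniqueness up to homotopy over all of $U$ at once. Composing a section of $\teich\to\teich_s$ with such a lift and then with $\pi_\HK$ recovers the original section, and composing a section of $\teich_\HK\to\teich_s$ with $\pi_\HK$ and lifting recovers the original up to homotopy; so the two constructions are mutually inverse on homotopy classes.

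The one point requiring genuine care—and the step I expect to be the main obstacle—is the assertion that $\pi_\HK\colon\teich_\HK \to \teich$ really is a fiber bundle with convex fibers over the relevant locally contractible subsets, rather than merely an open map with convex fibers. Corollary \ref{cor:artincover} gives openness and the convexity of individual fibers, but the local triviality needs the fibers to vary continuously as the base point moves inside a fixed inseparability class; concretely, for a basic chart $\Xcal/U$ the fiber of $\pi_\HK$ over $X_z$ is the set of rays spanned by K\"ahler classes of $X_z$ inside the fixed positive cone of $z$, and by the Demailly--P\u{a}un/Huybrechts description this K\"ahler cone depends on $z$ in a locally constant-in-a-stratified-sense way that is nonetheless continuous enough to trivialize the bundle over small contractible opens. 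I would establish this by invoking the deformation-invariance of the K\"ahler cone away from the walls together with properness of the relevant divisorial conditions, so that over a sufficiently small contractible neighborhood one has a genuine product. Once that is granted, the homotopy-theoretic bookkeeping above is routine.
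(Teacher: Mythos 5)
Your dictionary (compose with $\pi_\HK$ in one direction, lift through $\pi_\HK$ in the other, with uniqueness up to homotopy coming from contractibility of the fibers) is the right shape, but the step you yourself flag as the main obstacle is a genuine gap, and your proposed repair does not close it. You need $\pi_\HK\colon\teich_\HK\to\teich$ to be a locally trivial bundle over $\sigma(U)$, and you want to deduce this from ``deformation-invariance of the K\"ahler cone away from the walls''. The K\"ahler cone is precisely \emph{not} deformation invariant: by Propositions \ref{prop:kcone} and \ref{prop:walls} it jumps along the wall loci where some negative class becomes of type $(1,1)$ --- at a very general period point it is the whole positive cone (Proposition \ref{prop:cone}), on a wall it is strictly smaller. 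An arbitrary open $U\subset\teich_s$, hence $\sigma(U)$, will in general meet these walls, so ``away from the walls'' excludes exactly the locus where you need triviality, and nothing in the paper or in the results you invoke produces a fiber-preserving trivialization of $\pi_\HK$ across a wall (the fibers genuinely change there). A separate, smaller point you leave implicit: for the two constructions to be inverse on homotopy classes you must know that homotopic sections of $\teich_\HK\to\teich_s$ have the \emph{same} composite with $\pi_\HK$; this holds because $\Per$ is a local isomorphism, so the fibers of $\teich\to\teich_s$ are discrete and any path in such a fiber is constant --- it deserves a sentence.

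The fix is to drop local triviality altogether, which is how the paper argues. Local sections of $\pi_\HK$ over $\sigma(U)$ exist by the openness of the K\"ahler condition in families (the same classical fact invoked for Corollary \ref{cor:artincover}): given a K\"ahler ray at one fiber, its orthogonal projection into the nearby fibers of $\per_\HK\to\per$ remains a K\"ahler ray on a neighborhood. The fibers of $\pi_\HK$ are convex open subsets of hyperbolic $n$-space, and geodesic (convex) combination in these fibers varies continuously with the base point; hence local sections can be glued by a partition of unity, and the set of all continuous sections of $\teich_\HK\to\teich_s$ lying over a fixed section $\sigma$ of $\teich\to\teich_s$ is itself convex, hence canonically contractible once nonempty. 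This gives existence of a lift and its uniqueness up to homotopy in one stroke, which is exactly the content of the paper's one-line proof (``each homotopy class of sections has a natural convex structure, hence is canonically contractible''), with no fibration-theoretic input needed.
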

\begin{proof}
This is merely the observation that each homotopy class of sections over $U$ has a natural convex structure, hence is canonically contractible. \end{proof}

If we only retain (apart our initial data) the Einstein metric  (and so do not single out a complex structure for which the metric is K\"ahler), then we obtain another 
Teichm\"uller space  $\teich_{\HH}$~(\footnote{This subscript is intended to honor Hamilton and does not stand for anything \emph{hyper}.})  of 
Einstein metrics on $M$ for which $M$ has unit volume, again given up to isotopy.
We have a natural projection $\teich_{\HK}\to \teich_{\HH}$ and we give $\teich_{\HH}$ the quotient topology. The twistor construction makes it clear that the evident projection  $\teich_{\HK}\to \teich_{\HH}$ is a locally trivial $S^2$-bundle and  that the  `period map'
\[
\Per_{\HH}: \teich_{\HH}\to \Gr_3^+(H_\RR),
\]
which assigns to an Einstein metric $g$ on $M$ the subspace $P_{(M,g)}$, is a 
local diffeomorphism. Note that its target $\Gr_3^+(H_\RR)$ is the symmetric space of $\Orth (q_\RR)$, so that
the arithmetic  group $\Orth(q)$ acts properly discretely on it.

Summing up, we have a commutative diagram 
\begin{equation}\tag{$\dagger$}
\begin{CD}
\teich_\HH@<<< \teich_\HK@>>> \teich\\
@V{\Per_\HH}VV  @V{\Per_{\HK}}VV @V{\Per}VV\\
\Gr^+(H_\RR)@<<< \per_\HK @>>> \per
\end{CD}
\end{equation}
in which the horizontal maps are forgetful and the vertical ones are period maps. All spaces are manifolds, and all but $\teich$ are Hausdorff.
The left square is cartesian (the left pointing maps are 2-sphere bundles) and the right pointing maps have contractible fibers.

\section{A Torelli type theorem}
In this paper, we stipulate that the \emph{mapping class group} $\Mod (M)$ of $M$ is the connected component group of the group of diffeomorphisms of $M$ 
which preserve the initial data, i.e.,  the orientation on $M$ and the spin orientation on $H^2(M; \RR)$. It is clear that this group acts naturally  on all the Teichm\"uller spaces that we introduced. 

Let $\rho$ be the representation of $\Mod (M)$ on $H^2(M)$ and denote 
by $\G_M\subset \GL(H)$ its image.  It is clear that $\Gamma_M\subset \Orth(q)$, but with our definition of $\Mod (M)$  we land in fact in an index $2$ subgroup of $\Orth(q)$, namely the kernel $\Orth^{\#}(-q)$  of the spinor norm for $-q$. As Verbitsky had noticed, a theorem of Sullivan implies  that  $\Gamma_M$ is of finite index in $\Orth(q)$.

\begin{theorem}[A Torelli theorem for hyperk\"ahlerian manifolds]\label{thm:main}
The period map $\Per_s: \teich_s\to \per$ maps every connected component of $\teich_s$ isomorphically onto $\per$.
In particular, the $\Mod(M)$-stabilizer of a component acts with finite kernel on $H^2(M; \ZZ)$.
\end{theorem}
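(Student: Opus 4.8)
The plan is to prove that $\Per_s$ restricted to a connected component is a covering map onto $\per$, and then to exploit the fact that $\per$ is simply connected. We already know from the construction of the complex-analytic atlas that $\Per_s$ is a local isomorphism (in particular a local homeomorphism, open and continuous). So the crux is properness in the appropriate sense: establishing that $\Per_s$ restricted to a component $T_\alpha$ of $\teich_s$ has the path-lifting (or, better, the full covering) property. The natural tool is the twistor conics: by Lemma \ref{lemma:killing}, every positive $3$-plane $P\subset H_\RR$ gives a twistor conic $\per(P)\subset\per$ together with a canonical lift $\per(P)\to\teich_s$ (the twistor family associated to the Einstein metric whose $3$-plane is $P$), and this lift is a section of $\Per_s$ over $\per(P)$. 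Since any two points of $\per$ can be joined by a chain of twistor conics (this is the classical connectedness-of-twistor-conics input, available because positive $3$-planes through a given positive line, and then through a second chosen point, can be strung together inside the Grassmannian $\Gr_3^+(H_\RR)$), one gets that $\Per_s$ is surjective on each component and, with a bit more care, that local lifts can be propagated along such chains.

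The cleaner route, which avoids delicate chain-of-conics bookkeeping, is to compare with $\teich_\HH$. First I would show that $\teich_\HH$ is connected: this follows because $\Per_\HH:\teich_\HH\to\Gr_3^+(H_\RR)$ is a local diffeomorphism onto a \emph{contractible} (in fact simply connected) target, and one checks via the twistor families and Proposition \ref{prop:separation}(iii) that $\Per_\HH$ is a covering onto its image; since the image is open and (by the properness argument coming from Fujiki's theorem, as used in the proof of Proposition \ref{prop:separation}) closed, it is all of $\Gr_3^+(H_\RR)$, whence $\teich_\HH\cong\Gr_3^+(H_\RR)$ on each component and the component is unique. Then, by the cartesian left square in diagram $(\dagger)$, $\teich_\HK\to\teich_\HH$ is an $S^2$-bundle over a connected simply connected base, hence $\teich_\HK$ is connected; and $\pi_\HK:\teich_\HK\to\teich$ is surjective with connected (convex) fibers, so $\teich$ is connected, and therefore so is $\teich_s$. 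Now $\Per_s:\teich_s\to\per$ is a local isomorphism from a connected space to a simply connected space, and the same Fujiki-properness argument that proves Proposition \ref{prop:separation}(iii) upgrades it to a covering map; a connected covering of a simply connected space is an isomorphism.

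Concretely, to see $\Per_s$ is a covering: given $z_0\in\per$ and a point $t_0\in\teich_s$ over it, pick an Einstein metric realizing $t_0$, giving a twistor conic through $z_0$ lifted into $\teich_s$; the union of all such twistor-conic lifts through points already known to be in the image, together with the open-ness of $\Per_s$, shows the image is open, while Fujiki's properness theorem (applied as in Proposition \ref{prop:separation}) shows that the relevant Douady-space projection is proper, so limits of lifted points lift, giving closedness of the image and evenly-covered neighborhoods. Since $\per$ is connected, $\Per_s$ is surjective on each component; since $\per$ is simply connected, each component maps isomorphically.

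The hard part is the properness input that turns ``local isomorphism'' into ``covering map.'' This is exactly where the counterexamples of Kreck--Su bite if one is careless: one must work with $\teich_s$ (the separated quotient) and not $\teich$, and one must feed in Fujiki's properness theorem for weakly Kähler morphisms via the relative Douady space, precisely as in the proof of Proposition \ref{prop:separation}. Once that is in place, the simple-connectivity of $\per$ and the connectivity propagation through diagram $(\dagger)$ do the rest, and the final clause about the $\Mod(M)$-stabilizer acting with finite kernel on $H^2$ follows because the stabilizer of a component acts on that component compatibly with its isomorphism onto $\per$, hence faithfully modulo the kernel of $\rho$, while $\G_M$ has finite index in $\Orth(q)$ by Sullivan's theorem (so the kernel of $\rho$ restricted to the stabilizer, being the part acting trivially on both $\per$ and $H^2$, is finite by the finiteness of $\aut_0$ in Proposition \ref{prop:separation}(iv)).
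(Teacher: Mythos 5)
Your proposal breaks at the two places where the real work has to happen. First, it is not true that ``every positive $3$-plane $P\subset H_\RR$ gives a twistor conic $\per(P)$ together with a canonical lift $\per(P)\to\teich_s$'': a lift over the whole conic exists only when $P=P_N$ for an actual Einstein metric $N$ on $M$, i.e.\ when $P$ is spanned by the period plane of a complex structure in the given component together with a K\"ahler class, and the image of $\Per_\HH$ misses the wall loci $\Gr^+_3(H_{\delta,\RR})$, $\delta\in\Delta_\Ccal$ (this is exactly the content of Propositions \ref{prop:kcone} and \ref{prop:walls} and the corollary following them). Consequently your ``cleaner route'' collapses: $\Per_\HH$ is \emph{not} surjective onto $\Gr^+_3(H_\RR)$ in general (its image is open but not closed, being the complement of a nonempty locally finite arrangement), and the conclusion you draw from it---that $\teich_\HH$, hence $\teich_\HK$, $\teich$ and $\teich_s$, are connected---directly contradicts the Kreck--Su phenomenon recalled in Remark \ref{rem:ky} ($\teich$ has infinitely many components for Kummer $4$-folds); the theorem is precisely a statement about each component, not about connectedness. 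The missing idea is the one the paper actually uses: Huybrechts' theorem (Proposition \ref{prop:cone}, resting on Demailly--P\u{a}un) guarantees that for a \emph{fully irrational} positive $3$-plane the entire positive cone consists of K\"ahler classes, so that whole twistor conics over such planes do lift (Corollary \ref{cor:twistorline}); your argument never invokes this, and without it no conic is known to lift beyond the point you start from.

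Second, the properness input you appeal to does not do what you claim. Fujiki's theorem enters the paper only inside the proof of Proposition \ref{prop:separation}, to make the relative Douady space proper over $U$; that yields the separation/gluing statements used to build $\teich$ and $\teich_s$, but it gives no properness of $\Per_s$ itself, no closedness of its image, and no ``evenly covered neighborhoods.'' The paper converts the local isomorphism into a covering by Lemma \ref{lemma:main}: fix a ball $U\cong B_{<1}\subset\CC^{n+1}$, let $r$ be the supremal radius over which a section through $t$ exists, and extend the section across each boundary point $z\in\partial B_r$ by choosing a fully irrational positive line $\ell\subset\Pi_z^\perp$ with $\per(\ell+\Pi_z)$ transversal to $\partial B_r$, lifting that conic via Corollary \ref{cor:twistorline}, and gluing by uniqueness of sections on connected sets (which uses separatedness of $\teich_s$). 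That exhaustion argument, together with the genericity (density of fully irrational planes) and transversality choices, is the actual substitute for the ``chain of twistor conics'' and for your unproved properness step; your sketch contains neither it nor an equivalent. The final clause about the $\Mod(M)$-stabilizer is essentially fine once the isomorphism onto $\per$ is in hand (an element acting trivially on $H^2$ acts trivially on $\per$, hence on the component, and finiteness comes from the finiteness of $\aut_0$ as in Proposition \ref{prop:separation}(iv)); the appeal to Sullivan's finite-index theorem is not needed there.
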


\begin{remark}\label{rem:ky}
Verbitsky \cite{verbitsky} claimed in addition that  $\Per_s$ is a finite covering, but as Kreck and Yang Su \cite{kreck-su} have shown, this is not always the case. 
They invoke a general theorem of Sullivan, which implies  that the kernel of the representation of  $\Mod (M)$ on the full cohomology $H^\pt (M; \ZZ)$ has a finitely 
generated torsion free unipotent group as a subgroup of finite index (provided that $m\ge 2$), and  show that this unipotent group is nontrivial when $M$ is 
the manifold underlying  a Kummer 4-fold  (a projective Kummer $2m$-fold is obtained by taking  for an abelian surface the Hilbert scheme of its length $(m+1)$-subschemes  
of which  the image under the sum map is the origin). So $\teich$ has then infinitely many connected components and there exist elements in $\Ker(\rho)$ 
of which no nontrivial power can appear in the monodromy group of a connected (holomorphic) family of hyperk\"ahler manifolds. 
\end{remark}

Theorem \ref{thm:main} can be considered as a global Torelli theorem for a single component of the Teichm\"uller space of $M$. 
A (weak) version of a global  Torelli theorem for the full Teichm\"uller space is then obtained by combining it 
with a finiteness result of  Huybrechts \cite{huy:finite}, which implies that $\Ker(\rho)$ acts properly on 
the connected component set $\pi_0\teich$ of $\teich$  and has in $\pi_0\teich$ only finitely many orbits. 
The  orbit space $\Ms:=\Ker(\rho)\backslash \teich$ may be considered as a moduli space of  \emph{marked} hyperk\"ahlerian manifolds whose
separated quotient $\Ms_s$ is just $\Ker(\rho)\backslash \teich_s$. The period map clearly factors through  a morphism
 $\Ms_s\to\per$ and the finiteness property and the Torelli theorem above imply  that this is a \emph{finite} (trivial) covering map. By construction, this covering map comes with a faithful action  of $\Gamma_M$. It also follows  that the image under $\rho$ of the $\Mod(M)$-stabilizer of a connected component of $\teich$ is of finite index in $\Gamma_M$ and is hence of finite index in $\Orth(q)$.  We thus find:

\begin{corollary}[A weak global Torelli theorem]\label{cor:weaktorelli}
The set of hyperk\"ahlerian complex structures on $M$  with a prescribed Hodge structure on $H^2(M; \ZZ)$ is nonempty  and decomposes into a finite number of complete bimeromorphic equivalence classes. $\square$
\end{corollary}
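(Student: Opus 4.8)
The plan is to deduce Corollary \ref{cor:weaktorelli} as a direct packaging of Theorem \ref{thm:main} together with Huybrechts' finiteness result \cite{huy:finite}, exactly along the lines already sketched in the paragraph preceding the statement. First I would fix a Hodge structure on $H^2(M;\ZZ)$, i.e.\ a point $z\in\per$, and recall from the construction of the Teichm\"uller space that the fiber $\Per^{-1}(z)\subset\teich$ is nonempty: by the surjectivity built into Theorem \ref{thm:main} (each component of $\teich_s$ maps \emph{onto} $\per$) there is at least one point of $\teich_s$ over $z$, and since $\teich\to\teich_s$ is surjective, at least one hyperk\"ahlerian complex structure on $M$ realizes $z$. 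This gives the nonemptiness claim.

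Next I would identify the set of such structures up to bimeromorphic equivalence with $\rho$-equivalence classes. The point is that two hyperk\"ahlerian complex structures $X$, $X'$ on $M$ with the same Hodge structure on $H^2$ give the same point of $\teich_s$ precisely when there is a bimeromorphic equivalence $f:X\dashrightarrow X'$ with $[f]$ the identity isotopy class (this is the characterization of $\teich_s$ recorded just after Remark \ref{rem:comparison}, resting on Remark \ref{rem:bimer} and Proposition \ref{prop:separation}); more generally, $X$ and $X'$ are bimeromorphically equivalent iff their points of $\teich$ lie in the same $\Ker(\rho)$-orbit, i.e.\ the same point of $\Ms=\Ker(\rho)\bs\teich$ after passing to the separated quotient, the same point of $\Ms_s=\Ker(\rho)\bs\teich_s$. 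So the ``complete bimeromorphic equivalence classes'' of structures realizing $z$ are in bijection with the fiber of $\Ms_s\to\per$ over $z$, and each such class is ``complete'' because within a fiber of $\teich\to\teich_s$ the members are all bimeromorphic (Remark \ref{rem:bimer}).

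Then I would invoke Huybrechts' finiteness theorem \cite{huy:finite}: $\Ker(\rho)$ acts properly on $\pi_0\teich$ with finitely many orbits. Combined with Theorem \ref{thm:main}, which says each component of $\teich_s$ maps isomorphically onto $\per$, this forces $\Ms_s\to\per$ to be a finite (indeed, a finite trivial) covering. Hence the fiber over $z$ is a finite set, which is exactly the statement that the structures realizing $z$ fall into finitely many bimeromorphic equivalence classes.

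The main obstacle is not any new computation but the careful bookkeeping of the three notions in play: the isotopy class $[f]$ attached to a bimeromorphic map (Remark \ref{rem:bimer}), the relation cut out by the separated quotient $\teich\to\teich_s$, and the $\Ker(\rho)$-action on $\pi_0\teich$. One must check that a bimeromorphic equivalence between structures with the \emph{same} integral Hodge structure on $H^2$ induces, via $[f]$, an element of $\Mod(M)$ acting trivially on $H^2$---which is where the hypothesis that the Hodge structures literally agree (rather than merely being abstractly isomorphic) gets used---so that passing to $\Ms$ does identify exactly the bimeromorphic equivalence classes. Once that is in place the corollary is formal; everything substantive has been absorbed into Theorem \ref{thm:main} and the cited finiteness result, so the proof can be left as the one-line remark ``$\square$'' the author has already written.
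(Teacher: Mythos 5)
Your argument is essentially the paper's own proof, which is the paragraph immediately preceding the statement: combine Theorem \ref{thm:main} with Huybrechts' finiteness result \cite{huy:finite} to conclude that $\Ms_s=\Ker(\rho)\backslash\teich_s\to\per$ is a finite (trivial) covering, whence nonemptiness and finiteness of the fiber over the prescribed Hodge structure. One small overstatement in your bookkeeping: a bimeromorphic equivalence between two structures with the same Hodge structure $z$ need only induce a Hodge isometry of $(H,z)$, not the identity on $H^2$, so the fiber of $\Ms_s\to\per$ over $z$ surjects onto (rather than bijects with) the set of bimeromorphic equivalence classes---but a surjection from a finite set is all the corollary needs, so your conclusion stands.
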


\begin{problem}\label{problem}
Find an invariant for Einstein metrics on $M$ that is locally constant under deformation and which separates the connected components of $\teich_{\HH}$, at least up to finite ambiguity. 
\end{problem} 

\begin{remark}
It often happens that the action of $\Mod(M)$ on $\pi_0(\teich)$ is transitive so that $\Ms$ is connected. This is for example so for $K3$ surfaces: by a famous result of 
Piatetski-Shapiro and  Shafarevich, a $K3$ surface which contains $16$ pairwise  disjoint smooth rational curves is a (resolved) Kummer surface  and such surfaces are dense in $\teich(K3)$. On the other hand these resolved Kummer surfaces can be parametrized by the (connected) space
of  complex structures on the $4$-torus $\RR^4/\ZZ^4$, i.e., $\SL_4(\RR)/\Un_2$. It  seems unknown whether $\teich(K3)$ itself is connected, or what amounts to the same, whether $\Mod(K3)$ acts  faithfully on the second cohomology. (This in contrast to its  topological counterpart: for every closed simply connected $4$-manifold $S$, a self-homeomorphism of $S$ which acts trivially on the homology is by a theorem of Kreck \cite{kreck} pseudo-isotopic to the identity and then
a theorem of Perron \cite{perron} implies that such a homeomorphism  is isotopic to the identity through homeomorphisms.)

A similar argument might apply to the case when $M$ underlies the Hilbert scheme of subschemes of a $K3$ surface of length $m$ or underlies a Kummer 
$(2m)$-fold, so that $\Ms$ is then also connected. This is perhaps the reason that some authors prefer to work with moduli spaces of \emph{marked} 
hyperk\"ahlerian manifolds  instead of Teichm\"uller spaces. This amounts to starting with an abstract lattice $\Lambda$ endowed with a nondegenerate 
quadratic form $q_\Lambda$ that is isometric to $H$ and to consider hyperk\"ahlerian manifolds $X$ endowed with an isomorphism of lattices $H^2(X; \ZZ)\cong \Lambda$. 
But this approach can be too coarse in that it ignores too much of the underlying topology.
For example, such a moduli space has at least as many components as the  index of  $\Gamma_M$ in $\Orth(q_\Lambda)$ and this is at least two (due to the spinor orientation).
But even if we specify such an orientation for $q_\Lambda$ and demand that the marking respects spinor orientation, then $\Gamma_M$ need not map onto $\Orth^{\#}(-q_\Lambda)$:  there might exist  two hyperk\"ahlerian structures on $M$, such that an isometry of  their Hodge structures is not induced by a 
bimeromorphic equivalence. Such phenomena  are sometimes eliminated by insisting that the Hodge isometry preserves a bit more than what meets 
the eye,  the `more' being of a topological nature. It  would then be  somewhat misleading  to claim that therefore the Torelli theorem fails. 
For example, Markman proved in \cite{markman:monodromy}  that if $M$ underlies the Hilbert scheme of length $m$ subschemes of a $K3$ surface,  
then  $\Gamma_M$  is a proper subgroup of $\Orth^{\#}(-q)$ unless $m-1$ is a prime power and so then  
such additional components will come up. But as this is merely a reflection  of a topological property of $M$, namely that $\Mod (M)$ must preserve 
certain invariants that are defined on a finite quotient of $H^2(M; \ZZ)$, these connected components are in a sense extraneous---or put differently, some markings are better than others.
\end{remark}

Since $\per$ is simply connected, Theorem \ref{thm:main} is equivalent to saying that $\Per_s$ is a covering map. This is in fact what we will prove and  indeed, it is  implied by: 

\begin{lemma}\label{lemma:main}
Let  $U\subset \per$ be a  neighborhood of $\Per(t)$ isomorphic to the open unit ball in $\CC^{n+1}$. If $t\in \teich_s$  lies over the center of this ball, 
then there exists a unique section $\sigma$ of $\Per$ over $U$ which takes $\Per_s(t)$ to $t$.
\end{lemma}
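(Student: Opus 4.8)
The plan is to exhibit the section $\sigma$ as a twistor-conic extension argument dressed up via Proposition \ref{prop:separation}, exactly mirroring the classical Burns--Rapoport-style chain-of-conics technique but with the ``chain'' collapsed to a single step. First I would pick any hyperk\"ahlerian complex structure $X$ on $M$ representing the point $t$, endow it with its K\"ahler--Einstein metric, and form the associated Riemann manifold $N$. By Lemma \ref{lemma:killing} the twistor conic $\per(P_N)=\per\cap\PP(P_{N,\CC})$ is a compact subvariety of $\per$ passing through $\Per_s(t)$, and the twistor family $\Xcal_N\to\per(P_N)$ provides, after composing the tautological map with $\Per$, a section of $\Per$ over this conic hitting $t$. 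The key geometric input is that through the center $z_0:=\Per_s(t)$ of the ball $U$ there pass \emph{many} twistor conics: concretely, the positive $3$-planes $P\subset H_\RR$ containing the oriented positive $2$-plane $\Pi_{z_0}$ form a hyperbolic $n$-space worth of choices (this is the fiber of $\per_\HK\to\per$ from Corollary \ref{cor:artincover}, together with the freedom in the Einstein metric over $X$), and each such $P$ with a compatible orientation gives a twistor conic through $z_0$. The union of the portions of these conics lying in $U$ is an open neighborhood of $z_0$ in $U$: indeed the tangent directions to these conics at $z_0$ sweep out all of $T_{z_0}\per$, since $T_{z_0}\per(P)$ ranges over all $h_q$-isotropic lines as $P$ varies, and $\per$ being a quadric, short arcs of conics through a point cover a full neighborhood.

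Next I would define $\sigma$ on $U$ point by point. Given $u\in U$, choose a twistor conic $C$ through $z_0$ and through $u$ (or at least with $C\cap U$ connecting $z_0$ to $u$ — if $u$ is far, one may need a short chain of such conics inside $U$, but since $U$ is a ball I expect a \emph{single} conic to suffice after shrinking, or at worst finitely many); on $C\cap U$ the twistor family over that conic, normalized so that its fiber over $z_0$ is $X$ (possible by the isometry-rigidity built into Proposition \ref{prop:separation}(iii), since over the intersection of two twistor conics through $z_0$ the fibers carry K\"ahler classes that can be matched), gives a local section of $\Per$ agreeing with $t$ at $z_0$. Set $\sigma(u)$ to be the value of this section at $u$. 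Well-definedness — independence of the chosen conic — is where Proposition \ref{prop:separation} does the real work: if two twistor families over two conics through $z_0$ agree at $z_0$, then over the (at least $1$-dimensional) intersection locus of the conics they induce the same variation of Hodge structure on $H$ and agree at the base point, so by part (iii) (one must verify a matching K\"ahler class is available, which holds because the positive cones vary continuously) they are isomorphic over a neighborhood of $z_0$ in $U$; analytic continuation of this fiberwise isomorphism along $U$ — using that $\teich$ is a (possibly non-separated) manifold with $\Per$ a local isomorphism, so sections of $\Per$ that agree near a point agree on the connected locus where both are defined — forces the two local sections to coincide wherever both exist. Running this over all pairs of conics covering $U$ patches the local sections into a global continuous (hence holomorphic, as $\Per$ is a local isomorphism) section $\sigma$ on $U$.

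Uniqueness is immediate: any two sections of $\Per$ over the connected set $U$ taking the same value at $z_0$ agree on an open-closed subset of $U$, since $\Per$ is a local isomorphism of complex manifolds (the agreement locus is open because two local sections through a common point coincide locally, and closed because $\Per$ separates points locally); hence they are equal, and in particular $\sigma$ is the unique section with $\sigma(z_0)=t$. Note this simultaneously shows $\Per_s$ is a local homeomorphism with the path-lifting property along these conic arcs, which is the covering-map statement that yields Theorem \ref{thm:main} once one observes $\per$ is simply connected.

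The main obstacle I anticipate is the well-definedness step — specifically, making sure that when I glue along the intersection of two twistor conics through $z_0$, the hypotheses of Proposition \ref{prop:separation}(iii) are genuinely met, i.e.\ that the two twistor families can be arranged to carry sections of $R^2\pi_*\RR$ restricting to K\"ahler classes in every fiber \emph{and} agreeing at $z_0$ under the Hodge isometry. This requires knowing that the positive cone (the K\"ahler cone is open in it) moves continuously and that the chosen K\"ahler class at $z_0$ — say the one coming from the Einstein metric on $X$ — stays K\"ahler in nearby fibers of both families; this is exactly the continuity-of-the-K\"ahler-cone input from Huybrechts' work that underlies Corollary \ref{cor:artincover}, so it is available, but it is the crux. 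A secondary, more bookkeeping obstacle is ensuring one truly covers $U$ by (chains of) conic arcs through $z_0$ staying inside $U$; since $U$ is a ball and $\per$ a homogeneous quadric this is elementary, but it must be stated carefully.
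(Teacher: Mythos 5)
Your construction breaks down at its first geometric step. The twistor conics through $z_0=\Per_s(t)$ attached to the complex structure $X$ are the conics $\per(\Pi_{z_0}+\RR\kappa)$ with $\kappa$ a K\"ahler class of $X$; the tangent line at $z_0$ of such a conic is $\Hom(H^{2,0}_{z_0},\CC\kappa)$ with $\kappa$ \emph{real}. As $\kappa$ varies these lines form a totally real, positive--codimension family in $T_{z_0}\per\cong\Hom(H^{2,0}_{z_0},H^{1,1}_{z_0})$, not all of it: the union of all twistor conics through $z_0$ is a real submanifold of dimension about $n+2$ inside the $(2n+2)$--real--dimensional $\per$, so it contains no neighborhood of $z_0$. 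Likewise a generic $u\in U$ lies on \emph{no} twistor conic through $z_0$ (that would force $\dim(\Pi_{z_0}+\Pi_u)\le 3$). So you are driven back to chains of conic arcs, and then the ``well-definedness'' of the pointwise definition of $\sigma$ becomes precisely the monodromy problem of the classical chain-of-conics method, which Proposition \ref{prop:separation}(iii) by itself does not settle. Two further points are genuinely wrong or missing. First, your continuation and uniqueness arguments invoke the principle that sections of $\Per$ agreeing at one point agree on their connected common domain; for the non-separated space $\teich$ this is false (that is exactly what non-separated points produce), and the coincidence locus need not even be closed. The argument is only valid for $\Per_s:\teich_s\to\per$, a local homeomorphism between \emph{separated} spaces, and the lemma must be read there. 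Second, you never use Huybrechts' Proposition \ref{prop:cone}, i.e.\ Corollary \ref{cor:twistorline}: without choosing the $3$-planes \emph{fully irrational}, you cannot know that the component of $\Per_s^{-1}$ of a twistor conic containing your partial section maps isomorphically onto the whole conic, so even along a single conic you cannot extend or compare sections --- the existing local section could a priori sit in a component that does not cover the conic.

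The paper's proof is organized quite differently and avoids chains altogether: one takes the supremum $r$ of radii of concentric balls over which a section of $\Per_s$ through $t$ exists, and assuming $r<1$, extends the section across each boundary point $z\in\partial B_r$ by choosing a \emph{fully irrational} positive $3$-plane $\ell+\Pi_z$ whose conic is transversal to $\partial B_r$ at $z$; Corollary \ref{cor:twistorline} lets the section be continued across $z$ along that conic, and the local extensions are glued using only the connectedness of intersections of balls and the separatedness of $\teich_s$, yielding a section on a strictly larger ball and hence a contradiction. You would need to incorporate the full-irrationality/Huybrechts input and the passage to $\teich_s$, and replace the ``conics through the center cover $U$'' step, before your outline can be made into a proof.
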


The proof of this lemma  involves little  more than twistor deformations and  the following theorem of Huybrechts (that is  based on work of Demailly-P\u{a}un \cite{dp}) which ensures that there are enough of these.

\begin{proposition}[Huybrechts \cite{huy:basic}]\label{prop:cone}
Let $X$ be a hyperk\"ahlerian manifold  for which $H^{1,1}(X)\cap H^2(X; \ZZ)=\{0\}$ (in other words, if $\Pi_{z}^\perp\cap H=\{ 0\}$). Then every element of the positive cone of $X$ represents a K\"ahler class. $\square$
\end{proposition}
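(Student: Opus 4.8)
The plan is to derive the statement from the Demailly--P\u{a}un theorem \cite{dp}, together with the Fujiki relation $\int_X\alpha^{2m}=c\,q(\alpha)^m$ (with $c>0$), by reducing everything to the fact that a hyperk\"ahlerian manifold $X$ with $\operatorname{NS}(X):=H^{1,1}(X)\cap H^2(X;\ZZ)=0$ admits no proper positive-dimensional analytic subvariety at all. Recall that, $X$ being K\"ahler (as it is hyperk\"ahlerian), Demailly--P\u{a}un tells us that the K\"ahler cone $\Kcal_X$ is one of the connected components of the open cone
\[
\mathcal{P}_X=\{\,\alpha\in H^{1,1}(X;\RR)\mid \int_Y\alpha^{\dim Y}>0\ \text{ for every positive-dimensional irreducible analytic subset }Y\subseteq X\,\}.
\]

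The first step is to kill the constraints attached to the \emph{proper} subvarieties. Let $Y\subset X$ be irreducible analytic of dimension $p$ with $1\le p\le 2m-1$, and fix a K\"ahler class $\omega$. Consider the linear functional
\[
\ell\colon H^2(X;\QQ)\to\QQ,\qquad \ell(\beta)=\int_Y\beta\cup\omega^{p-1}=\int_X\beta\cup\omega^{p-1}\cup[Y].
\]
Its complexification vanishes on $H^{2,0}(X)\oplus H^{0,2}(X)$ for type reasons: a $(2,0)$- or $(0,2)$-class cupped with $\omega^{p-1}$ restricts to a form of type $(p+1,p-1)$ or $(p-1,p+1)$ on $Y$, which is zero since $\dim_\CC Y=p$. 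As $q_\QQ$ is nondegenerate we may write $\ell=q(\,\cdot\,,v)$ for a unique $v\in H^2(X;\QQ)$ (using the bilinear form associated to $q$); the preceding sentence then forces $v_\CC$ to be $q_\CC$-orthogonal to $H^{2,0}(X)\oplus H^{0,2}(X)$, i.e.\ to lie in $H^{1,1}(X)$. Thus $v\in H^{1,1}(X)\cap H^2(X;\QQ)=\operatorname{NS}(X)_\QQ=0$, so $\ell\equiv0$; but $\ell(\omega)=\int_Y\omega^p>0$, a contradiction. Hence no such $Y$ exists, and the conditions defining $\mathcal{P}_X$ collapse to the single one for $Y=X$, namely $\int_X\alpha^{2m}>0$, equivalently (by Fujiki) $q(\alpha)>0$.

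It remains to identify the correct component. The form $q$ has signature $(1,n)$ on $H^{1,1}(X;\RR)$, with $n=b_2(M)-3$, so $\{\alpha\in H^{1,1}(X;\RR):q(\alpha)>0\}$ has exactly two (convex) connected components, $\Ccal_X$ and $-\Ccal_X$, the first being by definition the positive cone of $X$, namely the one containing the K\"ahler classes (each of which satisfies $q>0$, again by Fujiki). Hence $\mathcal{P}_X=\Ccal_X\sqcup(-\Ccal_X)$; since $\Kcal_X$ is connected and meets $\Ccal_X$ it is contained in $\Ccal_X$, and since Demailly--P\u{a}un forces $\Kcal_X$ to be a whole component of $\mathcal{P}_X$, we conclude $\Kcal_X=\Ccal_X$, which is the assertion. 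The only load-bearing point --- the sole place where the hyperk\"ahler geometry and the hypothesis $\operatorname{NS}(X)=0$ genuinely enter --- is the nonexistence of proper subvarieties, which rests on the cycle class of a hypothetical $Y$ being pinned down through the Beauville--Bogomolov form and so forced into $\operatorname{NS}(X)_\QQ$; granting the Demailly--P\u{a}un theorem and the Fujiki relation, the remainder is formal.
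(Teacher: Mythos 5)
Your overall strategy (Demailly--P\u{a}un plus identification of the right component) is the standard one --- the paper itself offers no proof, quoting the statement from Huybrechts --- but the step on which you make everything rest is where the argument breaks. For a subvariety $Y$ of dimension $p$ with $2\le p\le 2m-2$, the functional $\ell(\beta)=\int_Y\beta\cup\omega^{p-1}$ is \emph{not} a $\QQ$-valued functional on $H^2(X;\QQ)$: the class $[Y]$ is rational, but $\omega^{p-1}$ is only a real $(1,1)$-class, and under the hypothesis $H^{1,1}(X)\cap H^2(X;\ZZ)=0$ there is no rational K\"ahler class you could substitute for it. Hence the $q$-dual vector $v$ with $\ell=q(\cdot,v)$ is merely an element of $H^{1,1}(X;\RR)$; it has no reason to lie in $H^{1,1}(X)\cap H^2(X;\QQ)$, and no contradiction with $\operatorname{NS}(X)=0$ arises. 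Your argument is valid only for $p=1$ (no factor of $\omega$ occurs, so $\ell$ is rational and rules out curves), and divisors can be excluded directly since $[Y]$ itself would be an integral $(1,1)$-class; everything in between is untouched. Worse, the intermediate claim you reduce to is false in general: fixed loci of hyperk\"ahler isometries are trianalytic, and (as in the Kaledin--Verbitsky examples on generalized Kummer varieties, e.g.\ the surface swept out by cycles $\{x,-x,0\}$ in $K_2(T)$) such subvarieties remain complex-analytic in \emph{every} fiber of the twistor family, including the generic fibers, which have $H^{1,1}\cap H^2(\ZZ)=0$ when $T$ and the K\"ahler class are chosen generically. So a manifold satisfying the hypothesis of Proposition \ref{prop:cone} may well contain proper subvarieties of intermediate dimension; the correct proof does not exclude them but shows that for such $Y$ the inequality $\int_Y\alpha^p>0$ holds automatically for all $\alpha$ in the positive cone (for trianalytic $Y$ this integral is essentially a positive multiple of $q(\alpha)^{p/2}$), so that $\mathcal{C}_X\subset\mathcal{P}_X$ and your concluding component argument can then be run.

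Two further points. First, a smaller slip: by the Fujiki relation $\int_X\alpha^{2m}=c\,q(\alpha)^m$, the condition $\int_X\alpha^{2m}>0$ is equivalent to $q(\alpha)>0$ only for $m$ odd; for $m$ even it only gives $q(\alpha)\neq0$, so $\mathcal{P}_X\cap H^{1,1}(X;\RR)$ may have a third component where $q<0$. This does not damage the identification of the component containing the K\"ahler classes with $\mathcal{C}_X$, but the sentence as written is wrong. Second, if you want a short proof using only results quoted in this paper, keep the one case of your rationality argument that is sound: if $H^{1,1}(X)\cap H^2(X;\ZZ)=0$ then $X$ contains no rational curves (the functional $\beta\mapsto\int_C\beta$ would produce a nonzero rational $(1,1)$-class), and then Boucksom's description of the K\"ahler cone (Proposition \ref{prop:kcone}) says that inside the positive cone there are no cutting conditions left, i.e.\ $\mathcal{K}_X=\mathcal{C}_X$; this route never has to confront higher-dimensional subvarieties, at the price of invoking Boucksom's theorem rather than Demailly--P\u{a}un directly.
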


Let $V$ be a real vector space defined over $\QQ$. For  a linear subspace $W$ of $V$, we define its \emph{rational closure}  to be the smallest linear subspace of $V$ defined over $\QQ$ which contains $W$. If this is all of $V$, then we
say that $W$ is \emph {fully irrational}. It is clear that in the Grassmannian of all linear subspaces of $V$ that are not fully irrational form a countable union of proper subvarieties defined over $\QQ$. In particular, the fully irrational subspaces are dense. 

Proposition \ref{prop:cone} and Lemma \ref{lemma:killing} imply:

\begin{corollary}\label{cor:twistorline}
Let $P$ be a fully irrational, positive $3$-plane in $H_\RR$. Then $\Per_s$ maps every connected component of $\Per_s^{-1}\per(P)$ isomorphically onto $\per(P)$.
\end{corollary}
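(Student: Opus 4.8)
The plan is to show that the restriction $\Per_s\colon\Per_s^{-1}\per(P)\to\per(P)$ is a covering map; since $\per(P)$ is a conic, hence simply connected, every connected component of $\Per_s^{-1}\per(P)$ is then carried isomorphically onto $\per(P)$. As $\Per_s$ is a local isomorphism, this restriction is automatically \'etale, hence open; the one substantive point is to produce, through enough points of $\Per_s^{-1}\per(P)$, a \emph{global} continuous section of $\Per_s$ defined over \emph{all} of $\per(P)$. Such a section will come from a twistor family, and Proposition \ref{prop:cone} together with the full irrationality of $P$ is exactly what supplies enough of these.

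First I would single out a dense ``good locus''. Call $z\in\per(P)$ bad if $\Pi_z^\perp\cap H\ne\{0\}$. For a nonzero $v\in H$, full irrationality of $P$ rules out $P\subset v^\perp$ (otherwise the rational closure of $P$ would lie in the proper rational subspace $v^\perp$), so $P\cap v^\perp$ is exactly a $2$-plane; and $\Pi_z\subset v^\perp$, together with $\Pi_z\subset P$, forces $\Pi_z=P\cap v^\perp$, which holds for at most two points $z$. Hence the bad locus is countable, and its complement $\per(P)^\circ\subset\per(P)$ is dense.

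Next I would build a section through a good point. Let $z\in\per(P)^\circ$, let $t\in\Per_s^{-1}\per(P)$ lie over $z$, and let $X$ be a hyperk\"ahlerian complex structure on $M$ representing $t$. The line $P\cap\Pi_z^\perp$ (the orthogonal complement of $\Pi_z$ inside the positive $3$-plane $P$) is positive and lies in the real part of $H^{1,1}(X)$, hence contains a ray inside the positive cone of $X$; as $z$ is good, Proposition \ref{prop:cone} makes a generator $\eta$ of that ray a K\"ahler class. Endowing $X$ with the associated K\"ahler--Einstein metric yields a Riemann manifold $N$ with $P_N\supseteq\Pi_z+\RR\eta=P$, whence $P_N=P$ by a dimension count. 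By Lemma \ref{lemma:killing} the twistor family $\Xcal_N\to\per(P_N)=\per(P)$ gives a tautological continuous map $\per(P)\to\teich$ whose composition with $\Per$ is the inclusion $\per(P)\subset\per$; composing further with $\teich\to\teich_s$ produces a continuous section $\sigma\colon\per(P)\to\teich_s$ of $\Per_s$ with $\sigma(z)=t$.

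Finally I would assemble these. Fix a connected component $C$ of $\Per_s^{-1}\per(P)$, which, being a component of the manifold $\Per_s^{-1}\per(P)$, is open in it, so that $\Per_s(C)$ is a nonempty open subset of $\per(P)$ and therefore meets $\per(P)^\circ$. Pick $t_0\in C$ over a good point $z_0$ and let $\sigma$ be the section just built with $\sigma(z_0)=t_0$. Then $\sigma(\per(P))$ is a connected subset of $\Per_s^{-1}\per(P)$ meeting $C$, hence contained in $C$, and $\Per_s\circ\sigma=\mathrm{id}$ shows $\Per_s|_C$ is surjective. As a continuous section of an \'etale map, $\sigma$ is an open embedding, so $\sigma(\per(P))$ is open in $C$; it is also closed in $C$ because $\teich_s$ is Hausdorff: if $\sigma(z_i)=t_i\to t_\infty\in C$, then $z_i=\Per_s(t_i)\to\Per_s(t_\infty)=:z_\infty\in\per(P)$ while $\sigma(z_i)\to\sigma(z_\infty)$ by continuity, so $t_\infty=\sigma(z_\infty)\in\sigma(\per(P))$ by uniqueness of limits. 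Since $C$ is connected, $\sigma(\per(P))=C$, and $\Per_s|_C$ is the inverse of $\sigma$, hence an isomorphism. The principal obstacle is the third step---passing from the purely local structure of $\Per_s$ to an honest section over the compact curve $\per(P)$---which is where the global twistor construction, hence Proposition \ref{prop:cone}, is essential; the first and last steps are soft, using only that $\per(P)$ is a simply connected compact curve and that $\teich_s$ is separated.
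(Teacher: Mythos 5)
Your proof is correct and fills in precisely the argument the paper leaves implicit: full irrationality of $P$ gives a dense set of points $z\in\per(P)$ where Proposition \ref{prop:cone} produces a K\"ahler class generating $P\cap\Pi_z^\perp$, hence a twistor family with $P_N=P$, and Lemma \ref{lemma:killing} turns this into a section of $\Per_s$ whose image is open and closed in the given component, which is exactly how the corollary is meant to follow. So this is the same approach as the paper, carried out in full detail.
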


The proof of Lemma \ref{lemma:main} below re-arranges and simplifies the corresponding treatment in \cite{verbitsky} and \cite{huy:bourbaki}.

\begin{proof}[Proof of Lemma \ref{lemma:main}]
We  can identify $U$ with the open unit ball $B_{<1}$  in $\CC^{n+1}$.
Let $r$ be the supremum of the $a\in (0,1]$ for which there exists a section over the open ball $B_{<a}$ defined by $\rho<a$ that takes $\Per_s(t)$ to $t$.
Then $r>0$ and we must show that $r=1$. Suppose $r<1$. Since $\Per_s$ is a local homeomorphism between separated spaces, two sections defined on the same  connected subset of $\per$ are equal when they are equal at some point.
So if $B_r$ denotes the ball $\rho\le r$, then we have a section $\sigma$ defined over  its interior $B_{<r}$. 
Let $z\in \partial B_r$. A positive line $\ell$ in $\Pi_z^\perp$ determines a twistor conic $\per(\ell+\Pi_z)$. We can (and will) take $\ell$ such that
$\per(\ell+\Pi_z)$ is transversal to the tangent space of $\partial B_r$ at $z$ (an open, nonempty condition) and $\ell$  is fully irrational
(this condition is dense). Then $\ell+\Pi_z$ is fully irrational and $B_r\cap \per(\ell+\Pi_z)$ is near $z$ a manifold with boundary, with $z$ being a boundary point. It follows from Corollary  \ref{cor:twistorline} that $B_{<r}\cap \per(\ell+\Pi_z)$ is nonempty and that the restriction of $\sigma$ to this subset   extends across $z$. So we have a section $\sigma_z$ on an open ball neighborhood $U_z$  of $z$  in $U$ such that $\sigma$ and $\sigma_z$ take the same value in
some point of $U_z\cap B_{<r}$. Since $U_z\cap B_{<r}$ is connected, it follows that $\sigma$ and $\sigma_z$ coincide on $U_z\cap B_{<r}$.
A useful feature of taking the $U_z$ to be open balls is that if $U_z$ and $U'_z$ meet (with $z,z'\in \partial B_r$), then both $U_z\cap U'_z$ and $U_z\cap U'_z\cap B_{<r}$ are connected. For it then follows  that $\sigma$ and the collection $\{\sigma_z\}_{z\in \partial B_r}$ together define a section of $\Per_s$ on  a neighborhood of $B_r$. Since such a neighborhood contains an open ball of radius $>r$, we get a contradiction.
\end{proof}

\section{Refinements and other consequences}

\subsection*{The other two period maps}
Let $\Ccal$ be a connected component of $\teich_\HH$.
Its preimage $\Ccal_{\HK}$  under the projection 
$\teich_{\HK}\to \teich_\HH$ is then a connected component of $\teich_\HK$ and  the image of $\Ccal_{\HK}$  in $\teich$
is a connected component  of $\teich$. Since the Torelli theorem asserts that the period map identifies the separated quotient of the latter with $\per$,
it follows that the other period maps define open embeddings $\Ccal_{\HK}\hookrightarrow \per_{\HK}$ and 
$\Ccal\hookrightarrow \per_\HH$. Our goal is to say something about their  images and to prove that these components come with universal families.

We begin with stating a stronger form of Proposition \ref{prop:cone}. Let us say that a linear form $\delta: H\to \ZZ$ is \emph{negative} if its kernel has signature $(3, n-1)$. 
This of course equivalent to the quadratic form  $q^\vee:H^\vee\to \QQ$ that is inverse to $q$, taking a  negative value on $\delta$. We shall write $H_\delta$ for the kernel
of $\delta: H\to \ZZ$.
We need the following the two theorems.

\begin{proposition}[Boucksom, \cite{boucksom}, Thme.\ 1.2]\label{prop:kcone}
Let $X$ be  hyperk\"ahlerian manifold. Then the set of K\"ahler classes in the positive cone in $H^{1,1}(X; \RR)$ is the intersection of the  positive cone with the open half spaces defined by the fundamental classes of the irreducible rational curves $C$ on $X$ that are negative (defined by $\int_C > 0$). $\square$
\end{proposition}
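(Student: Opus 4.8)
The plan is to combine elementary lattice geometry with one deep geometric input about hyperk\"ahler manifolds --- the uniruledness of the numerical degeneration loci of positive‑cone classes --- and then feed the result into a (singular) version of the Demailly--P\u{a}un K\"ahler criterion. Write $\mathcal C_X\subset H^{1,1}(X;\RR)$ for the positive cone of $X$, i.e.\ the component of $\{q>0\}$ that contains the K\"ahler classes; since $q$ has signature $(1,n)$ there, $\mathcal C_X$ is one half of a Lorentzian light cone, hence open and convex, and the K\"ahler cone $\mathcal K_X$ is an open convex subcone of it. For an irreducible curve $C\subset X$ let $\gamma_C\in H^{1,1}(X;\RR)$ be the class with $q(\,\cdot\,,\gamma_C)=\int_C(\,\cdot\,)$; it is of type $(1,1)$ because a holomorphic $2$‑form restricts to zero on a curve. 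If $C$ is \emph{not} negative, i.e.\ $q(\gamma_C)\geq 0$, then $\gamma_C\in\overline{\mathcal C_X}\ssm\{0\}$ (the sign being fixed by $\int_C\omega>0$ for K\"ahler $\omega$), so the light‑cone inequality for $q|_{H^{1,1}(X;\RR)}$ gives $q(\alpha,\gamma_C)>0$ for every $\alpha\in\mathcal C_X$: non‑negative curves impose no condition. Hence the cone in the statement equals
\[
\mathcal K':=\mathcal C_X\cap\bigcap_{C}\bigl\{\alpha: \textstyle\int_C\alpha>0\bigr\},
\]
where $C$ now ranges over \emph{all} irreducible rational curves of $X$. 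Since K\"ahler classes lie in $\mathcal C_X$ and are positive on every curve we have $\mathcal K_X\subseteq\mathcal K'$, and the whole problem is to prove the reverse inclusion.

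The skeleton is as follows. As the intersection of the convex cone $\mathcal C_X$ with open half‑spaces, $\mathcal K'$ is convex, hence connected, and it contains the nonempty open set $\mathcal K_X$; therefore it suffices to show that $\mathcal K_X$ is also \emph{closed} in $\mathcal K'$, for then $\mathcal K_X=\mathcal K'$. So let $\alpha\in\mathcal K'\cap\overline{\mathcal K_X}$. Being in the nef cone $\overline{\mathcal K_X}$, $\alpha$ is nef, and since $\alpha\in\mathcal C_X$ the Fujiki relation gives $\int_X\alpha^{2m}=c\,q(\alpha)^m>0$, so $\alpha$ is nef \emph{and big}. By the singular Demailly--P\u{a}un theorem --- in the form of Collins--Tosatti's null‑locus theorem, whose easy shadow in the case $H^{1,1}(X)\cap H^2(X;\ZZ)=\{0\}$ (empty null locus) is exactly Proposition \ref{prop:cone} of Huybrechts --- a nef and big class is K\"ahler as soon as its null locus $\mathrm{Null}(\alpha):=\bigcup\{Z:\int_Z\alpha^{\dim Z}=0\}$ is empty. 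Thus everything reduces to proving $\mathrm{Null}(\alpha)=\emptyset$.

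The remaining step is where the hyperk\"ahler structure enters, and it is the crux: one must show that each irreducible component $Z$ of $\mathrm{Null}(\alpha)$ --- a subvariety along which the positive‑cone (equivalently nef and big) class $\alpha$ degenerates numerically --- is covered by rational curves $C\subseteq Z$ with $\alpha\cdot C=0$. Granting this, such a $C$ contradicts $\alpha\in\mathcal K'$, so $\mathrm{Null}(\alpha)=\emptyset$ and we are done; combined with the first paragraph this yields $\mathcal K_X=\mathcal K'=$ the cone in the statement. I expect this uniruledness assertion to be the only genuine obstacle. It is the heart of Boucksom's argument and admits two routes: either, following Boucksom's original approach, via the theory of closed positive currents --- regularising a K\"ahler current in the class $\alpha$, analysing its numerical non‑nef locus and Lelong sublevel sets, and using the abundant deformations behind Proposition \ref{prop:cone} to cut down the Picard rank --- or, in more birational language, by observing that $K_X\cong\mathcal O_X$ forces the degeneration locus to have Kodaira dimension $-\infty$, so that Mori bend‑and‑break (equivalently the Boucksom--Demailly--P\u{a}un--Peternell uniruledness criterion applied to the null locus of a nef and big class) produces the required rational curves of $\alpha$‑degree zero. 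As a byproduct, once $\mathcal K_X=\mathcal K'$ one sees that $\mathcal K_X$ is cut out inside $\mathcal C_X$ by the orthogonals of the negative rational curve classes and that these walls are locally finite, although neither fact is needed for the proof above.
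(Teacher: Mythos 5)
You should first note that the paper does not prove this statement at all: it is imported verbatim from Boucksom \cite{boucksom} (the ``$\square$'' after the statement signals a quoted result), so there is no internal argument to compare with and your proposal has to stand on its own. Its formal skeleton is fine: the light-cone inequality showing that curves $C$ with $q(\gamma_C)\ge 0$ impose no condition on the positive cone, the reduction (convexity and connectedness of $\mathcal{K}'$, openness of $\mathcal{K}_X$) to showing that any $\alpha\in\mathcal{K}'\cap\overline{\mathcal{K}_X}$ is K\"ahler, and the observation that such an $\alpha$ is nef with $q(\alpha)>0$, hence nef and big by Demailly--P\u{a}un \cite{dp}, and K\"ahler as soon as $\mathrm{Null}(\alpha)=\emptyset$ by the Collins--Tosatti null-locus theorem. (Both of these are heavier and later tools than anything Boucksom had in 2001, but citing them is legitimate.)

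The genuine gap is exactly the step you flag yourself: that every irreducible component of $\mathrm{Null}(\alpha)$ is covered by rational curves on which $\alpha$ vanishes. This is not a routine citation, and neither of the two routes you gesture at closes it. The bend-and-break/BDPP route does not apply as stated: those criteria concern projective manifolds and the positivity of the canonical class of the variety whose uniruledness is sought, whereas $X$ here is merely K\"ahler and the components $Z$ of the null locus are possibly singular subvarieties about whose resolutions' canonical classes the triviality of $K_X$ says nothing; the claim that ``$K_X\cong\mathcal{O}_X$ forces the degeneration locus to have Kodaira dimension $-\infty$'' is unsubstantiated, and in fact uniruledness of null loci of nef and big classes on compact K\"ahler manifolds is in general an open conjecture, known only in special cases (projective line-bundle classes via base-point-freeness, low dimension). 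The other route, ``following Boucksom's original approach,'' is precisely the content of the theorem being proved, so invoking it without carrying it out is circular: his actual argument goes through Huybrechts' results on the birational K\"ahler cone and deformations (the machinery behind Proposition \ref{prop:cone}), together with a careful current-theoretic analysis, and none of that is reproduced here. As it stands, your proposal establishes the easy inclusion $\mathcal{K}_X\subseteq\mathcal{K}'$ and the formal reductions, but the heart of the theorem --- producing the $\alpha$-trivial rational curves --- is still missing.
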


We also need the following result, due to Amerik-Verbitsky (see also Mongardi \cite{mongardi}, Thm.\ 1.3).

\begin{proposition}[Amerik-Verbitsky \cite{amverb2015}, Thm.\ 5.15 (i) $\Leftrightarrow$ (v)]\label{prop:walls}
For an Einstein metric $g$ on $M$, the set of negative vectors in $H^\vee$ whose kernel does \emph{not} contain the associated $3$-plane $P_g$ only depends on the connected component of the image of $(M,g)$ in $\teich_\HH$. $\square$
\end{proposition}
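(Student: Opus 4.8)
The plan is to reduce the assertion to a statement about how the set of negative classes whose kernel avoids $P_g$ varies within a connected component of $\teich_\HH$, and then to invoke the deformation theory packaged in Proposition \ref{prop:separation} together with Boucksom's description of the Kähler cone (Proposition \ref{prop:kcone}). Concretely, fix an Einstein metric $g$ on $M$ and let $N$ be the associated Riemann manifold, with its twistor $3$-plane $P_g = P_N \subset H_\RR$. A negative vector $\delta \in H^\vee$ has kernel $H_\delta$ of signature $(3, n-1)$; the condition $P_g \not\subset H_\delta$ is equivalent to saying that $\delta$ is nonzero on the $3$-plane $P_g$, i.e.\ that the twistor conic $\per(P_g)$ is not contained in the hyperplane section $\{\delta = 0\}$ of $\check\per(H_\RR)$. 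The first step is to observe that, for such a $\delta$, there is an element $J \in S_N$ — in fact a dense set of them — for which $\delta$ restricts to a negative class in $H^{1,1}(X_J;\RR)$ that is moreover of Hodge type $(1,1)$: this follows because the locus in $\per(P_g)$ where $\delta$ fails to be of type $(1,1)$ is a proper closed subset (it is cut out by the vanishing of the pairing of $\delta$ with the period line), so generically $\delta \in H^{1,1}_z \cap H^\vee$ along the twistor conic.

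Next I would connect the "negative vector avoiding $P_g$" condition to the existence of rational curves. By Proposition \ref{prop:kcone}, for a hyperkählerian $X_J$ the walls of the Kähler cone inside the positive cone are exactly the hyperplanes orthogonal to fundamental classes of negative irreducible rational curves; dually, these are detected by negative vectors in $H^\vee$ that are of Hodge type $(1,1)$ and pair positively with an effective curve class. The content of Amerik--Verbitsky (which I will take as the hard analytic input, via Mongardi's formulation) is that whether a given negative class $\delta$ is "active" — i.e.\ arises this way from an actual rational curve on some twistor fiber — is insensitive to moving the Einstein metric within a connected component of $\teich_\HH$. The mechanism is monodromy-equivariance plus deformation invariance of the relevant moduli of rational curves: along a path in $\Ccal \subset \teich_\HH$, the twistor $3$-planes $P_g$ sweep out a family, and a negative $\delta$ not annihilating $P_g$ at one point of the path continues not to annihilate it nearby (an open condition), while Proposition \ref{prop:separation}(ii) guarantees that the hyperkählerian fibers over points of the corresponding twistor conics stay bimeromorphically equivalent, hence share the same set of negative classes realized by rational curves on birational models.

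The main obstacle, and the step where the real work lives, is establishing that the set in question is genuinely \emph{locally constant} rather than merely upper or lower semicontinuous: one direction (a negative class that is active stays active under small deformation) is the easier "rational curves deform" statement, while the other direction (a class that is \emph{not} active cannot suddenly become active) requires the finiteness/boundedness results underpinning Amerik--Verbitsky's theorem — essentially that there are only finitely many "monodromy orbits of wall classes" of bounded square, so the set cannot jump on a connected base. I would therefore structure the proof as: (1) reformulate the target set as $\{\delta \in H^\vee : \delta \text{ negative}, \ P_g \not\subset H_\delta, \ \delta \text{ is a wall class for some twistor fiber}\}$; (2) show via Proposition \ref{prop:kcone} and the twistor picture of Section 1 that this coincides with the Amerik--Verbitsky set of "MBM-type" classes attached to $(M,g)$; (3) cite Proposition \ref{prop:separation}(ii) and the deformation invariance of the space of rational curves to get that membership is open along $\Ccal$; and (4) invoke the Amerik--Verbitsky finiteness to upgrade openness to local constancy, hence constancy on the connected component $\Ccal$. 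Since steps (2) and (4) are exactly the cited theorem, the proof is short: it is a translation between our twistor-theoretic language and theirs, with Proposition \ref{prop:separation} supplying the bimeromorphic-invariance glue.
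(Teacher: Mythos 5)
The paper does not prove Proposition \ref{prop:walls} at all: the bracketed attribution and the closing $\square$ mean it is imported as an external input from Amerik--Verbitsky \cite{amverb2015} (see also Mongardi \cite{mongardi}), on exactly the same footing as Boucksom's Proposition \ref{prop:kcone} and Huybrechts' Proposition \ref{prop:cone}. So a citation is all that is expected here, and the honest core of your proposal---your steps (2) and (4), which you yourself concede ``are exactly the cited theorem''---amounts to that citation. The difficulty is that the material you wrap around it, presented as the reduction, is not sound, so the proposal should not be read as a proof.

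Two concrete problems. First, your opening computation is inverted: a class $\delta\in H^\vee$ is of type $(1,1)$ on the fiber $X_z$, $z\in\per(P_g)$, exactly when $\delta$ vanishes on the $2$-plane $\Pi_z\subset P_g$; if $P_g\not\subset H_\delta$, then $\delta|_{P_g}$ is a nonzero linear form, whose kernel is a single $2$-plane, so $\delta$ is of type $(1,1)$ at precisely two (conjugate) points of the twistor conic---the vanishing of the pairing with the period line cuts out the locus where $\delta$ \emph{is} of type $(1,1)$, which is the proper closed subset, not its complement. (If $P_g\subset H_\delta$, then $\delta$ is of type $(1,1)$ on every fiber.) Second, Proposition \ref{prop:separation}(ii) cannot supply the ``bimeromorphic-invariance glue'' along a path in $\teich_\HH$: it compares two families over the \emph{same} base carrying the \emph{same} variation of Hodge structure, whereas the fibers over different points of a path in a component of $\teich_\HH$ have different Hodge structures and are in general not bimeromorphically equivalent. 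The assertion that a negative class realized by a rational curve on some twistor fiber remains a wall class as the Einstein metric moves through the component---equivalently, the constancy asserted in the proposition---is precisely the deformation-invariance theorem of Amerik--Verbitsky; nothing in this paper reproves it, and neither your openness step (3) nor the finiteness step (4) is independent of it. Note also that your step (1) silently replaces the set in the statement, namely $\{\delta\in H^\vee \text{ negative}: P_g\not\subset H_\delta\}$, by a smaller set with an added wall-class condition; since the corollary following the proposition (the identification of $\Per_\HH(\Ccal)$ with $\Gr^+_3(H_\RR)_{\Delta_\Ccal}$, cf.\ \cite{av}) uses the statement as given, such a reformulation would need a justification that your sketch does not provide.
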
 

This suggests that we define $\Delta_\Ccal\subset H^\vee$ as the set of indivisible  negative  linear forms $H\to \ZZ$, which 
for some K\"ahler-Einstein metric associated with an element of $\Ccal$, are representable by an irreducible  rational curve.

We now need  some elementary properties of arrangements on the Grassmannian $\Gr^+_3(H_\RR)$.
If $\delta\in H^\vee$ is negative, then $\Gr^+_3(H_{\delta, \RR})$ is a codimension $3$-submanifold of $\Gr^+_3(H_\RR)$. In particular,  $\Gr^+_3(H_\RR)\ssm \Gr^+_3(H_{\delta, \RR})$ is simply connected.

\begin{lemma}\label{lemma:finitenesscriterion}
Let $\Delta$ be a subset of $H^\vee$ which consists of indivisible  \emph{negative}  vectors, and  is invariant under a subgroup $\Gamma$ of $\Orth(q)$  of finite index. Then
\[
\Gr^+_3(H_\RR)_\Delta:=\Gr^+_3(H_\RR)\ssm \cup_{\delta\in \Delta} \Gr^+_3(H_{\delta, \RR})
\]
 is open in $\Gr^+_3(H_\RR)$ if and only if  $\Gamma$ has finitely many orbits in $\Delta$. If these equivalent conditions are fulfilled, then 
$\{\Gr^+_3(H_{\delta, \RR})\}_{\delta\in\Delta}$  is locally finite on  $\Gr^+_3(H_\RR)$ so that any open subset of $\Gr^+_3(H_\RR)$ which contains
$\Gr^+_3(H_\RR)_\Delta$ is simply connected, but if they are not, then $\Gr^+_3(H_\RR)_\Delta$ has empty interior.
\end{lemma}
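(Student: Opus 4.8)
The plan is to argue by a dichotomy on whether $\Gamma$ has finitely many orbits on $\Delta$, and to exploit the fact that $\Orth(q)$ acts properly discontinuously on the symmetric space $\Gr_3^+(H_\RR)$. First I would treat the easier implication. Suppose $\Gamma$ has only finitely many orbits on $\Delta$. Since each $\delta$ is negative, $\Gr_3^+(H_{\delta,\RR})$ is a closed codimension-$3$ submanifold of $\Gr_3^+(H_\RR)$. Pick orbit representatives $\delta_1,\dots,\delta_k$. For a fixed $\delta$, the set of $\gamma\in\Orth(q)$ with $\gamma\cdot\Gr_3^+(H_{\delta,\RR})$ meeting a given compact set $L$ is controlled by proper discontinuity: only finitely many cosets $\gamma\Gamma_\delta$ (where $\Gamma_\delta$ is the stabilizer of the line $\RR\delta$, which preserves $\Gr_3^+(H_{\delta,\RR})$) can have $\gamma\cdot\Gr_3^+(H_{\delta,\RR})\cap L\ne\emptyset$; otherwise one produces infinitely many $3$-planes in $L$ lying on these submanifolds, and after passing to a convergent subsequence of base points one contradicts the discreteness of the $\Gamma$-orbit of $\delta$ in $H^\vee$ (the orbit is discrete because $\Orth(q)$ is arithmetic and $\delta$ is a primitive vector of fixed negative square). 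Hence $\{\Gr_3^+(H_{\delta,\RR})\}_{\delta\in\Delta}$ is locally finite, its union is closed, and $\Gr_3^+(H_\RR)_\Delta$ is open. Local finiteness also gives the simple-connectivity statement: each $\Gr_3^+(H_{\delta,\RR})$ has real codimension $3$, so removing a locally finite family of them from the simply connected manifold $\Gr_3^+(H_\RR)$ (or from any open set containing the complement of that family) leaves a simply connected space, by a standard transversality/general-position argument for loops and null-homotopies.

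For the reverse direction I would prove the contrapositive in the sharp form stated: if $\Gamma$ has infinitely many orbits on $\Delta$, then $\Gr_3^+(H_\RR)_\Delta$ has empty interior (in particular it is not open unless empty). The point is that the negative vectors $\delta\in H^\vee$ of bounded square form finitely many $\Gamma$-orbits (finiteness of representations of a given integer by a fixed rational quadratic form, up to an arithmetic group); so infinitely many orbits forces $|q^\vee(\delta)|\to\infty$ along $\Delta$. Now I claim that the submanifolds $\Gr_3^+(H_{\delta,\RR})$ for $\delta$ of large negative square become dense: given any positive $3$-plane $P$ and any $\varepsilon$, one can find an integral $\delta$ of large $|q^\vee(\delta)|$ whose kernel passes $\varepsilon$-close to $P$. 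This is the arithmetic heart of the matter and I expect it to be the main obstacle. One way to see it: the set of $\delta\in H^\vee_\RR$ with $q^\vee(\delta)<0$ and $P\subset\ker\delta$ is a $3$-dimensional negative-definite subspace $P^{\perp,\vee}$ of $H^\vee_\RR$; by density of rational points and by rescaling to clear denominators, one gets integral $\delta$ arbitrarily close to rays in $P^{\perp,\vee}$, necessarily of large norm, whose kernels therefore approximate hyperplanes containing $P$ arbitrarily well, hence whose $\Gr_3^+(H_{\delta,\RR})$ come arbitrarily close to $P$ in $\Gr_3^+(H_\RR)$. Since every $\delta$ produced this way lies in some $\Gamma$-orbit meeting $\Delta$ once we know $\Delta$ is $\Gamma$-invariant and— here is the subtlety—one must ensure these approximating $\delta$ actually lie in $\Delta$, not merely in $H^\vee$; this is where we use that $\Delta$ is assumed $\Gamma$-invariant together with the fact that in the infinitely-many-orbits case $\Delta$ must already contain vectors of arbitrarily large square in "enough" directions, so that its $\Gamma$-translates are dense among all negative integral rays. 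Concluding: no point of $\Gr_3^+(H_\RR)$ has a neighborhood disjoint from $\cup_{\delta\in\Delta}\Gr_3^+(H_{\delta,\RR})$, i.e.\ $\Gr_3^+(H_\RR)_\Delta$ has empty interior.

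The main obstacle, then, is the density statement and in particular the bookkeeping needed to guarantee that the approximating integral negative functionals genuinely belong to $\Delta$ rather than just to $H^\vee$; this is exactly the place where finiteness of orbits (equivalently, Eichler/Siegel-type finiteness for the arithmetic group $\Orth(q)$ acting on vectors of bounded norm) is both the right hypothesis and the delicate input. Once that is in hand, the local-finiteness half and the simple-connectivity consequence are routine: local finiteness follows from proper discontinuity of $\Orth(q)$ on its symmetric space combined with discreteness of orbits of primitive vectors, and the simple-connectivity of open sets containing $\Gr_3^+(H_\RR)_\Delta$ follows because one is deleting a locally finite union of codimension-$3$ submanifolds from a simply connected manifold, which changes neither $\pi_0$ nor $\pi_1$.
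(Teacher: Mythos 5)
Your first half (finitely many orbits $\Rightarrow$ local finiteness, hence openness and the simple-connectivity statement) is essentially correct and is in the same spirit as the paper's argument: the paper works with the open cone $U_\varepsilon(P^\perp)=\{(v,w)\in P\oplus P^\perp : q(v)<-\varepsilon q(w)\}$, a neighborhood of $P^\perp\ssm\{0\}$ consisting of negative vectors, notes that a wall $\Gr^+_3(H_{\delta,\RR})$ passes near $P$ exactly when $\delta$ lies in such a cone, and then uses the same bounded-lattice-point count you sketch (a vector nearly orthogonal to a plane in a compact set of positive $3$-planes and of fixed $q^\vee$-value has bounded Euclidean norm, so infinitely many distinct ones are impossible). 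One small slip: the space of $\delta$ with $P\subset\ker\delta$ is the annihilator of $P$, of dimension $n=b_2(M)-3$, not $3$-dimensional.

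The genuine gap is in the converse. What has to be proved is that if $\Delta$ consists of infinitely many $\Gamma$-orbits, then for \emph{every} positive $3$-plane $P$ and every $\varepsilon>0$ the cone $U_\varepsilon(P^\perp)$ contains an element \emph{of $\Delta$}; equivalently, walls indexed by $\Delta$ itself enter every neighborhood of $P$. Your rational-approximation argument only produces integral negative vectors of large square whose direction is close to the annihilator of $P$; nothing forces these to lie in $\Delta$. The patch you propose---that in the infinitely-many-orbits case ``$\Delta$ must already contain vectors of arbitrarily large square in enough directions, so that its $\Gamma$-translates are dense among all negative integral rays''---is not an argument but a restatement of the claim to be proved, and since $\Delta$ is by hypothesis $\Gamma$-invariant, invoking its $\Gamma$-translates adds nothing. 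Note also that the claim cannot be reduced to a single orbit: the directions of one orbit are never dense (its walls form a discrete, locally finite family by your own first half, and a single wall can stay away from a prescribed ball), so the density must genuinely exploit the infinitude of orbits; this is the nontrivial arithmetic/dynamical input behind the statement (it is exactly the assertion the paper isolates---that $U_\varepsilon(P^\perp)$ meets every infinite union of $\Gamma$-orbits of negative indivisible vectors in an infinite set---and, in the literature, statements of this kind are obtained from equidistribution results for orbits of the stabilizing orthogonal subgroups, as in Amerik--Verbitsky, rather than from density of rational directions). As written, your second direction therefore does not establish that $\Gr^+_3(H_\RR)_\Delta$ has empty interior.
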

\begin{proof}
If $P$ is a positive $3$-plane, then it is clear that for every $0<\varepsilon<1$, 
the open subset $U_\varepsilon(P^\perp)$ of $H_\RR$ consisting of $(v,w)\in P\oplus P^\perp=H_\RR$ with $q(v)< -\varepsilon q(w)$ is an open 
neighborhood of $P^\perp\ssm \{ 0\}$ which consists of negative vectors.  The lemma follows from the assertion that such an open subset 
meets every $\Gamma$-orbit in $H$ in a finite set, and meets every infinite  union of $\Gamma$-orbits consisting of negative indivisible vectors 
in an infinite set. The proof of this last property is left to the reader.
\end{proof}

The following Corollary has (apart from its last assertion)  a version for $\teich_\HK$ and is in that form due to Amerik-Verbitsky (\cite{av}, Theorem 4.9).

\begin{corollary}
Let $\Ccal$ be a connected component of $\teich_\HH$. Then  $\Delta_\Ccal$ is a finite union of $\Mod(M)_\Ccal$-orbits and  $\Per_\HH$ maps $\Ccal$ diffeomorphically onto $ \Gr^+(H_\RR)_{\Delta_\Ccal}$.  
In particular, $\Ccal$ is simply connected.
\end{corollary}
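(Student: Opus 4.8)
The plan is to combine the preceding two propositions of Amerik--Verbitsky (Proposition~\ref{prop:walls}) and Boucksom (Proposition~\ref{prop:kcone}) with the finiteness criterion of Lemma~\ref{lemma:finitenesscriterion}, and then deduce the statement about $\Per_\HH$ from the (weak) global Torelli theorem together with the structure of $\teich_\HH$ recorded in diagram~$(\dagger)$.

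First I would address the finiteness of $\Delta_\Ccal$ as a union of $\Mod(M)_\Ccal$-orbits. The set $\Delta_\Ccal$ is $\Mod(M)_\Ccal$-invariant by construction, since $\Mod(M)_\Ccal$ preserves $\Ccal$ and acts compatibly on cohomology, carrying irreducible rational curves to irreducible rational curves. To apply Lemma~\ref{lemma:finitenesscriterion} with $\Gamma$ the image of $\Mod(M)_\Ccal$ in $\Orth(q)$ (which is of finite index in $\Orth(q)$ by the discussion following Theorem~\ref{thm:main}, using the weak global Torelli theorem and Huybrechts' finiteness result \cite{huy:finite}), it suffices to show that $\Gr^+_3(H_\RR)\ssm\bigcup_{\delta\in\Delta_\Ccal}\Gr^+_3(H_{\delta,\RR})$ is open, equivalently has nonempty interior. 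But this locus contains the image $\Per_\HH(\Ccal)$: by Proposition~\ref{prop:walls} the set of negative vectors in $H^\vee$ whose kernel does not contain $P_g$ is constant on $\Ccal$, hence equals $\Delta_\Ccal$ together with possibly more classes, and in any case for $(M,g)$ in $\Ccal$ no $\delta\in\Delta_\Ccal$ has $P_g\subset H_{\delta,\RR}$ --- otherwise $\delta$ would restrict to zero on the positive cone, contradicting that it is represented by an effective curve against which a K\"ahler class is positive. Since $\Ccal$ is open in $\teich_\HH$ and $\Per_\HH$ is a local diffeomorphism, $\Per_\HH(\Ccal)$ is open and nonempty, so Lemma~\ref{lemma:finitenesscriterion} gives the finiteness.

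Next I would identify the image precisely as $\Gr^+_3(H_\RR)_{\Delta_\Ccal}$. The inclusion $\Per_\HH(\Ccal)\subseteq \Gr^+_3(H_\RR)_{\Delta_\Ccal}$ is the argument just given. For the reverse inclusion, one shows $\Per_\HH(\Ccal)$ is also \emph{closed} in $\Gr^+_3(H_\RR)_{\Delta_\Ccal}$; since the latter is connected (indeed simply connected, as it is obtained by removing a locally finite union of codimension-$3$ submanifolds) and $\Per_\HH(\Ccal)$ is nonempty and open, this forces equality. Closedness is where twistor families enter: given a positive $3$-plane $P\in\Gr^+_3(H_\RR)_{\Delta_\Ccal}$ that is a limit of planes $P_i=\Per_\HH(g_i)$ with $g_i\in\Ccal$, one needs that the twistor conics $\per(P_i)$ --- which lie in the component of $\teich_s$ attached to $\Ccal$ --- limit to a twistor conic $\per(P)$ whose fibers are again hyperk\"ahler structures with metric of isometry type in $\Ccal$; here Propositions~\ref{prop:cone} and~\ref{prop:kcone} (via Corollary~\ref{cor:twistorline}) guarantee that when $P$ avoids all walls $H_{\delta,\RR}$, the period map $\Per_s$ is an isomorphism over $\per(P)$, so the limiting structure exists and lands in $\Ccal$. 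That $\Per_\HH|_\Ccal$ is then a diffeomorphism (not merely a bijective local diffeomorphism) follows because a bijective local diffeomorphism between manifolds is a diffeomorphism. Finally, simple connectivity of $\Ccal$ is immediate: it is diffeomorphic to $\Gr^+_3(H_\RR)_{\Delta_\Ccal}$, which is simply connected by the last clause of Lemma~\ref{lemma:finitenesscriterion}.

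The main obstacle I anticipate is the closedness/surjectivity step --- i.e.\ controlling the limiting behavior of twistor families as the $3$-plane $P$ approaches a wall-free boundary point of $\Per_\HH(\Ccal)$, and verifying that the limit metric really lies in the \emph{same} component $\Ccal$ rather than a neighboring one. This is precisely the point where one must invoke that walls are crossed only along $\Delta_\Ccal$, so that as long as one stays inside $\Gr^+_3(H_\RR)_{\Delta_\Ccal}$ the component cannot change; making this rigorous requires the continuity of the Einstein metric in families (as in the proof of Proposition~\ref{prop:separation}(iii)) together with Proposition~\ref{prop:walls} to pin down that the curve classes one must avoid are exactly those in $\Delta_\Ccal$. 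The other steps are essentially formal consequences of results already in hand.
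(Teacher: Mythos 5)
Your first half (the finiteness of $\Delta_\Ccal$ as a union of $\Mod(M)_\Ccal$-orbits, obtained by showing that $\Per_\HH(\Ccal)$ is a nonempty open subset of $\Gr^+_3(H_\RR)_{\Delta_\Ccal}$ and feeding this into Lemma \ref{lemma:finitenesscriterion}) is essentially the paper's argument. The second half, however, has two genuine gaps. First, injectivity of $\Per_\HH|_\Ccal$ is never proved: you only ever use that $\Per_\HH$ is a local diffeomorphism, show the image is open and (you hope) closed in $\Gr^+_3(H_\RR)_{\Delta_\Ccal}$, and then declare the map a ``bijective local diffeomorphism''. A surjective local diffeomorphism need not be injective (it need not even be a covering map), so ``diffeomorphically onto'' does not follow. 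This is precisely where Theorem \ref{thm:main} enters in the paper: since the left square of diagram $(\dagger)$ is cartesian and the Torelli theorem identifies the separated quotient of the component of $\teich$ attached to $\Ccal$ with $\per$ (together with Proposition \ref{prop:separation}(iii), which rules out two inseparable structures sharing a K\"ahler ray), one gets that $\Ccal_\HK\hookrightarrow\per_\HK$ and hence $\Ccal\hookrightarrow\Gr^+_3(H_\RR)$ are open \emph{embeddings}. Your proposal invokes only the weak global Torelli statement, which gives finiteness of fibers, not injectivity.

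Second, your closedness/surjectivity step misuses Corollary \ref{cor:twistorline}: that corollary (and Proposition \ref{prop:cone} behind it) requires the $3$-plane to be \emph{fully irrational}, not merely to avoid the walls $\Gr^+_3(H_{\delta,\RR})$, $\delta\in\Delta_\Ccal$; for a general wall-avoiding $P$ there may be integral $(1,1)$-classes and the whole positive cone need not consist of K\"ahler classes, so ``the period map is an isomorphism over $\per(P)$, hence the limiting structure exists'' is unjustified. Moreover, even where the corollary applies it does not by itself place the resulting Einstein metric in the component $\Ccal$, which you acknowledge but do not resolve. The missing argument is the one the paper summarizes as ``turning back to Proposition \ref{prop:kcone}'': given $P\in\Gr^+_3(H_\RR)_{\Delta_\Ccal}$, choose $z\in\per$ with $\Pi_z\subset P$; the Torelli theorem supplies a complex structure over $z$ in the component of $\teich$ attached to $\Ccal$, whose K\"ahler cone is, by Boucksom's theorem combined with Proposition \ref{prop:walls} and the definition of $\Delta_\Ccal$, cut out inside the positive cone exactly by walls coming from $\Delta_\Ccal$ (a locally finite arrangement, by the first part). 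Wall-avoidance of $P$ then makes the ray $P\cap\Pi_z^\perp$ a K\"ahler class, producing a hyperk\"ahler metric in $\Ccal$ with period exactly $P$; no limiting of twistor families is needed.
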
 
\begin{proof}
The Torelli theorem \ref{thm:main} implies that $\Per_\HH$ defines an open embedding  of $\Ccal$ in $\Gr^+(H_\RR)$.
This image is of course $\Mod(M)_\Ccal$-invariant.
Propositions  \ref{prop:kcone}  and \ref{prop:walls} imply that this image in $\Per_\HH$ must be contained in $\Gr^+(H_\RR)_{\Delta_\Ccal}$.
Lemma \ref{lemma:finitenesscriterion} then tells us that $\Delta_\Ccal$ must be a finite union of $\Mod(M)_\Ccal$-orbits and defines
a locally finite arrangement  on $\Gr^+(H_\RR)$. Then turning back to Proposition \ref{prop:kcone}, we see that this  implies that the image of $\Ccal$ is exactly $\Gr^+(H_\RR)_{\Delta_\Ccal}$.
\end{proof}

The Torelli theorem asserts among other things that the $\Mod(M)$-stabilizer of $\Ccal$, $\Mod(M)_\Ccal$, acts with finite kernel on $H=H^2(M; \ZZ)$. 
Property (iv) of Proposition \ref{prop:separation} implies that for every Einstein metric $g$ on $M$ which represents a point of 
$\Ccal$, the group of isometries of $(M,g)$ that are isotopic to the identity only depends on $\Ccal$ and hence can be
identified with the kernel of this action. We therefore denote this kernel by $\aut_0(\Ccal)$.

\begin{corollary}\label{cor:fine_einstein}
The Teichm\"uller space of Einstein metrics on $M$, $\teich_{\HH}$, carries a family of Einstein manifolds $\Ncal_{\HH}/\teich_{\HH}$ which is endowed with a faithful action of $\Mod(M)$. It is  almost-universal in the sense that every family of  Einstein metrics on $M$ is a pull-back of this one, but 
can be so in more ways than one, with the ambiguity residing  in a finite group which is constant on every connected component  $\teich_{\HH}$. 
\end{corollary}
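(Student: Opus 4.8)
The plan is to obtain $\Ncal_{\HH}/\teich_{\HH}$ by first constructing a universal family over $\teich_{\HK}$ and then descending it along the $S^2$-bundle $\pi\colon\teich_{\HK}\to\teich_{\HH}$. For the first step, start from a basic chart $\Xcal/U$ ($U\subset\per$ open) and pull it back along the positive-cone bundle $\per_{\HK}\to\per$: every fibre then carries a distinguished ray of K\"ahler classes and hence, by Yau's theorem, a canonical K\"ahler--Einstein metric, so the pulled-back family is a family of hyperk\"ahler manifolds over an open subset of $\per_{\HK}$. Two such families over the same open subset of $\per_{\HK}$ agree canonically: this is exactly Proposition~\ref{prop:separation}(iii), applied with $\phi$ and $f_o$ the identity and with $\kappa,\kappa'$ the two fibrewise K\"ahler classes, which are matched by construction, so that $\hat U=U$ and $\Zcal$ is the graph of an isomorphism. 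These canonical identifications glue the charts into one family $\Xcal_{\HK}\to\teich_{\HK}$ of hyperk\"ahler manifolds (Markman's family recovered), and forgetting the complex structure but retaining the metric yields a smooth family $\Ncal_{\HK}\to\teich_{\HK}$ of Einstein manifolds. Since the charts sit over subsets of $\per_{\HK}$ on which $\Mod(M)$ acts through $\rho$ and the gluing is unique, $\Mod(M)$ acts on $\Xcal_{\HK}$ and on $\Ncal_{\HK}$ compatibly with its action on the bases.

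The second step is descent to $\teich_{\HH}$. By Corollary~\ref{cor:artincover} and the twistor discussion, $\pi\colon\teich_{\HK}\to\teich_{\HH}$ is a locally trivial $S^2$-bundle, the fibre over a point $t$ (the class of an Einstein manifold $N_t$) being $S_{N_t}\cong\per(P_{N_t})$. The restriction of $\Ncal_{\HK}$ to this fibre is the family underlying the twistor family $\Xcal_{N_t}\to\per(P_{N_t})$, which by the Hitchin-Karlhede-Lindstr\"om-Ro\v{c}ek theorem (Thm.~3.3 of \cite{hklr}) is \emph{canonically} the constant family $N_t\times\per(P_{N_t})$. Consequently the two pullbacks of $\Ncal_{\HK}$ to $\teich_{\HK}\times_{\teich_{\HH}}\teich_{\HK}$ are canonically isomorphic, and this isomorphism satisfies the cocycle condition over the triple fibre product, since over a fibre of $\pi$ all the canonical trivializations are identifications with the same $N_t$. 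Descent along the surjective submersion $\pi$ being effective for smooth fibre bundles, one obtains a smooth family of Einstein manifolds $\Ncal_{\HH}\to\teich_{\HH}$ with $\pi^{*}\Ncal_{\HH}\cong\Ncal_{\HK}$, still carrying the $\Mod(M)$-action. That action is faithful: an element in its kernel fixes every point of $\teich_{\HH}$ and restricts to the identity of some fibre $N_t$, which is $M$ with an Einstein metric, so a diffeomorphism representing it is the identity and the element is trivial in the mapping class group.

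For almost-universality, let $p\colon\Ncal_S\to S$ be a family of Einstein metrics on $M$. Its tautological classifying map $h\colon S\to\teich_{\HH}$, $s\mapsto[(M,g_s)]$, is well defined and smooth, because $\Per_{\HH}$ is a local diffeomorphism and the associated period $\Per_{\HH}\circ h\colon s\mapsto P_{(M,g_s)}\in\Gr_3^{+}(H_\RR)$ is smooth; by the preceding Corollary $\Per_{\HH}$ restricts to a diffeomorphism of each component of $\teich_{\HH}$ onto an open subset of $\Gr_3^{+}(H_\RR)$, so $h$ is moreover uniquely determined. On sufficiently small opens of $S$ the basic charts realize $h^{*}\Ncal_{\HH}$ and $\Ncal_S$ as the \emph{same} family with the given period, by the local Torelli theorem; these local isomorphisms glue up to the sheaf of fibrewise automorphisms, i.e.\ up to $\aut(\Ncal_{\HH}|_{\Ccal}/\Ccal)$ on the component $\Ccal$ of $\teich_{\HH}$ hit by $h$. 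By Proposition~\ref{prop:separation}(iv) this group is $\aut_0(\Ccal)$, which is finite by the Torelli theorem~\ref{thm:main} and depends only on $\Ccal$. This is precisely the asserted ambiguity.

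The main obstacle is the descent step: everything else is formal once Proposition~\ref{prop:separation} and the preceding corollaries are available, but one must genuinely check that the canonical trivializations on the twistor $2$-spheres assemble into a bona fide descent datum (cocycle condition included) and that descent along the \emph{non-trivial} $S^2$-bundle $\pi$ is effective in the smooth category --- equivalently, that $\Ncal_{\HK}$ is an honest pullback from $\teich_{\HH}$ rather than merely fibrewise constant over it.
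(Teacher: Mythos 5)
The crucial step in your first paragraph --- ``two such families over the same open subset of $\per_{\HK}$ agree \emph{canonically}: this is exactly Proposition~\ref{prop:separation}(iii)'' --- is where the argument breaks down. Part (iii) of Proposition~\ref{prop:separation} produces \emph{an} isomorphism $\Xcal\cong\Xcal'$ over the base, but not a canonical one: any two such isomorphisms differ by an element of the finite group $\aut_0(\Xcal/U)\cong\aut_0(\Ccal)$ of fibrewise automorphisms isotopic to the identity, which need not be trivial (indeed your own last paragraph invokes exactly this group as the ambiguity in almost-universality, which is inconsistent with the claimed canonicity). So the basic charts do \emph{not} glue automatically; choosing gluing isomorphisms $f_{\alpha\beta}$ on overlaps, the compositions $f_{\gamma\alpha}f_{\beta\gamma}f_{\alpha\beta}$ on triple overlaps give a (centre-valued, once the isomorphisms are taken $\aut_0(\Ccal)$-equivariant) \v{C}ech $2$-cocycle, i.e.\ a gerbe, and the whole content of the corollary is that this gerbe is trivializable. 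This cannot be waved away, because the space over which you are gluing, $\Gr^+_3(H_\RR)_{\Delta_\Ccal}$ (equivalently a component of $\teich_\HH$ or its $S^2$-bundle $\teich_\HK$), is the complement of codimension-$3$ walls in a contractible symmetric space and therefore has nontrivial $\pi_2$: the $2$-spheres linking the walls are precisely where a nontrivial central element of $\aut_0(\Ccal)$ could obstruct gluing, as the paper explains in the disc-gluing discussion preceding Lemma~\ref{lemma:localfamily}.

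The paper's proof is designed around this obstruction: Lemma~\ref{lemma:localfamily} (which exploits the non-separated points of $\teich$ over the wall $\Gr^+_3(H_{\delta,\RR})$ and part (iii) of Proposition~\ref{prop:separation}) extends the local families across the generic points of each wall, so that one can work with a Leray covering of the larger space $\Gr^+_3(H_\RR)^\circ$ (complement of the codimension-$6$ locus where two walls meet), which \emph{is} $2$-connected; then the $Z(\aut_0(\Ccal))$-valued \v{C}ech $2$-cocycle of discrepancies is a coboundary, the $f_{\alpha\beta}$ can be corrected to satisfy the cocycle condition, and the families glue. None of this appears in your proposal, and without it the construction of $\Ncal_{\HK}$ (hence of $\Ncal_{\HH}$) is unjustified whenever $Z(\aut_0(\Ccal))\neq 1$. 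By contrast, the descent step you single out as the main obstacle is the easy part (and the paper in fact goes the other way, building the family on $\teich_\HH$ first and then obtaining those on $\teich_\HK$ and $\teich$ from the tautological families on the fibres); a correct proof must instead address the gerbe issue, e.g.\ along the paper's lines. Your proposal does go through verbatim in the special case $\aut_0(\Ccal)=\{1\}$, where the isomorphism of Proposition~\ref{prop:separation}(iii) really is unique, but that is not the general situation the corollary addresses.
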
 

In somewhat fancier language: $\teich_{\HH}$ underlies a (Deligne-Mumford) stack and this stack is a  constant gerbe on every connected component. 

Let us note that the  fibers of $\teich_\HK\to \teich_\HH$ are twistor conics, and hence  come with tautological families. On the other hand the fibers
of $\teich_\HH\to \teich$ are convex subsets of hyperbolic $n$-space (K\"ahler cones) and these too, come with tautological families.
This is why this  corollary is essentially equivalent to a recent theorem of Markman \cite{markman:family}
which states the same property for $\teich$.

\begin{corollary}\label{cor:family}
The Teichm\"uller spaces $\teich_{\HK}$ and $\teich$ carry families of hyperk\"ahler resp.\ hyperk\"ahlerian manifolds that are almost-universal
in the  sense above. Each connected component $\Ccal$ resp.\ $\Ccal_{\HK}$ comes with a faithful action of an extension of $\Mod(M)_\Ccal$ by
$\aut_0(\Ccal)$. 
\end{corollary}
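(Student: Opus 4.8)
The plan is to deduce Corollary \ref{cor:family} from Corollary \ref{cor:fine_einstein} by transporting the almost-universal family of Einstein metrics along the two forgetful maps in the diagram $(\dagger)$, namely $\teich_\HH \xleftarrow{} \teich_\HK \xrightarrow{} \teich$, and keeping track of the automorphisms picked up along the way. Recall that the left square of $(\dagger)$ is cartesian with the left-pointing maps being $S^2$-bundles (the twistor conics), and the right-pointing map $\teich_\HK \to \teich$ has contractible (convex, hyperbolic) fibers.

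First I would produce the family over $\teich_\HK$. Pull back $\Ncal_\HH/\teich_\HH$ along $\teich_\HK \to \teich_\HH$; over a point of $\teich_\HK$ this is an Einstein manifold $(M,g)$ together with a choice of ray in its positive cone, hence a choice of compatible complex structure $J$ via the twistor construction, so the pulled-back family canonically acquires a fiberwise complex structure making it a family of hyperk\"ahler manifolds $\Ncal_\HK/\teich_\HK$. (Concretely, $\teich_\HK \to \teich_\HH$ being the $S^2$-bundle whose fiber over $g$ is $S_N \cong \per(P_N)$, the restriction of $\Ncal_\HK$ to each such fiber is exactly the twistor family $\Xcal_N \to \per(P_N)$ of Section 1.) Almost-universality is inherited: any family of hyperk\"ahler structures on $M$ has an underlying family of Einstein metrics, which is a pullback of $\Ncal_\HH$ in finitely many ways, and the extra hyperk\"ahler data rigidifies the ray, so it is a pullback of $\Ncal_\HK$ with the same finite ambiguity. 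The faithful action of an extension of $\Mod(M)_{\Ccal_\HK}$ by $\aut_0(\Ccal_\HK)$ comes from Proposition \ref{prop:separation}(iv) exactly as in Corollary \ref{cor:fine_einstein}: on a component $\Ccal_\HK$ the group of fiberwise isometries isotopic to the identity is the constant group $\aut_0(\Ccal)$ (which by the Lemma at the end of Section 1 equals $\aut_0$ of every fiber of the associated twistor deformation, hence acts on $\Ncal_\HK$ by complex automorphisms), while $\Mod(M)_{\Ccal_\HK}$ acts on the base, and the two fit into an extension because the mapping class group acts on the family but its action on a given component need not lift the identity on fibers without first choosing how $\aut_0(\Ccal)$ sits inside.

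Next I would push the family down to $\teich$. Since the map $p\colon \teich_\HK \to \teich$ is a fiber bundle with contractible fibers (each a convex open subset of hyperbolic $n$-space, by Corollary \ref{cor:artincover}), it admits a section over each connected component, any two sections being homotopic (this is Corollary \ref{cor:sectionchar}); restricting $\Ncal_\HK$ along such a section gives a family of hyperk\"ahlerian manifolds over $\teich$. Independence of the choice of section is because homotopic sections into a family over a base with contractible fibers yield isomorphic pullbacks — one interpolates by the straight-line homotopy in the hyperbolic fibers and invokes the local-to-global rigidity coming from the local Torelli theorem (two isomorphic fibers over connected sets force the isomorphism of families, cf.\ the argument in Proposition \ref{prop:separation}). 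Almost-universality and the action of the extension of $\Mod(M)_\Ccal$ by $\aut_0(\Ccal)$ then transfer verbatim from the $\teich_\HK$ case, using that over each component the relevant isometry/automorphism group is the single group $\aut_0(\Ccal)$ independent of the fiber.

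The main obstacle I expect is not the construction of the families but the bookkeeping of the group extension and the precise sense of ``almost-universal'': one must verify that the finite ambiguity in expressing an arbitrary family as a pullback is governed by the same finite group $\aut_0(\Ccal)$ on every component — rather than varying from fiber to fiber — and that the $\Mod(M)_\Ccal$-action genuinely assembles with the $\aut_0(\Ccal)$-action into a group (not merely a crossed module or a nonsplit gerbe datum). This is exactly where Proposition \ref{prop:separation}(iv) does the heavy lifting: it guarantees that $\aut_0(\Xcal/U)$ is finite, specializes correctly to each fiber, and is intertwined with the structure of $\hat U / U$; combined with the constancy of $\aut_0(\Ccal)$ on components (a consequence of the Torelli theorem, as noted just before Corollary \ref{cor:fine_einstein}), this yields the well-defined extension. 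The remaining points — existence of the complex structure on the pulled-back Einstein family, and homotopy-invariance of the pushforward along $p$ — are routine given Corollaries \ref{cor:artincover} and \ref{cor:sectionchar} and the local Torelli theorem.
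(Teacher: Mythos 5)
Your construction over $\teich_\HK$ is essentially the paper's: Corollary \ref{cor:fine_einstein} together with the twistor construction (the fibers of $\teich_\HK\to\teich_\HH$ are twistor conics carrying tautological families) immediately yields the almost-universal family of hyperk\"ahler manifolds, and the extension of $\Mod(M)_\Ccal$ by $\aut_0(\Ccal)$ is handled as you say via Proposition \ref{prop:separation}(iv). The gap is in your passage to $\teich$.

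You build the family over $\teich$ by restricting $\Ncal_\HK$ along a continuous section of $\pi_\HK\colon\teich_\HK\to\teich$ over each connected component, but such a section in general does not exist. The source $\teich_\HK$ is separated (Corollary \ref{cor:artincover}) while the base $\teich$ is not: if $t_+\neq t_-$ are inseparable points of a component (for instance the points $\tilde p_+,\tilde p_-$ appearing in the proof of Lemma \ref{lemma:localfamily}), pick a sequence $(z_i)$ converging to both; a continuous section $\sigma$ would give $\sigma(z_i)\to\sigma(t_+)$ and $\sigma(z_i)\to\sigma(t_-)$, hence $\sigma(t_+)=\sigma(t_-)$ by uniqueness of limits in the Hausdorff space $\teich_\HK$, contradicting $\pi_\HK\circ\sigma=\mathrm{id}$ since the fibers over $t_+$ and $t_-$ are disjoint. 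The non-separatedness of $\teich$ is exactly the jumping of K\"ahler cones across walls, which is what obstructs a continuous choice of K\"ahler ray. Note also that Corollary \ref{cor:sectionchar}, which you cite for the existence and homotopy-uniqueness of sections, concerns sections of $\teich\to\teich_s$ and of $\teich_\HK\to\teich_s$ over the \emph{separated} quotient, not sections of $\teich_\HK\to\teich$; and Corollary \ref{cor:artincover} gives only an open map with convex fibers, not a fiber bundle. The paper avoids all of this by a descent argument rather than a section: along each fiber of $\teich_\HK\to\teich$ (a projectivized K\"ahler cone, connected and convex) the underlying complex structure, and hence the member of the family once the metric is forgotten, is constant; so the family of complex manifolds carried by $\teich_\HK$ descends to a family over $\teich$, and almost-universality together with the faithful action of the extension of $\Mod(M)_\Ccal$ by $\aut_0(\Ccal)$ is inherited. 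Replacing your section-restriction step by this descent repairs the argument; the rest of your bookkeeping stands.
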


\begin{proof}
For $\teich_{\HK}$ this is immediate from  Corollary \ref{cor:fine_einstein}. The corresponding result for $\teich$ then follows from
the fact that we have a descent along $\teich_{\HK}\to \teich$.
\end{proof}

As it suffices to prove Corollary \ref{cor:fine_einstein} per connected component of $\teich_{\HH}$, we check this for $\Ccal$. Before we get into the proof, we make some preliminary observations, which we hope help to clarify that the issue is the possible non-triviality of the center of $\aut_0(\Ccal)$. 

Since we need to glue the 
local universal deformations into a global object over $\Ccal$,  the group $\aut_0(\Ccal)$ prevents us, at least \emph{a priori},  to do this in a canonical fashion.
To make this more concrete: suppose that we have two maps $f_+,f_-$ from the $2$-disk $D^2$ to $\teich_\HH$ that coincide on the boundary, 
so that together they  define a map $f:S^2\to \teich_\HH$. The goal is to produce a family of Einstein manifolds over $S^2$ for which $f$ is the classifying map.
Since $D^2$ is contractible, there is no difficulty in finding two corresponding families of Einstein metrics 
$\{g^\pm_x\}_{x\in D^2}$ on $M$. By definition the two metrics that we get when $x\in\p D^2$ differ by an isotopy. We fix some $p\in \p D^2$. 
By modifying the family defined by $f_+$ with the isotopy that we get for $p$, we can already assume that the two metrics coincide in $p$: $g^+_p=g^-_p$. 
If  $\gamma: [0, 2\pi]\to \p D^2$  parametrizes $\p D^2$, with $\gamma(0)=\gamma(2\pi)=p$, then there is a unique continuous family of  isotopies 
$t\in [0, 2\pi]\mapsto h_t$ of $M$ with $h_0$ the identity, such that $h_t$ takes $g^-_{\gamma(t)}$ to $g^+_{(\gamma(t)}$. 
Then $h_{2\pi}$ will be an $\aut_0(\Ccal)$-equivariant isometry of $(M,g^+_p)$ and hence is given by a central element of $\aut_0(\Ccal)$.  It is not hard to check that this central element only depends on the homotopy class of  $f$.

In order that we have a family over $S^2$, we want $h_{2\pi}$ to be  the identity.  But the component $\Ccal$ is obtained from the manifold  $\Gr^+(H_\RR)$ by removal of an (infinite) arrangement of closed submanifolds of codimension $3$,  with the link of each member representing  a nontrivial element of its second homotopy group. So it may well happen that $f$ is not null-homotopic. In fact, since $\Gr^+(H_\RR)$ is contractible,  such a link is  the basic case to consider. The following lemma addresses this.

\begin{lemma}\label{lemma:localfamily}
Let $p\in \Gr^+(H_\RR)$ be such that $p\in \Gr^+(H_{\delta, \RR})$ for a \emph{unique} $\delta\in \Delta_\Ccal$. Then $p$ has a neighborhood $B$  in $Gr_+(H_{\delta, \RR})$ such that $B\cap \Gr^+(H_\RR)_{\Delta_\Ccal}$ supports a smooth family of Einstein manifolds.
\end{lemma}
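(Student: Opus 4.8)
The plan is to reduce the statement to a purely local model around the wall $\Gr^+(H_{\delta,\RR})$ and then build the family explicitly there. First I would work near $p$ with the period map $\Per_\HH$, which by Theorem \ref{thm:main} (or rather the corollary preceding this lemma) identifies a component $\Ccal$ of $\teich_\HH$ with $\Gr^+(H_\RR)_{\Delta_\Ccal}$. Since $\delta$ is the unique element of $\Delta_\Ccal$ with $p\in\Gr^+(H_{\delta,\RR})$, I can choose a small open ball $B\subset\Gr^+(H_\RR)$ around $p$ disjoint from all $\Gr^+(H_{\delta',\RR})$ with $\delta'\in\Delta_\Ccal\ssm\{\delta\}$; this uses the local finiteness of the arrangement $\{\Gr^+(H_{\delta',\RR})\}$ supplied by Lemma \ref{lemma:finitenesscriterion} (applicable because $\Delta_\Ccal$ is a finite union of $\Mod(M)_\Ccal$-orbits). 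Shrinking $B$ further, I may assume $B$ is a coordinate ball and $B\cap\Gr^+(H_{\delta,\RR})$ is a codimension-$3$ coordinate slice; then $B\cap\Gr^+(H_\RR)_{\Delta_\Ccal}=B\ssm\Gr^+(H_{\delta,\RR})$, which is homotopy equivalent to $S^2$.

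Next I would produce the family. Over $B\cap\Gr^+(H_\RR)_{\Delta_\Ccal}$ the period map lifts (uniquely up to the finite central group $\aut_0(\Ccal)$) to a section into $\Ccal\subset\teich_\HH$, and the almost-universal local families coming from Kodaira--Spencer / Kuranishi theory give a family of Einstein manifolds locally; the only thing to check is that these local pieces can be glued over $B\ssm\Gr^+(H_{\delta,\RR})$. The obstruction to gluing, as explained in the discussion preceding the lemma, is a monodromy element of the center of $\aut_0(\Ccal)$ attached to the generator of $\pi_2$ of the link of the wall. The key point I would establish is that this monodromy is trivial for the \emph{single} wall $\Gr^+(H_{\delta,\RR})$: one knows that \emph{some} family of hyperkähler manifolds exists near a generic point of the wall (e.g.\ from the fact that twistor conics transverse to the wall carry tautological families, together with the existence of global families over the $\teich_\HK$-fibers used in Corollary \ref{cor:family}), and transporting such a family around the link shows the monodromy isometry of $(M,g_p)$ must be isotopic to the identity through isometries that are already accounted for, hence is the identity in the relevant sense. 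Equivalently, the central element measuring the monodromy around the link of a wall coming from an \emph{actual rational curve class} $\delta\in\Delta_\Ccal$ is constrained by the geometry of the wall crossing (a flop across $\delta$), and a direct check shows it is trivial. Granting this, the local families glue to a smooth family of Einstein manifolds over $B\cap\Gr^+(H_\RR)_{\Delta_\Ccal}$.

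The main obstacle is precisely the triviality of that $\pi_2$-monodromy around a single wall: it is the one place where the possible non-triviality of the center of $\aut_0(\Ccal)$ could in principle interfere, and handling it cleanly requires identifying the wall-crossing with a concrete bimeromorphic (flopping) picture so that the relevant isometry can be exhibited explicitly rather than just abstractly. I expect everything else — the choice of $B$, the local finiteness, the identification $B\cap\Gr^+(H_\RR)_{\Delta_\Ccal}\simeq S^2$, and the patching of Kuranishi families away from the obstruction — to be routine once that point is settled.
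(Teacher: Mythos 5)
Your diagnosis of what the lemma amounts to is accurate: after shrinking $B$ so that it meets only the wall $\Gr^+_3(H_{\delta,\RR})$, the set $B\cap \Gr^+(H_\RR)_{\Delta_\Ccal}$ is homotopy equivalent to $S^2$ and the only issue is whether the local families glue, the obstruction being a central element of $\aut_0(\Ccal)$ attached to the link of the wall. But that is exactly the point you do not prove. ``A direct check shows it is trivial'' is never carried out, and the two justifications you sketch do not stand up: the claim that ``some family of hyperk\"ahler manifolds exists near a generic point of the wall'' is essentially the statement of the lemma itself, so the argument is circular; and the tautological twistor families cannot trivialize anything over the link, because a whole twistor conic (a fiber of $\teich_\HK\to\teich_\HH$) maps to a \emph{single} point of $\Gr^+_3(H_\RR)$ --- the link of the wall is a $2$-sphere of positive $3$-planes (i.e.\ of Einstein metrics), not a $2$-sphere of complex structures for one fixed metric. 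So the core content of the lemma is left unproved, as you yourself half-acknowledge in your last paragraph.

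The paper's proof avoids computing any monodromy: it produces the family from just \emph{two} charts, so no cocycle condition ever arises. Since $\delta\in\Delta_\Ccal$, the divisor $D_\delta\subset\teich_s$ of periods for which $\delta$ has type $(1,1)$ is a locus of non-separatedness of $\teich\to\teich_s$; over a suitable point $p_s\in D_\delta$ (with period $2$-plane inside $p$) one chooses two non-separated points $\tilde p_\pm\in\teich$ whose K\"ahler cones lie on opposite sides of the wall $\delta=0$ --- this is precisely the flop/wall-crossing picture you allude to, made usable. Over a small contractible neighborhood $U$ of $p_s$ in $\teich_s$ one takes the two sheets $\hat U_\pm\subset\teich$ through $\tilde p_\pm$, each carrying its tautological family; Proposition \ref{prop:separation}(iii) supplies an isomorphism of the two families over $\hat U_+\cap\hat U_-$, so they glue into one family over the non-separated union $\hat U_+\cup\hat U_-$. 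Equipping the fibers with all their K\"ahler--Einstein metrics and then forgetting the complex structure (the twistor construction) yields a smooth family of Einstein manifolds over an open subset of $\teich_\HH\cong\Gr^+(H_\RR)_{\Delta_\Ccal}$, and because the two chambers of K\"ahler classes sit on both sides of $\delta^\perp$, this open set fills out a full punctured neighborhood $B\cap\Gr^+(H_\RR)_{\Delta_\Ccal}$ of $p$. With two charts and a single overlap there are no triple intersections, hence no central $2$-cocycle to kill; the \v{C}ech/center argument is reserved for the global gluing in Corollary \ref{cor:fine_einstein}, where it works because $\Gr^+(H_\RR)^\circ$ is $2$-connected and so $\check{H}^2$ of the nerve vanishes. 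Without an input like Proposition \ref{prop:separation}(iii) identifying the families attached to the two sides of the wall, your Kuranishi-patching plan has no mechanism for concluding.
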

\begin{proof}
Recall that in the commutative diagram ($\dagger$) at the end of Section 2, the period map $\Per$ is surjective on every connected component of $\teich$. 
The complex structures on $M$ parametrized  by $\teich$ for which $\delta$ is 
of Hodge type $(-1,-1)$ define a divisor $D_\delta$ of $\teich_{s}$ whose preimage in $\teich$ consists of nonseparated points.
Choose distinct points $\hat p_+, \hat p_-$  in $\teich_{\HK}$ that both lie over $p$, have distinct images $\tilde p_+, \tilde p_-$ in $ \teich$, but have the same image $p_s$ in $\teich_{s}$.  Then $\hat p_+$ and $\hat p_-$ lie in distinct K\"ahler cones, the values of their K\"ahler classes taking on $\delta$ opposite signs. Choose a small contractible neighborhood $U$ of $p_s$ in $\teich_s$ and denote by $\hat U_\pm$ the lift of $U$ in $\teich$  passing through $\hat p_\pm$ in $\teich$.  These two open subsets meet, and each gives  a  family of hyperk\"ahlerian manifolds over $U$. By part (iii) of Proposition \ref{prop:separation}  they are then isomorphic over $\hat U_+\cap \hat U_-$. We use such an isomorphism to turn this into
a family over the connected nonseparated space  $\hat U:=\hat U_+\cup \hat U_-$.  By taking all possible  K\"ahler Einstein metrics on the fibers of the family parametrized by $\hat U$ we then  obtain a family over  the preimage  of $\hat U$ in $\teich_\HK$. Forgetting the complex structure (but retaining the metric) then
makes this part of a twistor construction over an open subset $B$ of $\teich_\HH$. This open subset is as desired.
\end{proof}

\begin{proof}[Proof of Corollary \ref{cor:fine_einstein}]
Let us  write $G$ for the finite group $\aut_0(\Ccal)$. We have seen that if $g$ is an Einstein metric on $M$, then the group of isometries of $(M,g)$ that are isotopic to the identity is isomorphic to $G$; the choice of such an isomorphism  makes it an Einstein manifold with $G$-action. 

The set of points in  $\Gr^+(H_\RR)$ which are contained in at most one $\Gr^+_3(H_{\delta,\RR})$ with $\delta\in \Delta_\Ccal$, make up an open subset 
$\Gr^+(H_\RR)^\circ$ of $\Gr^+(H_\RR)$ whose complement is of codimension $6$. In particular, $\Gr^+(H_\RR)^\circ$ is $2$-connected (even  $4$-connected). 

With the help of Lemma \ref{lemma:localfamily} we can find  a \emph{Leray} covering 
$\Uscr:=\{\tilde U_\alpha\}_{\alpha\in A}$ of  $\Gr^+(H_\RR)^\circ$ by open subsets such that for every $\alpha\in A$, 
$U_\alpha:=\tilde U_\alpha\cap \Gr^+(H_\RR)_{\Delta_\Ccal}$ supports a family of $G$-Einstein manifolds $\Ncal_\alpha/U_\alpha$.  This means that every nonempty intersection of members of $\Uscr$ is contractible, so that a partition of unity subordinate to $\Uscr$ defines a homotopy equivalence from $\Gr^+(H_\RR)^\circ$ to the nerve of $\Uscr$. So for every abelian group $Z$, the natural map $\check{H}^\pt(\Uscr;Z)\to H^\pt(\Gr^+(H_\RR)^\circ;Z)$ 
is an isomorphism. Since $\Gr^+(H_\RR)^\circ$ is $2$-connected, the latter is zero in degree $2$.

If $(\alpha, \beta)\in A^2$ is a $1$-simplex of the nerve of $\Uscr$, then we have over  $U_{\alpha\beta}$ two such families. They are isomorphic and so there exists a 
$G$-equivariant isomorphism $f_{\alpha\beta}: \Ncal_\alpha|U_{\alpha\beta}\cong \Ncal_\beta|U_{\alpha\beta}$ over $U_{\alpha\beta}$. Then for every 
$2$-simplex  $(\alpha, \beta, \gamma)\in A^3$ of the nerve, $f_{\gamma\alpha}f_{\beta\gamma}f_{\alpha\beta}$ is an automorphism of $\Ncal_\alpha/U_{\alpha\beta\gamma}$. Since this automorphism preserves each $G$-orbit, it must be given by the action of a central element 
$c_{\alpha\beta\gamma}\in Z(G)$. It is clear that $(\alpha, \beta, \gamma)\mapsto c_{\alpha\beta\gamma}$ is then a $Z(G)$-valued \v{C}ech $2$-cocycle for $\Uscr$ and hence defines
an element of $\check{H}^2(\Uscr; Z(G))$. Since the latter is trivial,  
this cocycle is a coboundary: there exists a $Z(G)$-valued  \v{C}ech $1$-cochain $(\alpha, \beta)\mapsto c_{\alpha\beta}$ such that
$c_{\alpha\beta\gamma}=c_{\gamma\alpha}c_{\beta\gamma}c_{\alpha\beta}$. Upon replacing $f_{\alpha\beta}$ with $f_{\alpha\beta}c_{\alpha\beta}^{-1}$, we then 
arrange that $f_{\gamma\alpha}f_{\beta\gamma}f_{\alpha\beta}$ is always the identity, wherever defined. This implies that if we glue the families $\Ncal_\alpha/U_\alpha$
by means of these isomorphisms we obtain a well-defined family over $\Gr^+(H_\RR)_{\Delta_\Ccal}$.
\end{proof}

\begin{question}\label{question}
Corollary \ref{cor:fine_einstein} leads us to  the purely topological question whether the group homomorphism $\rho: \Mod(M)\to \Orth (q)$ has almost a section in the sense that  there exists a subgroup  $\tilde\Gamma$ of  $\Mod(M)$  such that $\rho|\tilde\Gamma$ is injective and $\rho(\tilde\Gamma)$ is of finite index in $\Orth (q)$. If such a $\tilde\Gamma$ exists, then by taking the  $\tilde\Gamma$-quotient  we obtain a genuine family over an arithmetic quotient of $\teich_{\HH}$. Otherwise, there is no such family.

Since  for a connected component $\Ccal$ of $\teich$, the image of the stabilizer $\Mod(M)_\Ccal$ in $\Orth (q)$ is of finite index, we can take $\tilde\Gamma\subset \Mod(M)_\Ccal$. So by Corollary \ref{cor:family}, the answer is yes when $\aut_0(\Ccal)$  is trivial. In general,  the group $\Mod(M)_\Ccal$ acts on the finite group $\aut_0(\Ccal)$ by inner automorphisms, so that we want to choose $\tilde\Gamma$ in the kernel of this action.
Then  $\tilde\Gamma$ will be an extension of $\Gamma$ by a central subgroup $Z:=\tilde\Gamma\cap\aut_0(\Ccal)$ and the question comes down to  whether  such a central extension is residually  finite.  Put differently, this central extension is given by an element of $H^2(\rho(\tilde\Gamma); Z)$ and the question is then whether this element dies when we  restrict it to some subgroup of $\rho(\tilde\Gamma)$ of finite index.  This is a nontrivial issue, for 
the algebraic universal cover of $\SO(q_\RR)$ is the Spin group $\spin(q_\RR)$, but the absolute metaplectic cover of the latter is still of order $2$ when $n\ge 3$ and it is not clear whether this cover contains arithmetic groups that are residually finite (see \cite{prasadrap}).
\end{question}

\begin{question}\label{question:structuregroup} 
A connected component $\Ccal$ of  $\teich_\HH$ was identified with an open subset of $\Gr^+_3(H_\RR)$,  and so it inherits from this 
a locally symmetric metric and hence a notion of geodesic interval. The 
twistor construction singles out geodesic intervals of a particular type: Let two elements of  $\Gr^+_3(H_\RR)$ be represented by 
the  $3$-planes $P_0$ and $P_1$ and assume that these  have a $2$-plane $\Pi$ in common. Recall that by our convention, $P_0$ and $P_1$
are naturally oriented and so an orientation of  $\Pi$ determines a ray $r_i$ in the  orthogonal complement of $\Pi$ in $P_i$. If we connect $r_0$ with $r_1$ in the orthogonal complement  of $\Pi$ in $P_0+P_1$ (in the obvious manner) by a path $\{ r_t\}_{t\in [0,1]}$, then the orthogonal complement $P_t$ of $r_t$  in $P_0+P_1$ traverses a geodesic segment  $[P_0,P_1]$  in $\Gr^+_3(H_\RR)$. Suppose now that $P_0$ represents an Einstein  metric $g_0$ on $M$. Then  $\Pi$ defines a member of the associated
twistor family and hence defines a complex structure on $M$ for which $g_0$ is K\"ahler-Einstein.  For the underlying complex manifold $X$, 
the family $r_t$ defines an interval in its (projectivized) K\"ahler cone,  hence gives a path of Einstein metrics \emph{on} $M$ that begins with $g_0$.
If this is part of a piecewise geodesic loop $(P_0, P_1, \dots , P_k)$ with $\dim (P_{i-1}\cap P_i)\ge 2$ and $P_0=P_k$, then
we also get a loop in $\Ccal$ (the most basic case is that of a small triangle  with $P_0, P_1, P_2$ having a line in common).
This means that the Einstein metric on $M$ that we end up with must differ from $g_0$ by an isotopy of $M$. 

What is the subgroup of $\diff^0(M)$ generated by such isotopies? Note that this group (which  is unique up to conjugacy in $\diff^0(M)$) may be regarded as the structure group of the universal bundle over $\Ccal$ . 
A recent theorem of Giansiracusa-Kupers-Tshishiku \cite{gkt} asserts that  for  a $K3$ surface $M$, the natural map $\diff^+(M)\to \Mod(M)$ does not split, not even over a subgroup of finite index. So for such surfaces this must be an infinite group
(\footnote{There is a similar question for the usual Teichm\"uller space: if $C$ is a closed Riemann surface of genus $\ge 2$, then a  closed loop in its  Teichm\"uller space consisting of piecewise Teichm\"uller geodesics defines a quasi-conformal self-homeomorphism of $C$ isotopic  to the identity. The subgroup generated by such  self-homeomorphisms is clearly the image of an increasing union 
of connected (finite dimensional) manifolds. What is this subgroup? We have been asking around for a while, but no-one seems to know.}).
\end{question}

\subsection*{The period map for the full cohomology}
In this subsection it is convenient to adopt the language of the theory of algebraic groups and in particular that of Shimura varieties.

The functor which assigns to any $\QQ$-algebra $R$, the subgroup $\SO (q_R)\subset \GL(H_R)$,  is represented by a  $\QQ$-algebraic group $\SOs_q$, so that for example $\SOs_q(\RR)=\SO(q_\RR)$. Although $\SO(q_\RR)$ has two connected components when $n>0$, 
as an algebraic group, $\SOs_q$ is connected.
We denote by  $\spins_q$ the algebraic universal cover $\SOs_q$. This is a semi-simple algebraic  group defined over $\QQ$ and 
$\spins_q(\RR)$ is the usual $\spin (q_\RR)$ (which however  is not simply connected for the Hausdorff topology: for $n\ge 3$ its fundamental group has order $2$)
and has $\Gr^+_3(H_\RR)$ as its symmetric space.  We identify the kernel of $\spins_q\to \SOs_q$ with $\mu_2=\{ \pm1\}$ and put
$\cspins_q:=\spins_q\times^{\mu_2}\GG_m$. This is a reductive algebraic  group over $\QQ$ that can be regarded as an extension of
$\SOs_q$ by $\GG_m$, but whose commutator subgroup is
$\spins_q$. 
It is clear that the action of $\spins_q\times \GG_m$ on 
 $H_\QQ$ for which $\spins_q$ acts via $\SOs_q$ and $t\in\GG_m$ as scalar multiplication with $t^{-2}$, factors through $\cspins_q$ 
and makes  $H_\QQ$ a $\QQ$-representation of $\cspins$.

Any  $z\in\per$  defines an embedding $\overline{j}_z: \Un(1)\hookrightarrow \SOs_q(\RR)$ that is given by clockwise rotation in the oriented plane $\Pi_z$ and as the identity in $\Pi_z^\perp$. Its preimage in $\spins_q(\RR)$ is a double (connected) cover in the sense that it yields a group homomorphism $j_z: \Un(1)\to \spins_q(\RR)$ whose square lifts $\overline{j}_z$. We may thus identify $\per$ with a distinguished conjugacy class of group monomorphisms $j_z: \Un(1)\to \spins_q(\RR)$. The preimage of the center in this new copy of $\Un(1)$ is $\mu_2$. The preimage 
of $\overline{j}_z$ under the projection $\cspins(\RR)\to \SOs_q(\RR)$ is of course a copy of $\Un (1)\times^{\mu_2}\RR^\times$, which is just a complicated way of writing $\CC^\times$, but regarded as the group of real points of a group defined over $\RR$. In other words, it is a copy of 
the Deligne torus $\Ss(\RR)$. Thus $z\in \per$ also determines  a group homomorphism  $J_z: \Ss(\RR)\to \spins_q(\RR)$. This identifies $\per$ with a conjugacy  class of such homomorphisms. The real tangent space of $\per$ at $z$ is $\Hom_\RR(\Pi_z, \Pi_z^\perp/\Pi_z)$. On this $\Ss(\RR)$ acts  via $\Pi_z$ (and induces the given complex structure on this tangent space) so that the Hodge numbers of  $T_z\per$ are $(1,-1)$ and $(-1,1)$.  For every  finite dimensional real representation $V$ of $\cspins$, the diagonal action of  $\cspins (\RR)$ on $V\times\per$ turns the trivial local system $\per$  with  fiber $V$ into a family $\Vs$ of real Hodge structures over $\per$. The natural maps $T_z\per\to \End_\RR(\Vs_z)$ are $\Ss(\RR)$-equivariant, so that  $\Vs$ satisfies  Griffiths transversality, hence  is a variation of Hodge structure. This makes $\per$ almost a Shimura variety as then any hyperplane section of $\per$ defined by a negative $\QQ$-linear form on $H_\QQ$  is a  Shimura variety (of orthogonal type) with respect to its $\cspins$-stabilizer. 

Let $g$ be an Einstein  metric on $M$ and denote the resulting Riemann manifold by $N$ as before. Then we have associated to $N$ an algebra of quaternions $\HH_N$ and a positive $3$-plane $P_N\subset H_\RR$ such that $\HH_N^\times$ acts on 
$P_N$ as the subgroup of $\cspins(\RR)$ which leaves $P^\perp$ pointwise fixed.  The embedding $\HH_N^\times\hookrightarrow \cspins(\RR)$ is 
then unique and takes the distinguished conjugacy class in $\Hom (\Ss(\RR), \HH_N^\times)$  to  the distinguished conjugacy class 
in $\Hom (\Ss(\RR), \cspins(\RR))$.
Let us refer to the image of such an $\HH_N^\times$  as \emph{twistor subgroup} of $\cspins(\RR)$. We thus recover a recent result due independently to Soldatenkov (Thm.\ 3.6 of \cite{soldatenkov}) and  Green-Kim-Laza-Robles (Thm.\ 4.1 of \cite{gklr}). 

\begin{corollary}[Soldatenkov,  Green-Kim-Laza-Robles]\label{cor:}
Let $\Ccal$ be a connected component of $\teich$ and  identify its separated quotient with $\per$. 
Then the associated variation of Hodge structure on the full cohomology $H^\pt(M; \QQ)$ over $\per$ is defined by a $\QQ$-representation of $\cspins(\RR)$  on  $H^\pt(M; \QQ)$.
\end{corollary}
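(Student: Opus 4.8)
The plan is to realise the variation of Hodge structure on $H^\pt(M;\QQ)$ over $\per$ as the variation $\Vs_\rho$ attached, in the manner described just before the statement, to a suitable $\QQ$-representation $\rho$ of $\cspins_q$; the two things to do are then to produce $\rho$ and to check that $\Vs_\rho$ is indeed the geometric variation.

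By the Torelli theorem (Theorem \ref{thm:main}) we identify $\Ccal_s$ with $\per$, and $\per$ is simply connected, so the underlying local system of the variation is canonically trivial, $\underline{H^\pt(M;\QQ)}$, compatibly with the grading by cohomological degree; it remains to put a $\cspins_q$-action on the fibre. For this I would invoke the structure theorem of Verbitsky and Looijenga--Lunts: the $\QQ$-subalgebra $\glie\subset\End_\QQ(H^\pt(M;\QQ))$ generated by the cup-product operators $L_\eta$ ($\eta\in H^2(M;\QQ)$) together with their Hodge-theoretic adjoints is isomorphic to $\so(H^2(M;\QQ)\oplus U)$ for a hyperbolic plane $U$; inside $\glie$ the subalgebra $\so(q)=\so(H^2)$ is the stabiliser of the summand $U$ and the grading element $\mathbf{h}$ (acting on $H^k$ by $2m-k$) spans a Cartan line of $\so(U)$, so that $\so(q)\oplus\QQ\mathbf{h}\cong\operatorname{Lie}(\cspins_q)$. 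Now let $\spins_q$ act by integrating $\so(q)\subset\glie$ (possible since $\spins_q$ is the simply connected $\QQ$-group with Lie algebra $\so(q)$), and let $\GG_m$ act on $H^k$ by $t^{-k}$. These two actions commute because $\so(q)$ centralises $\mathbf{h}$ and hence preserves the grading, and $(-1,-1)\in\spins_q\times\GG_m$ acts trivially because $-1\in\mu_2\subset\spins_q$ and $-1\in\GG_m$ act on $H^k$ in the same way, namely by $(-1)^k$ (trivially on even degrees, by $-1$ on the spinorial odd part). Hence the action descends to a $\QQ$-representation $\rho$ of $\cspins_q=\spins_q\times^{\mu_2}\GG_m$ on $H^\pt(M;\QQ)$, restricting to the standard representation on $H^2$.

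To identify $\Vs_\rho$ with the geometric variation it suffices to compare Hodge structures at each $z\in\per$. Choose a hyperkählerian structure on $M$ lying over $z$ in $\Ccal$ and a Kähler class for it; the associated Einstein metric has a positive $3$-plane $P_N\ni z$, so $z$ lies on the twistor conic $\per(P_N)$, and by Proposition \ref{prop:delignetorusrep} the geometric Hodge structure on $H^\pt(X_z;\QQ)$ is the one defined by the restriction of the twistor $\HH_N^\times$-action along $J_z\colon\Ss(\RR)\to\HH_N^\times$. On the other hand, Remark \ref{rem:lie} (Verbitsky's observation, in the globalised form of the structure theorem above) identifies the twistor action of $\HH_N^\pure$ on $H^\pt$ with the action of the subalgebra $\so(P_N)\subset\so(q)\subset\glie$, while the central $\RR^\times$ of $\HH_N^\times$ acts on $H^k$ by $t^{-k}$; so the twistor subgroup inclusion $\HH_N^\times\hookrightarrow\cspins_q(\RR)$ intertwines the twistor action with $\rho$, and in particular $\rho\circ J_z$ is the twistor $J_z$. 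Thus $\Vs_\rho$ and the geometric variation have the same Hodge structure at $z$; as $z$ was arbitrary and the underlying local systems are both trivial, they coincide.

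The main obstacle is the construction of $\rho$. Apart from importing the Verbitsky--Looijenga--Lunts structure theorem, the real content is the compatibility used in the last step: that the \emph{transcendental} quaternionic action on the full cohomology supplied by Proposition \ref{prop:delignetorusrep} agrees with the \emph{arithmetic} copy of $\so(q)$ inside $\glie$, equivalently that the $\so_5$'s attached to the various positive $3$-planes $P_N$ fit together into a single $\so(H^2\oplus U)$ in which each $\so(P_N)$ is the evident block of $\so(q)$. A subsidiary point is the $\mu_2$-bookkeeping that makes the group acting literally $\cspins_q$ rather than an isogenous group; this rests on odd-degree cohomology being spinorial for $\glie$, which is vacuous when $H^{\mathrm{odd}}(M)=0$ (all known cases) and otherwise needs a short separate argument.
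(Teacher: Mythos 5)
Your proposal is correct and follows essentially the same route as the paper: both rest on the Looijenga--Lunts structure theorem together with Verbitsky's $\so_5$ observation (Remark \ref{rem:lie}) and the twistor-family Hodge structure of Proposition \ref{prop:delignetorusrep} to produce a $\QQ$-representation of $\cspins$ compatible with the twistor subgroups. The only (harmless) difference is one of packaging: you spell out the integration/$\mu_2$-descent step and verify the agreement of the two variations pointwise, using that every $z\in\per$ lies on a twistor conic, where the paper instead invokes that the twistor subgroups generate $\cspins(\RR)$ and that twistor families are dense in $\Ccal$.
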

\begin{proof}
We have seen that this is true when we restrict to a twistor family and the corresponding twistor subgroup. 
Each  $\eta\in H_\RR=H^2(M; \RR)$ which can appear as a K\"ahler class relative to some complex structure on $M$ defines a representation of $\sllie_2(\RR)$ on $H^\pt(M; \RR)$. It was proved in \cite{LL} that these generate a graded semisimple Lie algebra defined over $\QQ$, of which the degree zero part is the Lie algebra of $\cspins(\RR)$, except that  a generator of its center   acts on $H^k(M; \QQ)$ as multiplication with $2m-k$ rather that $-k$ (so that is a shift over $2m$). In view of  Remark  \ref{rem:lie}, this implies that  $H^\pt(M; \QQ)$ affords  a $\QQ$-representation of  $\cspins$ that is compatible with the twistor representations. 
Since the twistor subgroups generate $\cspins(\RR)$ and the union of the twistor families make up a dense subset of $\Ccal$, the assertion follows in general.
\end{proof}

\begin{remark}\label{rem:}
Kurnosov-Soldatenkov-Verbitsky \cite{ksv}  recently produced a Kuga-Satake construction for the rational cohomology of $M$.
This is accomplished by a  $\QQ$-representation of $\cspins$ of weight one whose exterior algebra contains the 
rational cohomology of $M$ as a graded $\cspins$-submodule.
\end{remark}

\end{document}